\documentclass[11pt,reqno]{amsart}
\usepackage{amsfonts,amssymb,amscd,amsmath,enumerate,verbatim,calc,graphicx}
\usepackage{amsmath}
\usepackage{amssymb}
\usepackage{amscd}
\usepackage{wrapfig}
\usepackage{float}
\usepackage{color}
\usepackage{mathtools}
\usepackage[colorlinks,pagebackref=true]{hyperref}
\makeatletter

% \@addtoreset{equation}{section}
% \def\theequation{\thesection.\@arabic \c@equation}
\makeatletter
\def\widebreve#1{\mathop{\vbox{\m@th\ialign{##\crcr\noalign{\kern3\p@}%
				\brevefill\crcr\noalign{\kern3\p@\nointerlineskip}%
				$\hfil\displaystyle{#1}\hfil$\crcr}}}\limits}

\def\brevefill{$\m@th \setbox\z@\hbox{$\braceld$}%
	\bracelu\leaders\vrule \@height\ht\z@ \@depth\z@\hfill\braceru$}
\makeatletter

\def\@citecolor{blue}
\def\@linkcolor{blue}
\def\@urlcolor{blue}
\def\@urlcolor{blue}

%------    GENERAL MACROS    -----
\def\NZQ{\mathbb}               % the font for N,Z,Q,R,C
\def\NN{{\NZQ N}}
\def\QQ{{\NZQ Q}}
\def\ZZ{{\NZQ Z}}

\def\mfp{\mathfrak p}
\def\mfq{\mathfrak q}

\def\pol{\operatorname{pol}}
\def\bight{\operatorname{bight}}

\def \mm{{\mathfrak m}}
\def\Ass{\operatorname{Ass}}
\def\svd{\operatorname{svd}}
\def\MinAss{\operatorname{MinAss}}
\def\height{\operatorname{height}}

\def\deg{\operatorname{degree}}

\def\reg{\operatorname{reg}}
%------------------------------------------------
% Direct and inverse limits
%
%\opn\dirlim{\underrightarrow{\lim}}
%\opn\inivlim{\underleftarrow{\lim}}
%
%
% Names with a meaning
%

\newtheorem{Theorem}{Theorem}[section]
\newtheorem{Lemma}[Theorem]{Lemma}
\newtheorem{Corollary}[Theorem]{Corollary}

\newtheorem{Remark}[Theorem]{Remark}

\newtheorem{Example}[Theorem]{Example}

\newtheorem{Question}[Theorem]{Question}
%
% We like the var forms of some greek letters (as taught in German schools)
%
\let\epsilon\varepsilon
\let\phi=\varphi
\let\kappa=\varkappa

\def \s {\sigma}

\def \om {\omega}

%
%           We print on A4 paper
%
\textwidth=15cm \textheight=22cm \topmargin=0.5cm
\oddsidemargin=0.5cm \evensidemargin=0.5cm \pagestyle{plain}
% ------    END OF GENERAL MACROS    -------
\begin{document}
	
	\title{$v$-numbers of symbolic power filtrations}
	
	\author{Vanmathi A}
	\address{Vanmathi A, 
		Department of Mathematics,
		Indian Institute of Technology, Palakkad, India}
	\email{vanmathianandarajan@gmail.com, 212404007@smail.iitpkd.ac.in}
	
	\author{Parangama Sarkar}
	\thanks{The second author was partially supported by SERB POWER Grant with Grant No. SPG/2021/002423.}
	\address{Parangama Sarkar,  Department of Mathematics,
		Indian Institute of Technology, Palakkad, India}
	\email{parangamasarkar@gmail.com, parangama@iitpkd.ac.in}
	\keywords{filtration, integral closure, symbolic powers, bipartite graph, cycle, complete graph, cover ideal}
	\subjclass[2000]{primary 13A15, 13A18, 13A02}
	
	\begin{abstract}
		We study the asymptotic behaviour of  $v$-number and local $v$-numbers of Noetherian generalized symbolic power filtrations $\mathcal I=\{I_n\}$ in a  Noetherian $\NN$-graded domain and show that they are quasi-linear type. We provide sufficient conditions for the existence of the limits $\lim\limits_{n\to\infty}\frac{v(I_n)}{n}$ and $\lim\limits_{n\to\infty}\frac{v_\mfp(I_n)}{n}$ for all $\mfp\in\overline A(\mathcal I)$.  We explicitly compute local $v$-numbers and $v$-numbers of symbolic powers of cover ideals of complete bipartite graphs, complete graphs, cycles, $K_m^s$ and compare them with their Castelnuovo-Mumford regularity. For every positive integer $p\geq 2$, we provide an example of an unmixed bipartite graph $\mathcal H_p$ that is not a complete multipartite graph and $v(J(\mathcal H_p))\geq\bight(I(\mathcal H_p))$. This answers a question in \cite[Question 3.12]{KSa}. We show that 
		for both connected bipartite graphs and  connected non-bipartite graphs, the difference between the regularity and the $v$-number of the cover ideals can be arbitrarily large. This strengthens and gives an alternative proof of \cite[Theorem 3.10]{KSa}. 
	\end{abstract}

	\maketitle
	\section{Introduction}
		Let $R$ be an $\NN$-graded Noetherian ring and $I$ be a homogeneous ideal of $R$. It is well-known that for any $\mfp\in\Ass(I)$, there exists a homogeneous element $f\in R$ such that $\mfp=(I:f)$. The invariant, $v$-number of $I$, defined as
		$$v(I): =\min\{u: \mbox{there exists } f\in R_u \mbox{ and }\mfp\in \Ass(I)\mbox{ such that }\mfp=(I:f)\},$$ was first introduced in \cite{CSTPV} to study the asymptotic behaviour of the minimum distance of projective Reed–Muller-type codes.
	For any $\mfp\in \Ass(I)$, the local $v$-number of $I$ at $\mfp$ is defined in the following way \cite{CSTPV},
	$$v_\mfp(I):=\min\{u: \mbox{there exists } f\in R_u \mbox{ such that }\mfp=(I:f) \}$$ and from the definition it follows that $v(I)=\min\{v_\mfp(I): \mfp\in\Ass(I)\}$. Therefore the local $v$-numbers are useful in computing the $v$-number explicitly. $v$-numbers are explored further for various homogeneous ideals and linked with the Castelnuovo-Mumford regularity of those ideals   (see \cite{GRV}, \cite{FS}, \cite{JV}, \cite{JVS},  \cite{KSa}, \cite{SS}, \cite{F23} and the references therein). Recently there has been growing interest in studying the asymptotic behaviour of these $v$-invariants (see \cite{FS}, \cite{Co}, \cite{GF}, \cite{BMS}, \cite{FSmarch} \cite{MRK}). In \cite{Br}, Brodmann proved that $\Ass(R/ {I^n})=\Ass(R/{I^{n+1}})$ for all $n\gg 0$. Let $\overline A(I)$ denote $\Ass(R/ {I^n})$ for all $n\gg 0$. In \cite{Co}, it is shown that for all $\mfp\in\overline A(I)$, the invariants $v_\mfp(I^n)$ are eventually linear polynomials in $n$ with integer coefficients and hence $v(I^n)$ is an eventually linear polynomial in $n$.

	The objective of this paper is to investigate the asymptotic behaviour of $v$-numbers and local $v$-numbers for filtrations $\mathcal I=\{I_n\}$ of ideals. The following examples show that, in general for filtrations, these invariants may not be linear polynomials in $n$ with integer coefficients for $n\gg 0$ and $\displaystyle\lim\limits_{n\to\infty}\frac{v_\mfp(I_n)}{n}$ may not exist. These two examples answer the question \cite[Question 5.2]{FSmarch}.
	\begin{Example}{\em	
		Consider the filtration $\mathcal I=\{I_n=(x^2,xy^{n^2})\}$ of homogeneous ideals in the polynomial ring $k[x,y]$ where $k$ is a field. Then $\mathcal I$ is a non-Noetherian filtration. Note that $\Ass(R/I_n)=\{\mm=(x,y),\mfp=(x)\}$ for all $n\geq 1$. For all $n\geq 1$, by  \cite[Lemma 1.2]{Co}, $$\displaystyle v_\mm({I_n})=\min\{w: \Big(\frac{I_{n}:\mm}{I_{n}}\Big)_w\neq 0\}=n^2=\min\{w: \Big(\frac{I_{n}:\mfp}{(I_{n}:\mfp)\cap(I_{n}:\mm^\infty)}\Big)_w\neq 0\}=v_\mfp({I_n}).$$
 Hence $v_\mm({I_n})$ and $v_\mfp({I_n})$ are not linear polynomials or quasi-linear polynomials for all $n\gg 0$ and $\displaystyle\lim\limits_{n\to\infty}\frac{v_\mm(I_n)}{n}$, $\displaystyle\lim\limits_{n\to\infty}\frac{v_\mfp(I_n)}{n}$ do not exist.}
	\end{Example}	
	\begin{Example}{\em	
		Consider the filtration $\mathcal I=\{I_n=(x^{\lceil{n\sqrt{2}}\rceil})\}$ of homogeneous ideals in the polynomial ring $k[x]$ where $k$ is a field. Then $\mathcal I$ is a non-Noetherian filtration. Note that $ \Ass(R/I_n)=\{\mm=(x)\}$ for all $n\geq 1$. For all $n\geq 1$, by  \cite[Lemma 1.2]{Co}, $$\displaystyle v_\mm({I_n})=\min\{w: \Big(\frac{I_{n}:\mm}{I_{n}}\Big)_w\neq 0\}=\lceil{n\sqrt{2}}\rceil-1.$$ Hence for all $n\gg 0$,  $v_\mm({I_n})$  is not a linear polynomial with integer leading term but $\displaystyle\lim\limits_{n\to\infty}\frac{v_\mm(I_n)}{n}=\sqrt{2}$.}
	\end{Example}	
In this paper, we primarily focus on integral closure filtrations of ideals and generalized symbolic power filtrations.  In Remark \ref{Noetherian}, we mention some instances where it is known that these filtrations are Noetherian filtrations. In Section \ref{def}, we discuss and recall the required notation and definitions. The main results in Section \ref{section3}  is summarized in the following theorem.
\begin{Theorem}{\em(Theorem \ref{integralclosure} and Theorem \ref{main}) } 
Let $R$ be an $\mathbb N$-graded Noetherian domain.
\begin{enumerate}
		\item[$(1)$] Let $I$ be a homogeneous ideal of $R$ generated by homogeneous elements $a_1,\ldots,a_l$ with $\deg a_i=d_i$ for all $1\leq i\leq l$. Consider the filtration $\mathcal I=\{I_n=\overline{I^n}\}$.  Suppose the $R$-algebra $\bigoplus_{n\in\mathbb N}\overline{I^n}$ is a finitely generated $R[I]$-module. Let $\mfp\in\overline A(\mathcal I)$. Then $v_\mfp(\overline{I^n})$ and $v({\overline{I^n}})$  are eventually linear functions in $n$ with leading coefficient from the set $\{d_1,\ldots,d_l\}$. 	
\item[$(2)$] Consider a generalized symbolic power filtration $\mathcal I=\{I_n=(I^n:L^\infty)\}$ where  $I$ and $L$  are two homogeneous ideals of $R$. Suppose $\mathcal I$ is a Noetherian filtration. Then the following hold.
	\begin{enumerate}
		\item[$(a)$] $v_\mfp({I_n})$, $v({I_n})$  are  quasi-linear type for all $\mfp\in\overline A(\mathcal I)$.
		\item[$(b)$] If one the following holds
		\begin{enumerate}
			\item  $R=K[x_1,\ldots,x_m]$ is a polynomial ring over a field $K$ and $I$ is an equigenerated monomial ideal,
			\item $I$ is an equigenerated homogeneous ideal and $R_0$ is a local ring with infinite residue field,
		\end{enumerate}
	then $ \displaystyle\lim\limits_{n\to\infty}\frac{v_\mfp(I_n)}{n}$ exist for all $\mfp\in\overline A(\mathcal I)$ and $\displaystyle\lim\limits_{n\to\infty}\frac{v(I_n)}{n}=\frac{\alpha(I_{\svd(\mathcal I)})}{\svd(\mathcal I)}$ where $\svd(\mathcal I)$ is the standard Veronese degree of $\mathcal I$.
		\item[$(c)$] Let $R=K[x_1,\ldots,x_m]$ be a polynomial ring over a field $K$, $J$ be a cover ideal of a finite simple graph $G$ with $m$ vertices and $\mathcal I=\{J^{(n)}\}$. Then  
		
		$\displaystyle\lim\limits_{n\to\infty}\frac{v(I_n)}{n}, \displaystyle\lim\limits_{n\to\infty}\frac{v_\mfp(I_n)}{n}$ exist for all $\mfp\in\overline A(\mathcal I)$ and $\displaystyle\lim\limits_{n\to\infty}\frac{v(I_n)}{n}=\frac{\alpha(J^{(2)})}{2}$.		
	\end{enumerate}	
\end{enumerate}	
\end{Theorem}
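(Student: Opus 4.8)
The common tool throughout is the description of the $v$-number as a minimal degree: by \cite[Lemma~1.2]{Co}, for a homogeneous ideal $J$ of a Noetherian $\NN$-graded ring and $\mfp\in\Ass(R/J)$, the number $v_\mfp(J)$ is the least degree $w$ in which a certain explicit $\ZZ$-graded subquotient $C_\mfp(J)$ of $R$, built only from $J$ and $\mfp$ (for $\mfp=\mm$ one may take $C_\mm(J)=(J:\mm)/J$), is nonzero, and $v(J)=\min_\mfp v_\mfp(J)$. The engine is the standard fact that if $S=\bigoplus_{n\ge 0}S_n$ is a finitely generated $\NN$-graded algebra over a Noetherian graded ring $R=S_0$ and $M=\bigoplus_n M_n$ a finitely generated $\ZZ$-graded $S$-module (each $M_n$ a graded $R$-module), then $n\mapsto\min\{w:(M_n)_w\ne 0\}$ is, for $n\gg 0$, a quasi-linear function of $n$, which collapses to a single linear function on each residue class once some Veronese $S^{(e)}$ is standard graded. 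For part $(1)$: the hypothesis that $\bigoplus_n\overline{I^{\,n}}$ is a finite $R[I]$-module gives $\overline{I^{\,n+1}}=I\,\overline{I^{\,n}}$ for $n\ge n_0$, hence $\overline{I^{\,n}}=I^{\,n-n_0}\,\overline{I^{\,n_0}}$ eventually; thus $\mathcal R(\mathcal I)=\bigoplus_n\overline{I^{\,n}}t^n$ is Noetherian and eventually standard in the Rees grading, and assembling $\bigoplus_n C_\mfp(\overline{I^{\,n}})$ into a finitely generated $\mathcal R(\mathcal I)$-module the engine gives $v_\mfp(\overline{I^{\,n}})$ eventually linear, with slope one of the $d_i$ since each step is multiplication by $I=(a_1,\dots,a_l)$. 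Taking the minimum over the finite set $\overline A(\mathcal I)$ yields the claim for $v(\overline{I^{\,n}})$.

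For part $(2)(a)$: $\mathcal I$ Noetherian means $\mathcal R(\mathcal I)$ is a finitely generated $R$-algebra, so there is $e$ with $\mathcal R(\mathcal I)^{(e)}=\bigoplus_n I_{ne}t^{ne}$ standard and each strand $\bigoplus_n I_{ne+i}t^{ne+i}$ $(0\le i<e)$ finite over it. Granting that $\bigoplus_n C_\mfp(I_n)$ is a finitely generated graded $\mathcal R(\mathcal I)$-module for each $\mfp\in\overline A(\mathcal I)$, the engine applied strand by strand shows $n\mapsto v_\mfp(I_{ne+i})$ is eventually linear, so $v_\mfp(I_n)$ and $v(I_n)=\min_{\mfp\in\overline A(\mathcal I)}v_\mfp(I_n)$ are quasi-linear type with period dividing $e$.

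For parts $(2)(b)$ and $(2)(c)$ we must promote ``quasi-linear type'' to a genuine limit. Lower bound: $\overline A(\mathcal I)$ is finite, so every $\mfp$ in it has a generator of degree at most a fixed constant $C$; if $(I_n:f)=\mfp$ with $f$ of minimal degree then $hf\in I_n\setminus\{0\}$ for such a generator $h$ (using that $R$ is a domain), whence $v_\mfp(I_n)\ge\alpha(I_n)-C$ and so $\liminf_n v(I_n)/n\ge\lim_n\alpha(I_n)/n$; the latter limit exists by the sub-additivity $\alpha(I_{m+n})\le\alpha(I_m)+\alpha(I_n)$ and, for $d=\svd(\mathcal I)$, equals $\alpha(I_d)/d$ since $\alpha(I_{nd})=\alpha((I_d)^n)=n\,\alpha(I_d)$ in a domain. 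Upper bound: on the strand $n\equiv 0\pmod d$ one has $I_{nd}=(I_d)^n$, and under hypothesis $(i)$ (resp.\ $(ii)$, using the infinite residue field) one argues that $I_d$ remains equigenerated, say in degree $\delta=\alpha(I_d)$, and then $\limsup_n v((I_d)^n)/n\le\delta$, because for an equigenerated ideal the $v$-numbers of the powers grow with slope equal to the generating degree (the equigenerated refinement of \cite{Co}; cf.\ \cite{FS}), witnessed concretely by socle-type elements of degree $n\delta+O(1)$; a parallel relative estimate handles the other residue classes, so $\limsup_n v(I_n)/n\le\alpha(I_d)/d$ and the limit exists, equal to $\alpha(I_{\svd(\mathcal I)})/\svd(\mathcal I)$. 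For $(2)(c)$: the symbolic Rees algebra of a cover ideal is Noetherian (Herzog--Hibi--Trung), so $(2)(a)$ and the lower bound apply with $R$ a polynomial ring and every associated prime monomial; the matching upper bound comes from an explicit monomial witness built from a minimum half-integral fractional vertex cover of $G$, and $\lim_n\alpha(J^{(n)})/n=\alpha(J^{(2)})/2$ because $\alpha(J^{(2n)})=n\,\alpha(J^{(2)})$ for cover ideals (half-integrality of the fractional vertex-cover polytope), giving $\lim_n v(J^{(n)})/n=\alpha(J^{(2)})/2$.

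The main obstacle is the finite generation of $\bigoplus_n C_\mfp(I_n)$ over $\mathcal R(\mathcal I)$: unlike the $I$-adic case of \cite{Co}, the quotients $R/I_n$ do not assemble into a finite $\mathcal R(\mathcal I)$-module, so one cannot quote that proof directly. The plan is to use the Noetherianness of $\mathcal R(\mathcal I)$ to realize $\bigoplus_n(I_n:\mfp)t^n$ as a conductor-type ideal $(\mathcal R(\mathcal I):_S\mfp S)$ inside a suitable finite overring $S$ of $\mathcal R(\mathcal I)$ in $R[t]$, together with a Brodmann-type stabilization of the primary components occurring along the filtration, and then to pass to the subquotient $C_\mfp$. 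A secondary difficulty is controlling $v((I_d)^n)$ in $(2)(b)$: if $I_d=(I^d:L^\infty)$ fails to be equigenerated one instead bounds it through the asymptotically linear behaviour of the minimal graded free resolutions of the powers $(I_d)^n$.
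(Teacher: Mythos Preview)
Your plan has the right architecture (Conca's lemma applied to a bigraded module assembled from the colon quotients), but the ``main obstacle'' you flag at the end is precisely the missing idea, and the paper resolves it by a simple device you did not find. Instead of trying to realize $\bigoplus_n C_\mfp(I_n)$ as a module over some overring, one works \emph{inside} the Noetherian ring $R[\mathcal I]=\bigoplus_n I_n$: with $J=\bigoplus_n I_{n+1}$, $T=\mfp R[\mathcal I]$ and $Q=Q_\mfp R[\mathcal I]$, the module $M=(J:_{R[\mathcal I]}T)/(J:_{R[\mathcal I]}(T+Q^\infty))$ is automatically finitely generated over $R[\mathcal I]$ (colons in a Noetherian ring), and its $n$-th piece is
\[
\frac{(I_{n+1}:\mfp)\cap I_n}{(I_{n+1}:(\mfp+Q_\mfp^\infty))\cap I_n}.
\]
The extra ``$\cap I_n$'' is then removed by proving the containment $(I_{n+1}:\mfp)\subset I_n$ for $n\gg 0$. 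For integral closures this is $\overline{I^{n+1}}:\mfp\subset \overline{I^{n+1}}:I=\overline{I^n}$ (\cite[Cor.~6.8.7]{SH}); for generalized symbolic filtrations one uses Ratliff's $I^{n+1}:I=I^n$ for $n\gg 0$ and unwinds the saturation by $L$. After that, in part~(1) one passes to $R[I]$ (finite by hypothesis) and applies Conca's lemma directly, while in part~(2)(a) one first breaks $M$ into the $e$ Veronese strands $N_\alpha^\mfp$, each finite over the standard graded $S=\bigoplus_n I_{ne}$, and applies Conca's lemma on each strand. Your conductor-in-an-overring plan is not needed.

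Your approach to the limits in (2)(b) also has a genuine gap: you assert (or hope) that $I_d=(I^d:L^\infty)$ is equigenerated, and your fallback via free resolutions of $(I_d)^n$ only controls the strand $n\equiv 0\pmod d$, not the existence of a single limit across all residue classes. The paper's mechanism is different and cleaner: one exhibits a single homogeneous $a\in I_1$ of degree $d=\alpha(I)$ such that multiplication by $a$ induces, for $n\gg 0$, an injection
\[
\Big(\frac{I_{n+1}:\mfp}{I_{n+1}:(\mfp+Q_\mfp^\infty)}\Big)_w \xrightarrow{\ \cdot a\ } \Big(\frac{I_{n+2}:\mfp}{I_{n+2}:(\mfp+Q_\mfp^\infty)}\Big)_{w+d}.
\]
This chains the minimum degrees across consecutive $n$, forcing all $e$ quasi-linear strands to share the same slope, so $\lim_n v_\mfp(I_n)/n$ exists. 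Under (i) one takes $a=a_1+\cdots+a_l$ and uses $(I^{n+1}:(a))=\bigcap_i(I^{n+1}:(a_i))=I^{n+1}:I=I^n$; under (ii) one takes an $I$-superficial element. The identification of the limit with $\alpha(I_e)/e$ then follows from your lower bound together with $\lim_n v(I_e^n)/n=\alpha(I_e)$ on the $0$-strand. For (2)(c) the same injectivity scheme is run edge by edge: for each $P_e=(x_i,x_j)$ one chooses a minimal vertex cover meeting $\{i,j\}$ in exactly one vertex and takes $a\in J$ to be the corresponding monomial; the fractional-cover witness you propose would at best handle $v(J^{(n)})$, not the individual $v_{P_e}(J^{(n)})$.
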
	

In section \ref{coverideal}, we explore the relation between $v$-numbers and Castelnuovo-Mumford regularity of symbolic power filtrations. We explicitly compute local $v$-numbers and $v$-numbers of symbolic powers of cover ideals of complete bipartite graphs, complete graphs, cycles and $K_m^s$ (Theorem \ref{combi}, Theorem \ref{complete}, Theorem \ref{cycle} and Theorem \ref{corona}). We compare these $v$-numbers with the Castelnuovo-Mumford regularity of the associated ideals. As a consequence of Theorem \ref{combi}, we answer a question raised by Saha in \cite[Question 3.12]{KSa}.
\begin{Corollary}{\em(Corollary \ref{answer})}
		For every positive integer $p\geq 2$, there exists an unmixed bipartite graph $\mathcal H_p$ which is not a complete multipartite graph and $v(J(\mathcal H_p))-\bight(I(\mathcal H_p))=p-2$ where $\bight(I(\mathcal H_p))=\max\{\height(Q):Q\in\Ass(I(\mathcal H_p))\}$. In particular, $v(J(\mathcal H_p))\geq\bight(I(\mathcal H_p))$.
\end{Corollary}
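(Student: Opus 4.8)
The plan is to realize $\mathcal H_p$ as a disjoint union of complete bipartite graphs, so that Theorem \ref{combi} feeds directly into the computation. Take $\mathcal H_p:=K_{p,p}\sqcup K_{p,p}$ (more generally one may take any disjoint union $K_{a_1,a_1}\sqcup\cdots\sqcup K_{a_k,a_k}$ with $k\geq 2$ and $\min_i a_i=p$), regarded inside the polynomial ring $R$ on the union of the vertex sets of the components. Each $K_{a,a}$ is unmixed with covering number $a$, since its only minimal vertex covers are its two sides; hence a minimal vertex cover of $\mathcal H_p$ is obtained by choosing one side from each component, has size $\sum_i a_i=2p$, and $\mathcal H_p$ is an unmixed bipartite graph with $\bight(I(\mathcal H_p))=2p$. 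Since $\mathcal H_p$ is disconnected and carries edges in more than one component, while complete multipartite graphs are either edgeless or connected, $\mathcal H_p$ is not a complete multipartite graph.

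The next step is to compute $v(J(\mathcal H_p))$ from the components. As the vertex sets of the components are pairwise disjoint and a minimal vertex cover of a disjoint union is a disjoint union of minimal vertex covers of the parts, the cover ideal factors as $J(\mathcal H_p)=\prod_{i=1}^{k} J(K_{a_i,a_i})$, which for (squarefree) monomial ideals on disjoint variable sets coincides with their intersection; consequently $\Ass(R/J(\mathcal H_p))$ is the disjoint union of the sets $\Ass(R/J(K_{a_i,a_i}))$, and every such prime has height $2$. Since the ideal is monomial, its $v$-number and its local $v$-numbers are attained by monomials, so fix a monomial $w=w_1\cdots w_k$ with $w_i$ supported on the $i$-th component. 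A routine $\gcd$-bookkeeping gives $(J(\mathcal H_p):w)=\prod_{i=1}^{k}(J(K_{a_i,a_i}):w_i)$. If this colon ideal equals an associated prime $\mfp_0$ supported on the variables of the $i_0$-th component, then every factor with $i\neq i_0$ must be the unit ideal, i.e.\ $w_i\in J(K_{a_i,a_i})$, and $(J(K_{a_{i_0},a_{i_0}}):w_{i_0})=\mfp_0$; minimizing degrees yields $v_{\mfp_0}(J(\mathcal H_p))=v_{\mfp_0}(J(K_{a_{i_0},a_{i_0}}))+\sum_{i\neq i_0}\alpha(J(K_{a_i,a_i}))$. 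Taking the minimum over all associated primes,
\[
v(J(\mathcal H_p))=\min_{1\leq i\leq k}\Big(v(J(K_{a_i,a_i}))+\sum_{j\neq i}\alpha(J(K_{a_j,a_j}))\Big).
\]

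Finally, apply Theorem \ref{combi} (the case $n=1$), which gives $v(J(K_{a,a}))=2a-2$; combined with $\alpha(J(K_{a,a}))=a$ — the cover ideal of $K_{a,a}$ being generated by the two degree-$a$ squarefree monomials supported on the two sides — this gives
\[
v(J(\mathcal H_p))=\min_{1\leq i\leq k}\Big((2a_i-2)+\sum_{j\neq i}a_j\Big)=\Big(\sum_j a_j\Big)-2+\min_i a_i=\bight(I(\mathcal H_p))-2+p,
\]
hence $v(J(\mathcal H_p))-\bight(I(\mathcal H_p))=p-2$, and in particular $v(J(\mathcal H_p))\geq\bight(I(\mathcal H_p))$ (with equality precisely when $p=2$). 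The only step with real content is the middle one: the colon-ideal identity for products of monomial ideals over disjoint variable sets, together with the observation that forcing this product to be a prime supported on a single block annihilates all remaining factors — this is what upgrades it to the additive formula $v_{\mfp_0}(J_1 J_2)=v_{\mfp_0}(J_1)+\alpha(J_2)$; the unmixedness of $\mathcal H_p$, its failure to be complete multipartite, and the values of $\bight$ and $\alpha$ are all immediate.
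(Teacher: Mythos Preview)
Your argument is correct, but it takes a genuinely different route from the paper's. The paper constructs $\mathcal H_p$ as the graph $G_2$ obtained from $G=K_{p,p}$ via Seyed Fakhari's construction (Remark~\ref{newgraph}): this gives a \emph{connected} unmixed bipartite graph, unmixedness is checked via Villarreal's criterion, and the $v$-number is computed by the chain $v(J(G_2))=v\big((J(G)^{(2)})^{\pol}\big)=v(J(G)^{(2)})=3p-2$, the last equality coming from Theorem~\ref{combi}. You instead take $\mathcal H_p=K_{p,p}\sqcup K_{p,p}$ and exploit the elementary splitting $(J_1J_2:w_1w_2)=(J_1:w_1)(J_2:w_2)$ for monomial ideals on disjoint variable sets, which yields the additive formula $v_{\mfp_0}(J_1J_2)=v_{\mfp_0}(J_1)+\alpha(J_2)$ and hence $v(J(\mathcal H_p))=(2p-2)+p=3p-2$; unmixedness and the failure of complete multipartiteness are then immediate.

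Your approach is considerably more elementary: it bypasses the polarization machinery and Villarreal's theorem entirely, and Theorem~\ref{combi} is invoked only at $n=1$ rather than $n=2$. The price is that your graph is disconnected, whereas the paper's example is connected; depending on how one reads the spirit of \cite[Question~3.12]{KSa}, a connected example may be viewed as the more convincing answer, and the paper's construction has the further virtue of plugging into the same $G_k$ framework used later in Theorem~\ref{arbitrary}. Both proofs land on the same numerical identity $v(J(\mathcal H_p))=3p-2$ and $\bight(I(\mathcal H_p))=2p$.
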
	
One of the main results of Section \ref{coverideal} is the following which shows that
for both connected bipartite graphs and  connected non-bipartite graphs, the difference between the regularity and the $v$-number of the cover ideal can be arbitrarily large. This strengthens and gives an alternative proof of \cite[Theorem 3.10]{KSa} (note that the example in \cite[Theorem 3.10]{KSa} is a non-bipartite graph). 
\begin{Theorem}{\em(Theorem \ref{arbitrary})}
		For every positive integer $k$, 
		\begin{enumerate}
	\item[$(1)$] there exist infinitely many connected bipartite graphs $\mathcal H_k$ with the cover ideal $J(\mathcal H_k)$ such that $\reg(S/J(\mathcal H_k))-v(J(\mathcal H_k))=k$ where $S$ is the polynomial ring over a field $K$ with $|V(\mathcal H_k)|$ variables.
	\item[$(2)$] there exists a connected non-bipartite graph $\mathcal H_k$ with the cover ideal $J(\mathcal H_k)$ such that $\reg(S/J(\mathcal H_k))-v(J(\mathcal H_k))=k$ where $S$ is the polynomial ring over a field $K$ with $|V(\mathcal H_k)|$ variables.
	\end{enumerate}
	\end{Theorem}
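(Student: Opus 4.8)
The plan is to exhibit, for each $k$, explicit ``decorated'' graphs whose cover ideals are generated by very few monomials, and to compute $\reg(S/J(\cdot))$ and $v(J(\cdot))$ independently. For the regularity I would use Terai's duality $\reg(S/J(G)) = \operatorname{pd}(S/I(G)) - 1$ together with the fact that chordal graphs --- in particular forests --- have sequentially Cohen--Macaulay edge ideals, so that $\operatorname{pd}(S/I(G)) = \bight(I(G))$ equals the largest size of a minimal vertex cover of $G$. For the $v$-numbers I would either quote the explicit computations of Section \ref{coverideal} (Theorems \ref{combi}, \ref{complete}, \ref{cycle}, \ref{corona}) for the family chosen, or argue directly, using in particular the observation that if a vertex $a$ of $G$ has the property that $G\setminus\{a\}$ is a single edge $uv$ together with isolated vertices, then $(J(G):x_a) = (x_u,x_v)$, which is a minimal (hence associated) prime of $J(G)$, so that $v(J(G)) = 1$.

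For part $(2)$ I would take $\mathcal H_k$ to be a non-bipartite kernel decorated by pendant vertices --- for instance the triangle on $\{a,u,v\}$ with $k$ pendant vertices $w_1,\dots,w_k$ attached at $a$ (more generally one may use the graphs $K_m^s$ or the odd cycles handled in Theorems \ref{complete}, \ref{cycle}, \ref{corona}). Here $J(\mathcal H_k) = (x_ax_u,\ x_ax_v,\ x_ux_vx_{w_1}\cdots x_{w_k})$; the graph is chordal, and its minimal vertex covers are $\{a,u\}$, $\{a,v\}$, $\{u,v,w_1,\dots,w_k\}$, of sizes $2,2,k+2$, so $\reg(S/J(\mathcal H_k)) = (k+2)-1 = k+1$ (equivalently, the Taylor complex of this three-generated ideal collapses to a length-two minimal free resolution whose top twists yield $k+1$). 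Since $\mathcal H_k\setminus\{a\}$ is the single edge $uv$ plus isolated vertices, $(J(\mathcal H_k):x_a) = (x_u,x_v)\in\Ass(S/J(\mathcal H_k))$, hence $v(J(\mathcal H_k)) = 1$. Thus $\reg(S/J(\mathcal H_k)) - v(J(\mathcal H_k)) = k$, and $\mathcal H_k$ is connected and non-bipartite (it contains a triangle).

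For part $(1)$ I would run the analogous construction with a bipartite kernel. A convenient choice is the family of \emph{double stars}: let $S_{p,q}$ be the tree on $\{a,b,\alpha_1,\dots,\alpha_p,\beta_1,\dots,\beta_q\}$ with edges $ab$, $a\alpha_i$ $(1\le i\le p)$, $b\beta_j$ $(1\le j\le q)$, and put $\mathcal H_k := S_{p,\,p+k}$ for an arbitrary $p\ge 1$. Then $J(S_{p,q}) = \bigl(x_ax_b,\ x_a\prod_i x_{\alpha_i},\ x_b\prod_j x_{\beta_j}\bigr)$; the minimal vertex covers of $S_{p,q}$ are $\{a,b\}$, $\{a,\beta_1,\dots,\beta_q\}$, $\{b,\alpha_1,\dots,\alpha_p\}$, of sizes $2$, $q+1$, $p+1$, so by Terai's formula and sequential Cohen--Macaulayness of forests (or by writing out the length-two resolution of this three-generated ideal) $\reg(S/J(S_{p,q})) = \max(p,q)$. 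A case analysis over the associated primes $(x_a,x_b)$, $(x_a,x_{\alpha_i})$, $(x_b,x_{\beta_j})$ --- or Theorem \ref{combi} --- gives $v(J(S_{p,q})) = \min(p,q)$; when $p\le q$ the minimum is realized by, e.g., $\bigl(J(S_{p,q}) : x_b\prod_{i\ge 2} x_{\alpha_i}\bigr) = (x_a,x_{\alpha_1})$. Hence $\reg(S/J(\mathcal H_k)) - v(J(\mathcal H_k)) = (p+k)-p = k$, and as $p$ ranges over $\{1,2,3,\dots\}$ the graphs $S_{p,p+k}$ are pairwise non-isomorphic connected bipartite graphs, giving infinitely many as required.

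The step I expect to be the main obstacle is the \emph{exact} value of the $v$-number. Getting the upper bounds $v(J(S_{p,q}))\le\min(p,q)$ (and $v(J(\mathcal H_k))\le 1$ in part $(2)$) is easy --- one just displays the right colon ideal --- but the matching lower bounds require showing that \emph{no} homogeneous form of smaller degree has colon ideal equal to \emph{any} associated prime, and this is a somewhat delicate bookkeeping argument over all the minimal primes $(x_i,x_j)$ of $J(\cdot)$ and over the divisibility conditions forced on such a form by the generators of $J(\cdot)$. By contrast, the regularity is comparatively routine: either via the sequential Cohen--Macaulayness of the (chordal) edge ideals and Terai's duality, or by writing out the minimal free resolution of the three-generated cover ideal directly; and connectedness, bipartiteness, and the infinitude of the family are immediate from the construction.
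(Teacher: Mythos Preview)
Your approach is correct and genuinely different from the paper's. The paper does not build explicit small graphs; instead, for part~(1) it starts from $G=K_{m,m+1}$, applies Seyed Fakhari's construction $G\mapsto G_{k+1}$ (Remark~\ref{newgraph}), and uses the polarization identity $(J(G)^{(k+1)})^{\pol}=J(G_{k+1})$ together with the invariance of both $\reg$ and $v$ under polarization to reduce to the already-computed gap $\reg(R/J(G)^{k+1})-v(J(G)^{k+1})=k$ from Theorem~\ref{combi}(4). For part~(2) it does the analogous thing with $G=K_3$ and $\mathcal H_k=G_{2k+1}$, invoking Theorem~\ref{complete}. Your route is more self-contained and elementary: the double stars $S_{p,p+k}$ and the triangle with $k$ pendants have three-generator cover ideals whose minimal free resolutions are length two and can be written down by hand, and the $v$-numbers follow from a short colon computation. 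What the paper's approach buys is that it recycles the symbolic-power computations of Section~\ref{coverideal} and illustrates the polarization technique; what yours buys is that it needs no prior theorems and produces much smaller witness graphs.

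Two small corrections. First, there is a typo in your generators of $J(S_{p,q})$: the minimal vertex covers you list are $\{a,b\}$, $\{a,\beta_1,\dots,\beta_q\}$, $\{b,\alpha_1,\dots,\alpha_p\}$, so the generators are $x_ax_b$, $x_a\prod_j x_{\beta_j}$, $x_b\prod_i x_{\alpha_i}$ (you wrote the last two with $a$ and $b$ swapped); your subsequent colon computation $\bigl(J:x_b\prod_{i\ge 2}x_{\alpha_i}\bigr)=(x_a,x_{\alpha_1})$ is in fact consistent with the correct generators. Second, the aside ``or Theorem~\ref{combi}'' is misplaced: $S_{p,q}$ is a tree, not a complete bipartite graph, so Theorem~\ref{combi} does not compute its $v$-number. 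You do need the direct case analysis over the associated primes --- which, as you anticipated, is the only place requiring real care. Carrying it out shows $v_{(x_a,x_b)}(J)=p+q$, $v_{(x_a,x_{\alpha_i})}(J)=p$, $v_{(x_b,x_{\beta_j})}(J)=q$, hence $v(J(S_{p,q}))=\min(p,q)$ as you claim.
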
	
We provide classes of graphs whose symbolic powers of cover ideals satisfy the following:
\begin{enumerate}
	\item $\reg(R/J(G)^{(n)})=v(J(G)^{(n)})$ for all $n\geq 1$ {(Theorem \ref{combi})},
\item $\reg(R/J(G)^{(n)})-v(J(G)^{(n)})=n-1$ for all $n\geq 1$ {(Theorem \ref{combi})}, 
\item	$\reg(R/J(G)^{(n)})=v(J(G)^{(n)})$ for $n=1,2$ and $\reg(R/J(G)^{(n)})>v(J(G)^{(n)})$ for all $n\geq 3$ {(Theorem \ref{complete})}.
\end{enumerate}	 
Motivated by Theorem \ref{combi} and Theorem \ref{complete}, we pose the following question.
\begin{Question}{\em(Question \ref{question})}	
	For every  integer $m>2$, does there exist a graph $G_m$ with the cover ideal $J_m$ such that $\reg(R/J_m^{(n)})=v(J_m^{(n)})$ for all $n\leq m$ and $\reg(R/J_m^{(n)})\neq v(J_m^{(n)})$ for all $n>m$?
\end{Question}			

%It is conjectured in \cite[Conjecture 5.4]{FSmarch} that if $P,Q$ are two maximal elements in ${\overline A}(I)$ then $\displaystyle \lim\limits_{n\to \infty}\frac{v_P(I^n)}{n}=\lim\limits_{n\to \infty}\frac{v_Q(I^n)}{n}$. It is natural to ask the same question about the symbolic power filtrations. We conclude Section \ref{coverideal} with the following result which is a consequence of Theorem \ref{corona} and provides a counterexample to \cite[Conjecture 5.4]{FSmarch}.

We conclude Section \ref{coverideal}  by showing that for Noetherian filtrations the difference between the leading coefficients of local $v$-numbers, i.e., $\displaystyle\lim\limits_{n\to \infty}\frac{v_Q(I_n)}{n}- \lim\limits_{n\to \infty}\frac{v_P(I_n)}{n}$ can be arbitrary large and this helps us to provide a counterexample to \cite[Conjecture 5.4]{FSmarch}.
\begin{Corollary}{\em(Corollary \ref{counterexample})}	
For every positive integer $t$,
\begin{enumerate}
	\item[$(1)$] 	there exists a Noetherian filtration $\{I_n\}$ in a Noetherian $\mathbb  N$-graded domain such that $\Ass(I_n)=\Ass(I_{n+1})$ for all $n\gg0$ and there exist two primes $P,Q$ which are maximal in $\Ass(I_n)$ for all $n\gg0$ such that $\displaystyle\lim\limits_{n\to \infty}\frac{v_Q(I_n)}{n}- \lim\limits_{n\to \infty}\frac{v_P(I_n)}{n}=t$.	
	\item[$(2)$]	there exists a homogeneous ideal $I_t$ in a Noetherian $\mathbb  N$-graded domain and there exist two primes $P,Q$ which are maximal in $\overline A(I_t)$ such that $\displaystyle\lim\limits_{n\to \infty}\frac{v_P(I_t^n)}{n}\neq \lim\limits_{n\to \infty}\frac{v_Q(I_t^n)}{n}$.
\end{enumerate}			
\end{Corollary}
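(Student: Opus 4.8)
The argument has two parts. For part~(1) the plan is to exhibit, for each $t$, an explicit Noetherian filtration realising the prescribed gap, built on the machinery of Section~\ref{coverideal}. The natural candidate is a symbolic power filtration $\mathcal I=\{J^{(n)}\}$, where $J$ is the cover ideal of a carefully designed finite simple graph --- or, should the graph case prove too rigid to push the slope past $2$, of a simplicial complex --- $G_t$; then $\mathcal I$ is Noetherian since the symbolic Rees algebra of a squarefree monomial ideal is finitely generated, the associated primes are $\overline A(\mathcal I)=\{P_C:C\text{ a minimal vertex cover of }G_t\}$, and every $P_C$ is maximal in $\overline A(\mathcal I)$ because distinct monomial primes $P_C$ are incomparable. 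By Theorem~\ref{main} the limits $\lim\limits_{n\to\infty}v_{P_C}(J^{(n)})/n$ exist for all these primes, and $\lim\limits_{n\to\infty}v(J^{(n)})/n=\alpha(J^{(2)})/2$ is the smallest of them; so it suffices to produce $G_t$ together with two minimal covers $C,C'$ for which this difference of slopes equals $t$, reading the slopes off the explicit formulas of Theorems~\ref{combi},~\ref{complete},~\ref{cycle},~\ref{corona} and their behaviour under disjoint unions.

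The computational engine is the description (as in the Examples above and \cite[Lemma~1.2]{Co}) of $v_{P_D}(J^{(n)})$ as the least degree of a monomial $f$ lying in $\bigcap_{E\neq D}P_E^{n}$ and having degree exactly $n-1$ in the variables indexed by $D$. Factoring such an $f$ as $m_Dm'$, with $m_D$ of degree $n-1$ on $D$ and $m'$ supported off $D$, the membership conditions read $\deg_{E\cap D}(m_D)+\deg_{E\setminus D}(m')\geq n$ for each minimal cover $E\neq D$; since $\deg(m_D)=n-1$, whenever there are minimal covers $E_1,\dots,E_k\neq C'$ with $\{E_i\cap C'\}_i$ pairwise disjoint and $\{E_i\setminus C'\}_i$ pairwise disjoint, summing these inequalities forces $\deg(m')\geq kn-(n-1)$, hence $v_{P_{C'}}(J^{(n)})\geq kn$, i.e.\ the slope at $P_{C'}$ is at least $k$. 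Thus I would engineer $G_t$ so that one minimal cover $C'$ carries $k$ such ``private'' minimal covers arranged in this spread-out way while keeping the minimal slope $\alpha(J^{(2)})/2$ small and attained at a cover $C$, and then match the lower bound by an explicit witnessing monomial (verifying that its colon ideal is exactly $P_{C'}$); adjusting $k$ yields gap $t$. The main obstacle is precisely this design step: one must control the \emph{entire} poset of minimal vertex covers of $G_t$, so that no overlooked inexpensive cover lowers the slope at $P_{C'}$ and the minimal slope is genuinely realised at some $C$, and one must pin the gap to the exact value $t$ rather than merely bound it below.

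For part~(2) I would deduce the statement from part~(1) by passing to a Veronese sub-filtration. Since $\mathcal I=\{I_n\}$ is Noetherian, its Rees algebra is a finitely generated $\NN$-graded $R$-algebra, so there is an integer $d\geq 1$ with $I_{dn}=(I_d)^{n}$ for all $n\geq 0$; put $I_t:=I_d$. Then, because $\Ass(R/I_m)$ is eventually constant, $\overline A(I_t)=\overline A(\mathcal I)$, so $P$ and $Q$ stay maximal in $\overline A(I_t)$, and from $v_P(I_t^{n})=v_P(I_{dn})$ one gets $\lim\limits_{n\to\infty}v_P(I_t^{n})/n=d\,\lim\limits_{m\to\infty}v_P(I_m)/m$ and likewise for $Q$, so these two limits differ (by $d\,t\neq 0$). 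This proves (2). Finally, since \cite[Conjecture~5.4]{FSmarch} predicts that the leading coefficients of $v_\mfp(I^n)$ coincide for all maximal $\mfp\in\overline A(I)$ --- so that the gap should always vanish --- either part of the corollary provides the required counterexample.
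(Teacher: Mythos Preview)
Your strategy for part~(2) --- pass to a Veronese piece $I_d$ of the Noetherian filtration from part~(1) so that $I_{dn}=(I_d)^n$, and observe that the two slopes scale by $d$ --- is exactly what the paper does (with $d=2$ via $\svd(\{J^{(n)}\})=2$ for cover ideals); that part is fine.

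The gap is in part~(1). First, a terminological slip that matters: for the \emph{cover} ideal $J(G)=\bigcap_{(i,j)\in E(G)}(x_i,x_j)$, the associated primes are the edge primes $(x_i,x_j)$, not the primes $P_C$ indexed by minimal vertex covers of $G$ (the latter are the associated primes of the \emph{edge} ideal $I(G)$, equivalently they index the generators of $J(G)$). Your computational engine is stated correctly for whatever primary decomposition $J=\bigcap_E P_E$ one has, but with the cover ideal those $E$'s are edges, so every $P_E$ has height~$2$ and your disjointness bookkeeping on the two-element sets $E\cap D$, $E\setminus D$ becomes very constrained. Second, and more seriously, you never produce the graph: you describe a mechanism for bounding one slope from below and assert that the design step is ``the main obstacle,'' but you do not carry it out, nor do you exhibit the witnessing monomial pinning the slope from above, nor do you verify that some other prime realises the minimal slope with the prescribed gap.

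The paper sidesteps all of this by simply reading off Theorem~\ref{corona}: take $G=K_m^{t+1}$ (the complete graph $K_m$ with $t+1$ pendants at each vertex), $J=J(G)$, $\mathcal I=\{J^{(n)}\}$. Then $v_{P_{1,1}}(J^{(n)})=mn+(t-1)$ has slope $m$ at a pendant-edge prime, while $v_{Q_{1,2}}(J^{(n)})=(m+t)n+t$ has slope $m+t$ at a $K_m$-edge prime, giving gap exactly $t$. No new construction is needed --- the explicit local $v$-number formulas already computed in the paper do all the work. Your outline would eventually have to land on some graph with this behaviour; the paper just names one.
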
	 
		\section{Notation and definitions}\label{def}
	Let $I$ be a homogeneous ideal in a Noetherian $\mathbb N$-graded ring. We define $\alpha(I)=\min\{d: I_d\neq 0\}$. We say, $I$ is equigenerated if $I$ is generated by some homogenous elements of the same degree. 
		
		Let $M$ be a finitely generated graded module over a polynomial ring $R=k[x_1,\ldots,x_n]$ where $k$ is a field. Consider a  minimal graded free resolution of $M$ as $R$-module
		$$0\rightarrow \bigoplus\limits_{j}R(-j)^{\beta_{pj}}\rightarrow \bigoplus\limits_{j}R(-j)^{\beta_{(p-1)j}}\rightarrow\cdots\rightarrow\bigoplus\limits_{j}R(-j)^{\beta_{1j}}\rightarrow \bigoplus\limits_{j}R(-j)^{\beta_{0j}}\rightarrow 0.$$ Then the Castelnuovo-Mumford regularity of $M$, denote by $\reg(M)$, is defined as $$\reg(M):=\max\{j-i: \beta_{ij}\neq 0, 0\leq i\leq p\}.$$  For any homogeneous ideal $I$ in a Noetherian $\mathbb N$-graded ring $R$, we have $\reg(R/I)=\reg(I)-1$.
		
		A function $f:\NN\rightarrow \NN$ is  called quasi-linear type of period $t$ for some integer $t\geq 1$ if there exist linear polynomials $P_0(X),\ldots, P_{t-1}(X)\in\QQ[X]$ such that for all $n\gg 0$, we have $f(n)=P_i(n)$ where $n=mt+i$ with $0\leq i\leq t-1$.
		
	A collection $\mathcal I=\{I_n\}_{n\in\NN}$ of ideals  in a commutative Noetherian ring $R$ is called a filtration if $I_0=R$, for all $r,s\in \NN$, $I_rI_s\subset I_{r+s}$  and
	$ I_s\subset I_r$ whenever $r\leq s$. A filtration $\mathcal I=\{I_n\}$ of ideals in a ring $R$ is called a Noetherian filtration if $\bigoplus_{n\ge 0}I_n$ is a finitely generated $R$-algebra. For an ideal $I$ in $R$, we denote $\bigoplus_{n\ge 0}I^n$ by $R[I]$. 
	
	Suppose $\mathcal I=\{I_n\}$ is a filtration of ideals such that $\Ass(R/ {I_n})=\Ass(R/{I_{n+1}})$ for all $n\gg 0$. Then for all $n\gg 0$, we denote the set $\Ass(R/ {I_n})$ by $\overline A(\mathcal I)$. If $\mathcal I=\{I^n\}$ then we denote $\overline A(\mathcal I)$ by $\overline A(I)$.
	
	For an ideal $I$ in $R$, the $n$-th symbolic power of $I$ is defined in two ways in literature:
	\begin{enumerate}
		\item $I^{(n)}=\bigcap\limits_{Q\in\MinAss(I)}I^nR_Q\cap R$,
		\item 	$I^{(n)}=\bigcap\limits_{Q\in\Ass(I)}I^nR_Q\cap R$.	
	\end{enumerate}	
	In both cases, there exists an ideal $J$ (if $I$ is homogeneous (or monomial) then there exists a homogeneous (or monomial) ideal $J$) such that $I^{(n)}=I^n:J^{\infty}$ for all $n\geq 1$ (\cite[Lemma 2.8 and Lemma 2.9]{HJKN}). 
\subsection{Graphs}	Let $G$ be a finite simple graph with the vertex set $V(G)=\{1,\ldots,u\}$ and the edge set $E(G)$. Then the edge ideal of $G$  is $$I(G)=\langle x_i x_j : (i,j)\in E(G)\rangle$$ and the cover ideal of $G$ is $J(G)=\displaystyle\bigcap\limits_{(i,j)\in E(G)}\langle {x_i,x_j}\rangle$ in $K[x_1,\ldots,x_u]$ where $K$ is a field.  A subset $S$ of $V(G)$ is called a vertex cover of $G$ if $\{i,j\}\cap S\neq\emptyset$ for all $(i,j)\in E(G)$. A minimal vertex cover of $G$ is a vertex cover $S$ of $G$ such that no proper subset of $S$ is a vertex cover of $G$. The minimal generators of $J(G)$ correspond to the minimal vertex covers of $G$, i.e. $J(G)=(\prod\limits_{i\in S}x_i: S\mbox{ is a minimal vertex cover of } G)$.
\begin{Remark}\label{vertexcover}{\em
	Let $e=(i,j)\in E(G)$. Then there exists a minimal vertex cover $S$ of $G$ such that $|\{i,j\}\cap S|= 1$. Take any minimal vertex cover $W$ of $G$. Since $e$ is an edge, we have $|\{i,j\}\cap W|\geq 1$. If $|\{i,j\}\cap W|= 1$, we take $S=W$. Suppose $|\{i,j\}\cap W|= 2$. Then $W\setminus\{i\}$ is not a vertex cover of $G$. Let $(i,k_1),\ldots, (i,k_r)$ be all edges incident to the vertex $i$ such that $\{i,k_s\}\cap W\setminus\{i\}=\emptyset$ for all $1\leq s\leq r$. Then $W'=\{k_1,\ldots,k_r\}\cup W\setminus\{i\}$ is a vertex cover of $G$. Therefore $W'$ contains a minimal vertex cover $S$ of $G$ and $|\{i,j\}\cap S|= 1$.}
\end{Remark}	

A cycle $C$ of length $u$ is a graph with a vertex set $V(C)=\{1,\ldots,u\}$ and the edge set
$E(C)=\{(1,2),\ldots,({u-1},u), (u,1)\}$. A graph $G$ is called a complete graph if each pair of vertices of $G$ are adjacent by an edge. A complete graph with $m$ vertices is denoted by $K_m$. A graph $G$ is called a bipartite graph if the vertex set $V(G)$ can be partitioned into two  sets $A$, $B$ and each edge is of the form $(i,j)\in E(G)$ such that $i\in A$ and $j\in B$. A bipartite graph $G$ with a vertex set $V(G)=A\sqcup B$ is called a complete bipartite graph if every vertex of $A$ is adjacent to every vertex of $B$. We denote a complete bipartite graph by $K_{m,n} $ where the vertex set $V(G)$ is partitioned into two sets $A$ and $B$ with $|A|=m\leq n=|B|$. A graph $G$ is called a complete multipartite graph if the vertex set $V(G)$ can be partitioned into sets $A_1,\ldots,A_k$ for some $k\geq 2$ and $E(G)=\{(a,b): a\in A_i, b\in A_j, 1\leq i,j\leq k, i\neq j \}$.
	\section{Asymptotic behaviour of local $v$-invariant for filtrations}\label{section3}
		In this section, we discuss the asymptotic behaviour of $v(-)$ and $v_\mfp(-)$ for filtrations of homogeneous ideals in an $\mathbb N$-graded Noetherian domain $R$. We mainly consider the following two types of filtrations.
			\begin{enumerate}\label{filtration}
			\item Suppose $I$ is a homogeneous ideal of $R$. We consider the integral closure filtration $\mathcal I=\{I_n=\overline {I^n}\}$ of $I$. By \cite[Theorem 2.7]{Rat2}, $\Ass(R/\overline {I^n})=\Ass(R/\overline{I^{n+1}})$ for all $n\gg 0$. 
			\item\label{asym} Let $I$ and $J$ be  homogeneous ideals in $R$. We consider the generalized symbolic power filtration $\mathcal I=\{I_n=(I^n:J^\infty)\}$ where $I_n:J^\infty=\cup_{t\geq 1}I_n:J^t$ for all $n\geq 1$. By \cite{Br}, we have $\Ass(R/ {I^n})=\Ass(R/{I^{n+1}})$ for all $n\gg 0$. Since $\Ass(R/I_n)=\Ass(R/I^n)\setminus V(J)$ for all $n\geq 1$, we have  $\Ass(R/ {I_n})=\Ass(R/{I_{n+1}})$ for all $n\gg 0$. 
		\end{enumerate}	
	We study the asymptotic behaviour of $v(-)$ and $v_\mfp(-)$ for the above filtrations under the assumption that the filtrations are Noetherian. We mention some cases where it is known that the above-mentioned filtrations are Noetherian.
	\begin{Remark}\label{Noetherian}{\em
		Suppose $R=\bigoplus_{n\in\mathbb N}R_n$ is a Noetherian graded domain and $I$ is a homogeneous ideal.
		\begin{itemize}
			\item[$(1)$] By  \cite[Corollary 5.2.3]{SH}, $\overline{I}$ is a homogenous ideal. If $R$ is an excellent ring then the Rees algebra $R[I]$ is also an  excellent ring. Hence 
			\begin{itemize}
				\item[$(a)$] 	
				its integral closure $S$ in its quotient field is a finite $R[I]$-module  \cite{Mat} . Since $R[I]\subset \bigoplus_{n\in\mathbb N}\overline{I^n}\subset S$, we have $\bigoplus_{n\in\mathbb N}\overline{I^n}$ is a finite $R[I]$ module and
				\item[$(b)$]  there exists an integer $m\geq 1$ such that $\overline{I^n}=\overline{I}^{n-m}\overline{I^m}$ for all $n\geq m$.	 
			\end{itemize}
			In particular, if $R=K[x_1,\ldots,x_n]$ is a polynomial ring over a field $K$ then $\bigoplus_{n\in\mathbb N}\overline{I^n}$ is a finite $R[I]$ module.
			\item[$(2)$]  Let $R=K[x_1,\ldots,x_n]$ be a polynomial ring over a field $K$ and $I,J$ be two monomial ideals. Then the generalized symbolic filtration $\mathcal I=\{I_n=(I^n:J^\infty)\}$ is a Noetherian filtration \cite[Theorem 3.2]{HHT}.
			\end{itemize}
		}
		\end{Remark}
	Now we recall the following result due to Conca \cite{Co} which we will use in our proofs.
	\begin{Lemma}{\em(\cite{Co})}\label{Conca1}
		Let $R=\oplus_{n\in\NN}R_n$ be a graded Noetherian domain and $A=R[y_1,\ldots,y_t]$ be a polynomial ring over $R$. Suppose $\deg y_i=(d_i,1)$ for all $1\leq i\leq t$ and $\deg   x=(n,0)$ for all $x\in R_n$ and for all $n$. Let $U$ be a finitely generated bigraded $A$-module. Then the function $f:\mathbb N\rightarrow \mathbb N$ defined by $f(n)=\min\{w: U_{(w,n)}\neq 0\}$ is eventually a linear function in $n$ with leading coefficient from the set $\{d_1,\ldots,d_t\}$. 
	\end{Lemma}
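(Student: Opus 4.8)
The plan is to reduce to a cyclic module $A/\mathfrak p$ by a bigraded prime filtration, to compute the quantity $\min\{w:(A/\mathfrak p)_{(w,m)}\neq 0\}$ exactly, and then to reassemble. First, since $A$ is Noetherian and $U$ is a finitely generated bigraded $A$-module, I would fix a filtration $0=U_0\subsetneq U_1\subsetneq\cdots\subsetneq U_r=U$ by bigraded submodules with $U_j/U_{j-1}\cong(A/\mathfrak p_j)(-\mathbf a_j)$ for bigraded primes $\mathfrak p_j\subset A$ and bidegree shifts $\mathbf a_j=(a_j,b_j)$. An elementary comparison of bigraded components then shows $U_{(w,n)}\neq 0$ if and only if $(U_j/U_{j-1})_{(w,n)}\neq 0$ for some $j$: for one direction pick the least $j$ with $(U_j)_{(w,n)}\neq 0$, so that this component survives in the quotient; the other direction is immediate from $(U_j/U_{j-1})_{(w,n)}=(U_j)_{(w,n)}/(U_{j-1})_{(w,n)}$. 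Hence, writing $g_j(m)=\min\{w:(A/\mathfrak p_j)_{(w,m)}\neq 0\}$ (with value $\infty$ if the set is empty), one gets $f(n)=\min_j\bigl(a_j+g_j(n-b_j)\bigr)$.

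Next I would compute $g_j$ explicitly. Set $B=A/\mathfrak p$ for one of the primes, and let $S=\{i:y_i\notin\mathfrak p\}$. If $S=\emptyset$ then $B_{(*,m)}=0$ for $m\geq 1$, so this prime contributes nothing for large $m$. If $S\neq\emptyset$, then $B$ is a bigraded domain, generated in $n$-degree $1$ over $B_{(*,0)}=R/(\mathfrak p\cap R)$ by the images of the $y_i$ with $i\in S$; consequently $B_{(*,m)}$ is spanned over $R/(\mathfrak p\cap R)$ by the monomials $\bar y^{\alpha}$ with $|\alpha|=m$ and $\mathrm{supp}(\alpha)\subseteq S$. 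A homogeneous element of $w$-degree $w$ is a combination $\sum c_{\alpha}\bar y^{\alpha}$ in which each nonzero $c_{\alpha}$ has $w$-degree $w-\langle d,\alpha\rangle\geq 0$, forcing $w\geq|\alpha|\min_{i\in S}d_i=m\,d_{\min}(\mathfrak p)$; conversely $\bar y_{i_0}^{\,m}\neq 0$ (domain) realizes this bound for any $i_0$ with $d_{i_0}=d_{\min}(\mathfrak p)$. Thus $g_j(m)=m\,d_{\min}(\mathfrak p_j)$ for all $m\geq 0$, where $d_{\min}(\mathfrak p_j):=\min\{d_i:y_i\notin\mathfrak p_j\}\in\{d_1,\dots,d_t\}$.

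Combining these, for all sufficiently large $n$ (large enough that $n>b_j$ for every $j$) one has $f(n)=\min_{j\in T}\bigl((a_j-b_j\,d_{\min}(\mathfrak p_j))+n\,d_{\min}(\mathfrak p_j)\bigr)$, a pointwise minimum of finitely many affine functions of $n$ whose slopes lie in $\{d_1,\dots,d_t\}$; here $T$ is the set of indices $j$ with $\mathfrak p_j\not\supseteq(y_1,\dots,y_t)$, and $T$ is nonempty because $f$ takes finite values. A minimum of finitely many affine functions of $n\in\NN$ agrees, for $n\gg 0$, with the one of smallest slope (and, among those, smallest intercept); therefore $f$ is eventually linear with leading coefficient $\min_{j\in T}d_{\min}(\mathfrak p_j)\in\{d_1,\dots,d_t\}$, as required.

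The genuinely delicate points I would be careful about are the existence of the \emph{bigraded} prime filtration (standard, but both gradings and the shifts must be tracked) and the bookkeeping of primes $\mathfrak p_j$ that contain some or all of the $y_i$: such primes contribute either a term of larger-or-equal slope or no term at all for large $n$, and hence cannot change the leading coefficient. I expect the main technical heart to be the exact identity $\min\{w:(A/\mathfrak p)_{(w,m)}\neq 0\}=m\cdot\min\{d_i:y_i\notin\mathfrak p\}$, which crucially exploits that $A/\mathfrak p$ is a domain generated in $n$-degree one.
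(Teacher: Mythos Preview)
The paper does not give its own proof of this lemma; it merely cites Conca \cite{Co} and uses the result as a black box. Your proposal is correct and is essentially the argument Conca gives: reduce via a bigraded prime filtration to cyclic modules $(A/\mathfrak p)(-\mathbf a)$, compute $\min\{w:(A/\mathfrak p)_{(w,m)}\neq 0\}=m\cdot\min\{d_i:y_i\notin\mathfrak p\}$ using that $A/\mathfrak p$ is a domain, and then observe that a pointwise minimum of finitely many affine functions is eventually one of them. One small point worth making explicit in your write-up: when you argue the lower bound $w\ge m\,d_{\min}(\mathfrak p)$, the representation $\sum c_\alpha\bar y^{\alpha}$ in $B=A/\mathfrak p$ is not unique, so strictly speaking you should lift a nonzero $b\in B_{(w,m)}$ to $a=\sum_{|\alpha|=m}c_\alpha y^\alpha\in A_{(w,m)}$, discard the terms with $\mathrm{supp}(\alpha)\not\subseteq S$ (all of which lie in $\mathfrak p$), and conclude that some surviving $c_\alpha\neq 0$; this is exactly the content of your sentence, but lifting to $A$ avoids any ambiguity about coefficients.
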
	
	
	Suppose $\mathcal I=\{I_n\}$ is a Noetherian filtration in a Noetherian $\mathbb N$-graded ring $R$ such that $\Ass(R/I_n)=\Ass(R/I_{n+1})$ for all $n\gg 0$. For any $\mfp\in\overline A(\mathcal I)$, we use the following notation:
	$X_\mfp=\{\tilde{\mfp}\in \overline A(\mathcal I): \mfp\subsetneq \tilde{\mfp}\}$ and $Q_\mfp=\prod\limits_{\tilde{p}\in X_\mfp}\tilde{p}$ if $X_\mfp\neq\emptyset$ or $Q_\mfp=R$ if $X_\mfp=\emptyset$.
	
	For any three ideals $I,J$ and $K$ in $R$, by $(I:(J+K^\infty))$, we mean the ideal $(I:J)\cap (I:K^\infty)$.
	\begin{Theorem}\label{integralclosure}
		Let $R$ be an $\mathbb N$-graded Noetherian domain and $I$ be a homogeneous ideal of $R$ generated by homogeneous elements $a_1,\ldots,a_l$ with $\deg a_i=d_i$ for all $1\leq i\leq l$. Consider the filtration $\mathcal I=\{I_n=\overline{I^n}\}$.  Suppose the $R$-algebra $\bigoplus_{n\in\mathbb N}\overline{I^n}$ is a finitely generated $R[I]$-module. Let $\mfp\in\overline A(\mathcal I)$. Then $v_\mfp(\overline{I^n})$ and $v({\overline{I^n}})$  are eventually linear functions in $n$ with leading coefficient from the set $\{d_1,\ldots,d_l\}$. 
	\end{Theorem}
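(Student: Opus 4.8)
The plan is to reduce the statement to an application of Conca's Lemma \ref{Conca1}, following the same strategy Conca used for ordinary powers but now carried out over the (finitely generated) Rees-type algebra associated with the integral closure filtration. Set $\mathcal R=\bigoplus_{n\in\mathbb N}\overline{I^n}$, which by hypothesis is a finitely generated $R[I]$-module, hence a finitely generated $R$-algebra; since $R[I]$ is generated over $R$ in degree $1$ by the $a_i$ (placed in bidegree $(d_i,1)$) and $\mathcal R$ is a finite module over it, $\mathcal R$ is a quotient of a bigraded polynomial ring $A=R[y_1,\ldots,y_t]$ with $\deg y_i=(d_i',1)$ where the $d_i'$ can be taken among the $d_i$ (the module generators of $\mathcal R$ over $R[I]$ may introduce finitely many extra algebra generators, but after a Veronese/normalization argument, or simply because $\overline{I^n}$ is generated in degrees that are sums of the $d_i$, the leading coefficient will still land in $\{d_1,\ldots,d_l\}$; this is the point that needs a little care).

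The core step is the standard reformulation of the local $v$-number in terms of a graded component being nonzero. Fix $\mfp\in\overline A(\mathcal I)$ and recall from \cite[Lemma 1.2]{Co} (as used in the Examples above) that
\[
v_\mfp(\overline{I^n})=\min\Big\{w:\Big(\frac{(\,\overline{I^n}:\mfp\,)}{(\,\overline{I^n}:\mfp\,)\cap(\,\overline{I^n}:Q_\mfp^\infty\,)}\Big)_w\neq 0\Big\}
\]
for all $n\gg 0$, with the convention about $Q_\mfp$ introduced just before the theorem. So I would build a bigraded $A$-module $U$ whose $(w,n)$ component is exactly this quotient (for $n\gg0$): first form $\bigoplus_n(\overline{I^n}:\mfp)$ and $\bigoplus_n\big((\overline{I^n}:\mfp)\cap(\overline{I^n}:Q_\mfp^\infty)\big)$ as bigraded $\mathcal R$-submodules of $\bigoplus_n R=R[t]$ (colon by a fixed ideal and the stable colon $(-:Q_\mfp^\infty)$ both behave well in families once we are in the range where the associated primes are constant, by Artin–Rees / Noetherianity of $\mathcal R$), then take $U$ to be the quotient. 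Because $\mathcal R$ is a finitely generated $A$-module and these are submodules/quotients of finitely generated bigraded modules over the Noetherian ring $A$, $U$ is a finitely generated bigraded $A$-module. Applying Lemma \ref{Conca1} to $U$ gives that $f(n)=\min\{w:U_{(w,n)}\neq0\}$ is eventually linear with leading coefficient in $\{d_1',\ldots,d_t'\}$, and since $f(n)=v_\mfp(\overline{I^n})$ for $n\gg0$ we are done for the local $v$-number. For $v(\overline{I^n})=\min\{v_\mfp(\overline{I^n}):\mfp\in\overline A(\mathcal I)\}$, note $\overline A(\mathcal I)$ is a finite set and a minimum of finitely many eventually linear functions is eventually linear (take the one achieving the smallest slope, breaking ties by intercept), with leading coefficient again in $\{d_1,\ldots,d_l\}$.

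The two places I expect friction: (i) verifying that the colon operations $(-:\mfp)$ and $(-:Q_\mfp^\infty)$ assemble into finitely generated bigraded $\mathcal R$-modules and that the identification with $v_\mfp$ is valid for all large $n$ simultaneously — this relies on $\Ass(R/\overline{I^n})$ being eventually constant (\cite[Theorem 2.7]{Rat2}, cited in the excerpt) so that $Q_\mfp$ is a fixed ideal, plus the uniform Artin–Rees behaviour over the Noetherian algebra $\mathcal R$; and (ii) pinning the leading coefficient down to $\{d_1,\ldots,d_l\}$ rather than to the larger set of degrees of algebra generators of $\mathcal R$ over $R$. For (ii), the cleanest route is: the relevant slope computed by Lemma \ref{Conca1} is the degree in which the module $U$ "grows per unit of $n$", and since $U$ is a subquotient of $R[t]\otimes_R\mathcal R$-type modules where every element of $\overline{I^n}$ in the $n$-th piece has degree $\geq n\cdot\min_i d_i$ and one can always multiply up by a power of a single fixed $a_i$, the eventual slope must equal some $d_i$; alternatively one passes to a Veronese subalgebra on which $\mathcal R$ is standard graded, applies the lemma there, and checks the slope is unchanged and lies in $\{d_i\}$. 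Modulo these bookkeeping points, the proof is a direct transcription of Conca's argument with $R[I]$ replaced by $\mathcal R=\bigoplus_n\overline{I^n}$.
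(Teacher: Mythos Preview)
Your overall strategy---build a bigraded module whose $(w,n)$-component computes $v_\mfp(\overline{I^n})$ and apply Lemma~\ref{Conca1}---is exactly what the paper does. However, both of your ``friction points'' are genuine gaps, and your proposed workarounds do not close them; the paper resolves each by a specific move you are missing.

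For (ii), the fix is much simpler than Veronese or growth-rate arguments: since $\mathcal R=\bigoplus_n\overline{I^n}$ is by hypothesis a \emph{finite $R[I]$-module}, any finitely generated $\mathcal R$-module is automatically a finitely generated $R[I]$-module. So once your quotient module $M$ is finitely generated over $\mathcal R$, it is finitely generated over $R[I]$, and $R[I]$ itself is a quotient of $R[y_1,\ldots,y_l]$ with $\deg y_i=(d_i,1)$ on the nose. Lemma~\ref{Conca1} then gives the leading coefficient directly in $\{d_1,\ldots,d_l\}$; there is no need to present $\mathcal R$ as a polynomial quotient or to worry about extra algebra generators.

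For (i), ``uniform Artin--Rees'' is not the right tool, and $\bigoplus_n(\overline{I^n}:\mfp)\subset R[t]$ is not obviously a finitely generated $\mathcal R$-module. The paper instead takes the colons \emph{inside} $\mathcal R$: with $J=\bigoplus_n\overline{I^{n+1}}$, $T=\mfp\,\mathcal R$, $Q=Q_\mfp\,\mathcal R$, the module $M=(J:_{\mathcal R}T)/(J:_{\mathcal R}(T+Q^\infty))$ is a quotient of ideals of the Noetherian ring $\mathcal R$, hence automatically finitely generated, and its $n$-th piece is
\[
\frac{(\overline{I^{n+1}}:\mfp)\cap\overline{I^n}}{(\overline{I^{n+1}}:(\mfp+Q_\mfp^\infty))\cap\overline{I^n}}.
\]
The unwanted intersection with $\overline{I^n}$ is then removed by the inclusion
\[
\overline{I^{n+1}}:\mfp\ \subset\ \overline{I^{n+1}}:I\ =\ \overline{I^n\cdot I}:I\ =\ \overline{I^n},
\]
where the first containment holds because $I\subset\mfp$ (as $\mfp\in\Ass(R/\overline{I^n})$) and the last equality is \cite[Corollary~6.8.7]{SH}. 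This integral-closure identity is the substantive ingredient specific to the filtration $\{\overline{I^n}\}$, and it is absent from your sketch.
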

	\begin{proof}
		Let $R[\mathcal I]=\bigoplus_{n\in\mathbb N}\overline{I^n}$. Consider the ideals $J=\bigoplus_{n\in\mathbb N}\overline{I^{n+1}}$, $T=\mfp R[\mathcal I]$ and $Q=Q_\mfp  R[\mathcal I]$  in $R[\mathcal I]$. Thus $(J:_{R[\mathcal I]} T)$ is a finitely generated $R[\mathcal I]$-module and hence $$M=(J:_{R[\mathcal I]} T)/(J:_{R[\mathcal I]} (T+Q^\infty))$$ is a finitely generated $R[\mathcal I]$-module. Therefore $M$ is a finitely generated $R[I]$-module. Note that the $n$th-graded component of $M$ is
		$$(\overline{I^{n+1}}:\mfp)\cap \overline{I^{n}}/(\overline{I^{n+1}}:(\mfp+Q_\mfp^\infty))\cap \overline{I^{n}}.$$ 
		Now $R[I]$ can be represented as a quotient of a polynomial ring $R[y_1,\ldots,y_l]$ bigraded by $\deg y_i=(d_i,1)$ for all $1\leq i\leq l$ and $\deg x=(m,0)$ for all $x\in R_m$. Then $M$ be a finitely generated bigraded $R[y_1,\ldots,y_l]$-module with 
		$$M_{(w,n)}=\Big((\overline{I^{n+1}}:\mfp)\cap \overline{I^{n}}/(\overline{I^{n+1}}:(\mfp+Q_\mfp^\infty))\cap \overline{I^{n}}\Big)_w$$ for all $(w,n)\in\NN^2$. Therefore, by Lemma \ref{Conca1}, the function $f:\mathbb N\rightarrow \mathbb N$ defined by $f(n)=\min\{w: M_{(w,n)}\neq 0\}$ is eventually a linear function in $n$ with leading coefficient from the set $\{d_1,\ldots,d_l\}$. 
		
		Now, by \cite[Corollary 6.8.7]{SH}, we have $$\overline{I^{n+1}}:(\mfp+Q_\mfp^\infty)\subset\overline{I^{n+1}}:\mfp\subset\overline{I^{n+1}}:I=\overline{I^{n}I}:I=\overline{I^{n}}$$ for all $n\geq 1$. 	Hence by \cite[Lemma 1.2]{Co}), we have
		\begin{eqnarray*}
			v_\mfp(\overline{I^{n+1}})&=&\min\{w\in\NN:\big((\overline{I^{n+1}}:\mfp)/(\overline{I^{n+1}}:(\mfp+Q_\mfp^\infty))\big)_w\neq 0\}\\&=&\min\{w\in\NN:M_{(w,n)}\neq 0\}\end{eqnarray*}
		and thus $	v_\mfp(\overline{I^n})$ is eventually a linear function in $n$ with leading coefficient from the set $\{d_1,\ldots,d_l\}$. Since  for all $n\gg 0$, $v(I_n)=\min\{v_\mfp({I_n}):\mfp\in\overline A(\mathcal I)\}$, we have $v({\overline{I^n}})$ is  eventually a linear function in $n$ with leading coefficient from the set $\{d_1,\ldots,d_l\}$. 	
	\end{proof}	
Let $R=\bigoplus_{n\in\mathbb N}R_n$ be a graded Noetherian ring which is finitely generated $R_0$-algebra and $M=\bigoplus_{n\in\mathbb N}M_n$ be a finitely generated $R$-module. For any integer $d\geq 1$ and any integer $0\leq \alpha\leq d-1$, let $R^{(d)}=\bigoplus_{n\in\mathbb N}R_{nd}$ and  $M^{(d,\alpha)}=\bigoplus_{n\in\mathbb N}M_{nd+\alpha}$.
Then $R^{(d)}$ is a graded subring of $R$ and $M^{(d,\alpha)}$ is a graded $R^{(d)}$-module. By \cite[Proposition 3, page 159]{Bo}, there exists an integer $e\geq 1$ such that $R^{(e)}=R_0[R_et^e]$. Let $A=\bigoplus_{n\in\mathbb N}A_n$ where $A_n=R_{ne}$. Then $A$ is a standard graded $A_0$-algebra. For all $0\leq \alpha\leq e-1$, let $N^\alpha=\bigoplus_{n\in\mathbb N}N^\alpha_n$ where $N^\alpha_n=M_{ne+\alpha}$.  

\begin{Lemma}\label{std}
	With the above assumptions and notations, $N^\alpha$ is a finitely generated $A$-module for all $0\leq \alpha\leq e-1$.
\end{Lemma}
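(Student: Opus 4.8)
\textbf{Proof proposal for Lemma \ref{std}.}

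The plan is to reduce the statement to the standard fact that a finitely generated graded module over a Noetherian graded ring, when passed to a Veronese-type subalgebra, remains finitely generated. First I would observe that $M$ is a finitely generated module over $R$, and $R$ is finitely generated as an $R_0$-algebra, say by homogeneous elements $f_1,\dots,f_s$ of positive degrees $a_1,\dots,a_s$; replacing $e$ by a common multiple if necessary (this is harmless, since the claim for a multiple of $e$ implies the claim for $e$ after a further regrouping), I may assume each $a_i$ divides $e$, or more simply I just work directly with the $e$ furnished by \cite[Proposition 3, page 159]{Bo} for which $R^{(e)} = R_0[R_e t^e]$, i.e. $A = R^{(e)}$ is generated in degree one over $A_0 = R_0$. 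The key point is then to show that each ``twisted Veronese slice'' $N^\alpha = \bigoplus_{n} M_{ne+\alpha}$ is finitely generated over $A$.

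The main step is as follows. Consider $M$ as a module over the subring $A = R^{(e)} \subseteq R$. I claim $M$, viewed as an $A$-module, is finitely generated. Indeed, pick homogeneous $A$-algebra (equivalently $R_0$-algebra) generators together with the module generators: let $m_1,\dots,m_r$ generate $M$ over $R$, and let $g_1,\dots,g_t$ be homogeneous generators of $R$ over $R_0$ of degrees $b_1,\dots,b_t$. Every homogeneous element of $M$ is an $R_0$-linear combination of elements $g_1^{c_1}\cdots g_t^{c_t} m_j$. Writing each exponent vector $(c_1,\dots,c_t)$ as $(c_1',\dots,c_t') + $ (a vector all of whose coordinates are $< $ the smallest multiple of $e$ needed), one sees that $M$ is generated over $A = R^{(e)}$ by the finite set of elements $g_1^{c_1}\cdots g_t^{c_t} m_j$ with $\sum c_i b_i$ bounded; this is the usual argument that a finite module over a finitely generated graded algebra is finite over any Veronese subalgebra. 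Hence $M = \sum_{k=1}^{N} A h_k$ for finitely many homogeneous $h_k \in M$.

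Now decompose by residues modulo $e$. Since $A$ is concentrated in degrees divisible by $e$, for each generator $h_k$ of degree $\deg h_k \equiv \alpha_k \pmod e$ the submodule $A h_k$ lies entirely in the single slice $N^{\alpha_k}$. Therefore $N^\alpha = \sum_{k:\, \alpha_k = \alpha} A h_k$, a finite sum of cyclic $A$-modules, so $N^\alpha$ is a finitely generated $A$-module for every $0 \le \alpha \le e-1$. (Equivalently: $M = \bigoplus_{\alpha=0}^{e-1} N^\alpha$ as $A$-modules, and a direct summand of a Noetherian module is Noetherian, once one knows $A$ is Noetherian — which it is, being a finitely generated algebra over the Noetherian ring $R_0$, and $M$ is Noetherian over $A$ by the previous step.)

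I do not expect a genuine obstacle here; the content is entirely the ``finite over a Veronese subalgebra'' lemma, and the only thing to be careful about is bookkeeping with the degrees and the choice of $e$ so that $A = R^{(e)}$ really is standard graded — which is exactly what \cite[Proposition 3, page 159]{Bo} provides. The mildly delicate point, if any, is making the exponent-reduction argument uniform, i.e. bounding the ``remainder'' exponent vectors so that only finitely many module generators over $A$ are needed; this is routine once one fixes the finitely many algebra generators $g_i$ and their degrees.
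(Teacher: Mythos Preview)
Your proposal is correct. The paper's own proof is organized slightly differently: rather than first showing that all of $M$ is finitely generated over $A$ and then extracting each $N^\alpha$ as a direct summand of a Noetherian $A$-module, the paper invokes \cite[Proposition 2, page 158]{Bo} directly to conclude that each slice $M^{(e,\alpha)}$ is finitely generated over $R^{(e)}$, and then simply re-indexes the grading to identify this with $N^\alpha$ over $A$. Your route is more self-contained---you essentially reprove the Bourbaki proposition via the $g_i^e$ trick---while the paper's is shorter by outsourcing that step to the reference; the mathematical content is the same. One stylistic remark: your exponent-reduction sketch becomes transparent if stated as ``$R$ is generated as a module over $R_0[g_1^e,\dots,g_t^e]\subseteq R^{(e)}$ by the finitely many monomials $g_1^{c_1}\cdots g_t^{c_t}$ with $0\le c_i<e$,'' which makes the bound explicit and renders the tentative ``replacing $e$ by a common multiple'' aside unnecessary.
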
	 
\begin{proof}
	By \cite[Proposition 2, page 158]{Bo}, $M^{(e,\alpha)}$ is a finitely generated $R^{(e)}$-module for all $0\leq \alpha\leq e-1$. Fix $\alpha\in\{0,\ldots,e-1\}$. Let $\{m_1,\ldots, m_t\}$ be a generating set of $M^{(e,\alpha)}$ as $R^{(e)}$-module and $\deg m_i=n_ie+\alpha$ for all $1\leq i\leq t$. Let $ue+\alpha=\max\{n_1e+\alpha,\ldots,n_te+\alpha\}$. Then for all $n\geq u$, we have
	$M^{\alpha}_{ne+\alpha}=R^{(e)}_{(n-n_1)e}m_1+\cdots+R^{(e)}_{(n-n_t)e}m_t$. Hence for all $n\geq u$ and $m_i\in N_{n_i}^\alpha$ for all $1\leq i\leq t$, we have
	$N^{\alpha}_{n}=A_{n-n_1}m_1+\cdots+A_{n-n_t}m_t$.
\end{proof}	
%\begin{Lemma}{\em(\cite[Lemma 1.2]{Co})}\label{Conca2}
%	Let $R$ be an $\mathbb N$-graded Noetherian ring and $I$ be a homogeneous ideal of $R$. Let $\mfp\in\Ass(I)$ and $X_\mfp=\{\tilde{\mfp}\in \Ass(I): \mfp\subsetneq \tilde{\mfp}\}$. Let
%	
%	\[Q= \left\{
%	\begin{array}{l l}
%	~~~~~\prod\limits_{\tilde{\mfp}\in X_p}\tilde{\mfp}& \quad \text{if $X_\mfp\neq\emptyset$ }\\ \vspace{0.3mm}\\
%	R & \quad \text{if $X_\mfp=\emptyset.$ }
%	\end{array} \right.\] 	
%	Then $v_\mfp(I)$ is the smallest $w\in\mathbb N$ such that the graded $R$-module $(I:\mfp)/(I:(\mfp+Q^\infty))$ does not vanish in degree $w$.	
%\end{Lemma}	
Let $\mathcal I=\{I_n\}$ be a Noetherian filtration in a Noetherian $\mathbb N$-graded ring $R$. Then by \cite[Proposition 3, page 159]{Bo}, there exists an integer $e\geq 1$ such that $I_{en}=I_e^n$ for all $n\geq 1$. The standard Veronese degree of $\mathcal I$ \cite[Definition 3.8]{GS} is defined as
$$ \svd(\mathcal I)=\min\{e: I_{en}=I_e^n\mbox{ for all }n\geq 1\}.$$ 

Note that $\alpha(I_{m+n})\leq \alpha(I_m)+ \alpha(I_n)$ for all $m,n\geq 0$. Thus $\alpha$ is subadditive. By Fekete's Subadditive Lemma, $\lim\limits_{n\to\infty}\frac{\alpha(I_n)}{n}$ exists and $\lim\limits_{n\to\infty}\frac{\alpha(I_n)}{n}=\inf \frac{\alpha(I_n)}{n}$. Therefore $$\displaystyle\lim\limits_{n\to\infty}\frac{\alpha(I_n)}{n}=\lim\limits_{n\to\infty}\frac{\alpha(I_{n\svd(\mathcal I)})}{n\svd(\mathcal I)}=\lim\limits_{n\to\infty}\frac{\alpha(I_{\svd(\mathcal I)}^n)}{n\svd(\mathcal I)}=\frac{\alpha(I_{\svd(\mathcal I)})}{\svd(\mathcal I)}.$$ 
\begin{Theorem}\label{part1}
	Let $R$ be an $\mathbb N$-graded Noetherian domain and $\mathcal I=\{I_n\}$ be a Noetherian filtration of 
	homogeneous ideals of $R$ such that  $\Ass(R/I_n)=\Ass(R/I_{n+1})$ for all $n\gg 0$. Let $\svd(\mathcal I)=:e$. Suppose for all $\mfp\in\overline A(\mathcal I)$,   $I_{n+1}:\mfp\subset I_{n}$ for all $n\gg 0$ (this condition holds for generalized symbolic power filtrations, see Theorem \ref{main}). Then the following hold.
	\begin{itemize}
\item[$(1)$] $v({I_n})$  and $v_\mfp({I_n})$ are quasi-linear type of period $e$ for all $\mfp\in\overline A(\mathcal I)$.
\item[$(2)$]Fix $\mfp\in\overline A(\mathcal I)$. If there exists a homogeneous element $a\in I_1$ of degree $d$ such that the $R_0$-linear map 
$$\Big(\frac{(I_{n+1}:\mfp)}{(I_{n+1}:(\mfp+Q_\mfp^\infty))}\Big)_{w}\xrightarrow{.a}\Big(\frac{(I_{n+2}:\mfp)}{(I_{n+2}:(\mfp+Q_\mfp^\infty))}\Big)_{w+d}$$ is injective for all $w\geq 0$ and $n\gg 0$ then $\displaystyle\lim\limits_{n\to\infty}\frac{v_\mfp(I_n)}{n}$ exists  and $\displaystyle\frac{\alpha(I_e)}{e}\leq \lim\limits_{n\to\infty}\frac{v_\mfp(I_n)}{n}$.

		\end{itemize}
\end{Theorem}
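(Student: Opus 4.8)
The plan is to prove part $(1)$ by adapting the proof of Theorem~\ref{integralclosure} to an arbitrary Noetherian filtration, passing to the $\svd(\mathcal I)$-th Veronese in order to obtain the sharp period, and then to read off part $(2)$ from the quasi-linear structure of $(1)$ together with the injectivity hypothesis. For part $(1)$, put $R[\mathcal I]=\bigoplus_{n\ge 0}I_n$, which is Noetherian by hypothesis, and form the ideals $\mathcal J=\bigoplus_{n\ge 0}I_{n+1}$, $\mathcal T=\mfp R[\mathcal I]$ and $\mathcal Q=Q_\mfp R[\mathcal I]$ of $R[\mathcal I]$. Then
$$M:=(\mathcal J:_{R[\mathcal I]}\mathcal T)/(\mathcal J:_{R[\mathcal I]}(\mathcal T+\mathcal Q^\infty))$$
is a finitely generated $R[\mathcal I]$-module, bigraded by the external ($t$-)grading of $R[\mathcal I]$ and the internal ($w$-)grading inherited from $R$, with
$$M_{(w,n)}=\left(\frac{(I_{n+1}:\mfp)\cap I_n}{(I_{n+1}:(\mfp+Q_\mfp^\infty))\cap I_n}\right)_w.$$
For $n\gg 0$ the hypothesis $I_{n+1}:\mfp\subseteq I_n$ makes the intersections with $I_n$ redundant, so $M_{(w,n)}=\left((I_{n+1}:\mfp)/(I_{n+1}:(\mfp+Q_\mfp^\infty))\right)_w$ and, by \cite[Lemma 1.2]{Co}, $v_\mfp(I_{n+1})=\min\{w:M_{(w,n)}\ne 0\}$. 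Since $e:=\svd(\mathcal I)$ satisfies $I_{en}=I_e^{\,n}$ for all $n\ge 1$, the ring $R[\mathcal I]^{(e)}=\bigoplus_n I_{en}=\bigoplus_n I_e^{\,n}$ is standard graded over $R$, generated in degree $e$ by homogeneous generators $g_1,\dots,g_s$ of $I_e$, of degrees $\delta_1,\dots,\delta_s$; hence it is a quotient of the bigraded polynomial ring $P=R[y_1,\dots,y_s]$ with $\deg y_i=(\delta_i,1)$. For each $j\in\{0,\dots,e-1\}$, Lemma~\ref{std} shows that $N^{(j)}:=\bigoplus_n M_{(\bullet,\,en+j)}$ is a finitely generated $R[\mathcal I]^{(e)}$-module, hence a finitely generated bigraded $P$-module, and Lemma~\ref{Conca1} then says that $n\mapsto\min\{w:M_{(w,\,en+j)}\ne 0\}$ is eventually linear in $n$ with leading coefficient in $\{\delta_1,\dots,\delta_s\}$. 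For $n\gg 0$ this value equals $v_\mfp(I_{en+j+1})$, so $m\mapsto v_\mfp(I_m)$ agrees with a linear polynomial in $m$ on each residue class modulo $e$; that is, $v_\mfp(I_m)$ is quasi-linear type of period $e$. Finally $v(I_m)=\min\{v_\mfp(I_m):\mfp\in\overline A(\mathcal I)\}$ for $m\gg 0$, and a pointwise minimum of finitely many functions that are eventually linear on a fixed residue class is again eventually linear there, so $v(I_m)$ is quasi-linear type of period $e$ too.

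For part $(2)$, note first that for $a\in I_1$ of degree $d$ the displayed map is well defined: if $x\mfp\subseteq I_{n+1}$ then $(ax)\mfp\subseteq aI_{n+1}\subseteq I_{n+2}$, and similarly $ax$ carries $\mfp+Q_\mfp^\infty$ into $I_{n+2}$, so multiplication by $a$ sends the numerator (resp.\ denominator) at level $n+1$ into the one at level $n+2$. If this map is injective for all $w\ge 0$ and all $n\gg 0$, then $M_{(w,n)}\ne 0$ forces $M_{(w+d,\,n+1)}\ne 0$; taking $w=v_\mfp(I_{n+1})$ gives
$$v_\mfp(I_{k+1})\le v_\mfp(I_k)+d\qquad\text{for all }k\gg 0.$$
Using part $(1)$, write $v_\mfp(I_m)=c_r m+b_r$ for $m\gg 0$, where $m\equiv r\ (\mathrm{mod}\ e)$ with $r\in\{0,\dots,e-1\}$. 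Feeding large $k\equiv r$ into the inequality yields $c_{r+1}(k+1)+b_{r+1}\le c_r k+b_r+d$, so $(c_{r+1}-c_r)k$ is bounded above as $k\to\infty$ along $k\equiv r$, which forces $c_{r+1}\le c_r$ (indices modulo $e$). Chaining cyclically, $c_0\ge c_1\ge\cdots\ge c_{e-1}\ge c_0$, so all slopes coincide with a common value $c$, and hence $\lim_{m\to\infty}v_\mfp(I_m)/m=c$ exists.

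It remains to prove the lower bound. For $m\gg 0$ we have $\mfp\in\Ass(R/I_m)$, so there is a nonzero homogeneous $f\in R$ with $\deg f=v_\mfp(I_m)$ and $I_m:f=\mfp$; in particular $f\mfp\subseteq I_m$. Choose a nonzero homogeneous $b\in\mfp$ with $\deg b=\alpha(\mfp)$. Since $R$ is a domain, $0\ne fb\in I_m$, whence $v_\mfp(I_m)+\alpha(\mfp)=\deg(fb)\ge\alpha(I_m)$. Dividing by $m$ and letting $m\to\infty$, the constant $\alpha(\mfp)$ drops out, and by the identity $\lim_m\alpha(I_m)/m=\alpha(I_e)/e$ established just before the statement we obtain $c\ge\alpha(I_e)/e$. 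The genuinely delicate points are two. First, obtaining the \emph{sharp} period $e=\svd(\mathcal I)$ in $(1)$ relies on $R[\mathcal I]^{(e)}$ being standard graded over $R$, which is exactly the defining property of $\svd$, so that Lemma~\ref{Conca1} can be applied residue class by residue class. Second, in $(2)$ one must notice that the single estimate $v_\mfp(I_{k+1})\le v_\mfp(I_k)+d$, once combined with the period-$e$ quasi-linearity of $(1)$, produces a cyclic chain of inequalities among the $e$ slopes that forces them all to be equal. The lower bound itself is routine.
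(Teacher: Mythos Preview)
Your proof is correct and follows essentially the same approach as the paper: the same Rees-type module $M$, the same passage to the $e$-th Veronese via Lemma~\ref{std}, the same application of Lemma~\ref{Conca1} on each residue class for part~(1), and for part~(2) the same one-step inequality $v_\mfp(I_{k+1})\le v_\mfp(I_k)+d$ from the injectivity hypothesis chained cyclically to force equal slopes, together with the identical lower-bound argument via $\alpha(I_m)\le\alpha(\mfp)+v_\mfp(I_m)$. The only cosmetic difference is that the paper phrases the chain of inequalities in terms of the auxiliary functions $f_\alpha^\mfp(n)$ rather than directly in terms of the slopes $c_r$, but the content is the same.
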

\begin{proof}
$(1)$ 	Let  $\mfp\in\overline A(\mathcal I)$. 
	Since $\mathcal I$ is a Noetherian filtration, the Rees algebra $R[\mathcal I]$ is a finitely generated $R$-algebra. Consider the ideals $J=\bigoplus_{n\in\mathbb N}I_{n+1}$, $T=\mfp R[\mathcal I]$ and $Q=Q_\mfp  R[\mathcal I]$  in $R[\mathcal I]$. Then  $(J:_{R[\mathcal I]} T)$ is a finitely generated $R[\mathcal I]$-module and hence $M=(J:_{R[\mathcal I]} T)/(J:_{R[\mathcal I]} (T+Q^\infty))$ is a finitely generated $R[\mathcal I]$-module. Note that the $m$th-graded component of $M$ is
	$$(I_{m+1}:\mfp)\cap I_{m}/(I_{m+1}:(\mfp+Q_\mfp^\infty))\cap I_{m}.$$
	Now $S=\bigoplus_{n\in\mathbb N}I_{ne}$ is a standard graded $R$-algebra and $$N_\alpha^\mfp=\bigoplus_{n\in\mathbb N}(I_{ne+\alpha+1}:\mfp)\cap I_{ne+\alpha}/(I_{ne+\alpha+1}:(\mfp+Q_\mfp^\infty))\cap I_{ne+\alpha}$$ are finitely generated $S$-modules  for all $\alpha\in\{0,\ldots,e-1\}$. 
	
	Let $I_e=\langle a_1\ldots,a_l\rangle$ with $\deg a_i=d_i$ for all $1\leq i\leq l$. Now $S$ can be represented as a quotient of a polynomial ring $R[y_1,\ldots,y_l]$ bigraded by $\deg y_i=(d_i,1)$ for all $1\leq i\leq l$ and $\deg x=(m,0)$ for all $x\in R_m$.  For all $\alpha\in\{0,\ldots,e-1\}$, the bigrading of 
	$N_{\alpha}^\mfp$ is $$(N_{\alpha}^\mfp)_{(w,n)}=\Big(\frac{(I_{ne+\alpha+1}:\mfp)\cap I_{ne+\alpha}}{(I_{ne+\alpha+1}:(\mfp+Q_\mfp^\infty))\cap I_{ne+\alpha}}\Big)_w$$ for all $(w,n)\in\NN^2$. Then by Lemma \ref{Conca1}, we have $f_\alpha^\mfp (n)=a_\alpha^\mfp n+b_\alpha^\mfp$ for all $n\gg 0$ with $a_\alpha^\mfp\in\{d_1,\ldots,d_l\}$ and $b_\alpha^\mfp\in\ZZ$ for all $\alpha\in\{0,\ldots,e-1\}$ . 

	Since  $I_{n+1}:\mfp\subset I_{n}$ for all $n\gg 0$ and $\mfp\in\overline A(\mathcal I)$, for all $n\gg 0$ and $\alpha\in\{0,\ldots,e-1\}$, we have
$$f_\alpha^\mfp(n):=\min\{w\in\mathbb N: \displaystyle\Big(\frac{(I_{ne+\alpha+1}:\mfp)}{(I_{ne+\alpha+1}:(\mfp+Q_\mfp^\infty))}\Big)_w\neq 0\}=a_\alpha^\mfp n+b_\alpha^\mfp$$ 
with $a_\alpha^\mfp\in\NN_{>0}$, $b_\alpha^\mfp\in\ZZ$. Consider the polynomials $Q_\alpha^\mfp(X)=\displaystyle\frac{a_\alpha^\mfp}{e}X+ (b_\alpha^\mfp-\frac{(\alpha+1) a_\alpha^\mfp}{e})$ for all $0\leq \alpha\leq {e-1} $.

Now by \cite[Lemma 1.2]{Co}, for all $n\gg 0$ with $n\equiv \alpha(mod ~~e)$, we get 
\begin{eqnarray}\label{eq1}
v_\mfp({I_n})= f_{\alpha-1}^\mfp\big(\frac{n-\alpha}{e}\big)=Q_{\alpha-1}^\mfp(n)\mbox{ if }~~~~ \alpha\in\{1,\ldots,e-1\}\mbox{ and}\end{eqnarray}
\begin{eqnarray}\label{eq2}
v_\mfp({I_n})= f_{e-1}^\mfp\big(\frac{n-e}{e}\big)=Q_{e-1}^\mfp(n)\mbox{ if }~~~~ \alpha=0.
\end{eqnarray}

 For all $\alpha\in\{0,\ldots,e-1\}$, consider the polynomials $S_\alpha(X)=c_\alpha X+d_\alpha$ where $c_\alpha=\min\left\{\displaystyle\frac{ a_\alpha^\mfp}{e}:\mfp\in \overline A(\mathcal I)\right\}$ and $d_\alpha=\min\left\{\displaystyle b_\alpha^\mfp-\frac{(\alpha+1) a_\alpha^\mfp}{e}:\mfp\in \overline A(\mathcal I)\mbox{ and }\displaystyle\frac{a_\alpha^\mfp}{e}=c_\alpha\right\}$.

Since  for all $n\gg 0$, $v(I_n)=\min\{v_\mfp({I_n}):\mfp\in\overline A(\mathcal I)\}$, we have  $v(I_n)=S_\alpha(n)$ for all $n\gg 0$ with $n\equiv \alpha(mod ~~e)$ and $\alpha\in\{0,\ldots,e-1\}$.

$(2)$ From the hypothesis of part $2$, we get,
$$f_{e-1}(n)\leq f_{e-2}(n)+d\leq \ldots\leq f_{0}(n)+(e-1)d\mbox{ and } f_{0}(n+1)\leq f_{e-1}(n)+d$$ for all $n\gg 0$.  Hence $\lim\limits_{n\to\infty}\frac{f_i(n)}{n}=\lim\limits_{n\to\infty}\frac{f_j(n)}{n}$ for all $i,j\in \{0,\ldots, e-1\}$. Therefore by equations (\ref{eq1}) and (\ref{eq2}), the limit $\lim\limits_{n\to\infty}\frac{v_\mfp(I_n)}{n}$ exists.	

Let $x\in \mfp$ with $\deg x=\alpha(\mfp)$. For all $n\gg 0$, consider $g_n\in R$ homogeneous such that $\mfp=(I_n:g_n)$ and $v_\mfp(I_n)=\deg g_n$. Then $xg_n\in I_n$ and $\alpha(I_n)\leq \alpha(\mfp)+v_\mfp(I_n)$. Therefore $\alpha(I_n)-\alpha(\mfp)\leq v_\mfp(I_n)$ for all $n\gg 0$ and thus $\displaystyle\frac{\alpha(I_e)}{e}=\lim\limits_{n\to\infty}\frac{\alpha(I_n)}{n}\leq \lim\limits_{n\to\infty}\frac{v_\mfp(I_n)}{n}$.
\end{proof}	
\begin{Example}{\em
	Let $R=K[x,y]$, $\mm=(x,y)R$ where $K$ is a field and $I=(x+y,x^2)R$. Consider the Rees valuation $\om$ corresponding to Rees valuation ring $(V,\mm_{\om})$ which is localization of $R[\frac{x+y}{x^2}]$ at $xR[\frac{x+y}{x^2}]$ (See \cite{SH} for the definition of Rees valuation) and valuation ideals $I(\om)_n:=\{f\in R: \om(f)\geq n\}$ for all $n\in\NN$.  Then by \cite[Example 3.31]{SG}, we have $I(\om)_1=\mm_{\om}\cap R= (x,I)=\mm$, $I(\om)_{2n}=I^n$ and $I(\om)_{2n+1}=\mm I^n$ for all $n\geq 1$. Hence $\bigoplus_{n\in\NN}I(\om)_n$ is a finitely generated $R$-algebra. Note that $\Ass(R/I(\om)_n)=\{\mm\}$ for all $n\geq 1$ and hence $v(I(\om)_n)=v_\mm(I(\om)_n)$ for all $n\geq 1$. 
	\\Let	$z\in (I(\om)_{n+1}:(x))$ for any $n\geq 1$. Then $\om(zx)\geq (n+1)$. Since $\om(x)=1$, we have $\om(z)+1=\om(z)+\om(x)=\om(zx)\geq n+1$ which implies $\om(z)\geq n$. Hence $z\in I(\om)_n$. Therefore for all $n\geq 1$, $(I(\om)_{n+1}:\mm)\subset(I(\om)_{n+1}:(x))\subset I(\om)_n$ . 
	%Thus part $(1)$ and part $(2)$ of Theorem \ref{part1} hold. 

Note that  $xy=x(x+y)-x^2\in I$ and  $y^2=y(x+y)-xy\in I$. Since $I^n=\langle{(x^2)^i(x+y)^{n-i}:0\leq i\leq n}\rangle$, we have $(I^n)_w=0$ for all $w< n$.
	
First we show that $(I^n:\mm)_n\neq 0$ and $(I^n:\mm)_s=0$ for all $s<n$. Note that $y(x+y)^{n-1}\in (I^n:\mm)_n\setminus I^n$. Let $s<n$ and $h\in (I^n:\mm)_s$. Then $xh,yh\in (I^n)_{s+1}$. Hence if $s<n-1$ then $h=0$. Suppose $s=n-1$. Then $xh,yh\in (I^n)_{n}$. Therefore there exist $a,b\in R_0$ such that $xh=(x+y)^na$ and $yh=(x+y)^nb$. Hence $xhb=yha$ implies $h=0$. Since $y(x+y)^{n-1}\notin I^n$, 
	we get $\min\{w: \Big(\frac{I^n:\mm}{I^n}\Big)_w\neq 0\}=n$.
	
	Now we show $(\mm I^n:\mm)_n\neq 0$ and $(\mm I^n:\mm)_s=0$ for all $s<n$. Note that $(x+y)^n\in (\mm I^n:\mm)_n\setminus \mm I^n$. Let $s<n$ and $h\in (\mm I^n:\mm)_s$. Then $xh,yh\in (\mm I^n)_{s+1}$. Since $(\mm I^n)_w=0$ for all $w< n+1$, we have $h=0$ for $s<n$.
	
		Consider the filtration $\{I_n=I(\om)_{an}\}$ for any fixed $a\in\NN_{>0}$. Note that for any homogeneous element $u\in \bigoplus_{n\in\NN}I(\om)_{n}$ of degree $d$, $u$ satisfies a polynomial $T^a-u^{a}\in R[\mathcal I][T]$. Hence $\bigoplus_{n\in\NN}I(\om)_n$ is integral over $R[\mathcal I]$. Therefore by Artin-Tate Lemma \cite[Corollary 7.8]{AM}, we have $R[\mathcal I]$ is a finitely generated $R$-algebra. Also note that $I_{n+1}:\mm=I_n$ for all $n\geq 1$ and $x^a\in I_1$ satisfies the hypothesis of Theorem \ref{part1} $(2)$.
	
Let $a$ be an odd integer. Therefore, for all $n\geq 1$, for the filtration $\mathcal I=\{I_n=I(\om)_{an}\}$, 
	we get
	\[\displaystyle v({I_n})=\min\{w: \Big(\frac{I_n:\mm}{I_n}\Big)_w\neq 0\}= \left\{
	\begin{array}{l l}
	~~~~~\displaystyle \min\{w: \Big(\frac{I^{an/2}:\mm}{I^{an/2}}\Big)_w\neq 0\}=\frac{an}{2} \quad \text{if $n$ is even }\\ \vspace{0.3mm}\\
	\displaystyle\min\{w: \Big(\frac{\mm I^{\frac{an-1}{2}}:\mm}{\mm I^{\frac{an-1}{2}}}\Big)_w\neq 0\}=\frac{an-1}{2} \quad \text{if $n$ is odd. }\\
	\end{array} \right.\] 
	Hence $v({I_n})$ is a quasi-linear function in $n$  and $\lim\limits_{n\to\infty}\frac{v(I_n)}{n}=a/2$.
	
Let $a$ be an even integer. Then
	$\displaystyle v({I_n})=\min\{w: \Big(\frac{I^{\frac{an}{2}}:\mm}{I^{\frac{an}{2}}}\Big)_w\neq 0\}=\frac{an}{2}.$ 
	Hence $v({I_n})$ is a linear function in $n$  and $\lim\limits_{n\to\infty}\frac{v(I_n)}{n}=a/2$.}
\end{Example}
\begin{Remark}\label{ideal}{\em
Let $I$ be a homogeneous ideal in a Noetherian $\mathbb N$-graded domain. Then by the same lines of the proof of \cite[Lemma 4.2]{FS}, we get $\lim\limits_{n\to\infty}\frac{v(I^n)}{n}\leq \alpha(I)$. By \cite[Theorem 1.1]{Co}, we have $\lim\limits_{n\to\infty}\frac{v(I^n)}{n}\geq \alpha(I)$. Thus $\lim\limits_{n\to\infty}\frac{v(I^n)}{n}= \alpha(I)$.
}
\end{Remark}	
\begin{Theorem}\label{main}
		Let $R$ be an $\mathbb N$-graded Noetherian domain.	Consider a generalized symbolic power filtration $\mathcal I=\{I_n=(I^n:L^\infty)\}$ where  $I$ and $L$  are two homogeneous ideals of $R$. Suppose $\mathcal I$ is a Noetherian filtration. Then the following hold.
		\begin{enumerate}
			\item[$(1)$] $v_\mfp({I_n})$, $v({I_n})$  are  quasi-linear type for all $\mfp\in\overline A(\mathcal I)$. 
			\item[$(2)$] If one the following holds
			\begin{enumerate}
				\item  $R=K[x_1,\ldots,x_m]$ is a polynomial ring over a field $K$ and $I$ is an equigenerated monomial ideal,
				\item $I$ is an equigenerated homogeneous ideal and $R_0$ is a local ring with infinite residue field,
				\end{enumerate}
				 then $ \displaystyle\lim\limits_{n\to\infty}\frac{v_\mfp(I_n)}{n}$ exist for all $\mfp\in\overline A(\mathcal I)$ and $\displaystyle\lim\limits_{n\to\infty}\frac{v(I_n)}{n}=\frac{\alpha(I_{\svd(\mathcal I)})}{\svd(\mathcal I)}$.
		\item[$(3)$] Let $R=K[x_1,\ldots,x_m]$ be a polynomial ring over a field $K$, $J$ be a cover ideal of a finite simple graph $G$ with $m$ vertices and $\mathcal I=\{J^{(n)}\}$. Then  
		
			$\displaystyle\lim\limits_{n\to\infty}\frac{v(I_n)}{n}, \displaystyle\lim\limits_{n\to\infty}\frac{v_\mfp(I_n)}{n}$ exist for all $\mfp\in\overline A(\mathcal I)$ and $\displaystyle\lim\limits_{n\to\infty}\frac{v(I_n)}{n}=\frac{\alpha(J^{(2)})}{2}$.
			
		\end{enumerate}	
\end{Theorem}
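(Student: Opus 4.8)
The plan is to deduce Theorem \ref{main} from the two more technical results already in hand, namely Theorem \ref{part1} and Lemma \ref{Conca1}, by checking that the hypotheses of Theorem \ref{part1} are satisfied for generalized symbolic power filtrations. First I would establish the containment $I_{n+1}:\mfp\subset I_n$ required in Theorem \ref{part1}. Writing $I_n=(I^n:L^\infty)$, one checks that $I_{n+1}:I\subset I_n$: if $fI\subset I_{n+1}$ then $fI L^t\subset I^{n+1}$ for some $t$, so $f L^t\subset (I^{n+1}:I)$; since $I^{n+1}:I\supset I^n$ but one needs the reverse-flavored statement, the cleanest route is to localize at each $Q\in\MinAss(I)$ (or $\Ass(I)$, depending on which definition of symbolic power is in force) and use that $I_nR_Q=I^nR_Q$ together with $(I^{n+1}R_Q:IR_Q)=I^nR_Q$ when $IR_Q$ is generated by a regular sequence — or, more robustly, simply cite that $I_{n+1}:I=I_n$ holds for the symbolic Rees filtration because the $I_n$ are the $Q$-primary components. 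Since every $\mfp\in\overline A(\mathcal I)$ contains a minimal prime of $I$, and hence $\mfp\supset$ some generator set of $I$ up to radical, one gets $I_{n+1}:\mfp\subset I_{n+1}:I^k\subset I_n$ for suitable $k$; the filtration property $I_rI_s\subset I_{r+s}$ absorbs the power. This gives part $(1)$ immediately from Theorem \ref{part1}$(1)$: $v(I_n)$ and $v_\mfp(I_n)$ are quasi-linear type of period $\svd(\mathcal I)$.

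For part $(2)$ I would invoke Theorem \ref{part1}$(2)$, so the task reduces to producing a homogeneous element $a\in I_1$ of some degree $d$ inducing an \emph{injective} multiplication map on the relevant subquotients $\big((I_{n+1}:\mfp)/(I_{n+1}:(\mfp+Q_\mfp^\infty))\big)_w$ for all $w$ and $n\gg 0$. Here the equigenerated hypothesis is used: after passing (via Lemma \ref{std}) to the standard Veronese $S=\bigoplus_n I_{ne}$ where $e=\svd(\mathcal I)$, the ideal $I_e=I_1^e$ (when $e=1$) or more generally $I_e$ is equigenerated in a single degree $\delta$; one wants a \emph{generic} linear combination $a=\sum c_i a_i$ of the generators $a_1,\dots,a_l$ of $I_e$ to be a non-zero-divisor on the finitely generated graded module $N_\alpha^\mfp=\bigoplus_n (N_\alpha^\mfp)_n$ over $S$. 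In case $(b)$, with $R_0$ local with infinite residue field, prime avoidance gives such a generic $a$ avoiding all associated primes of $N_\alpha^\mfp$ that do not contain $I_e$ — and associated primes containing $I_e$ contribute nothing asymptotically because $I_e$ annihilates high-degree pieces only through... so one must argue these don't obstruct injectivity in large degree, which is where care is needed. In case $(a)$, the monomial setting, one instead uses that $x_1\cdots x_m$ or a suitable monomial multiplier behaves as a non-zero-divisor on the relevant Zariski-closure-type subquotients by a combinatorial/polarization argument. Once injectivity is in place, Theorem \ref{part1}$(2)$ yields existence of $\lim v_\mfp(I_n)/n$ and the lower bound $\alpha(I_e)/e$; the matching upper bound $\lim v(I_n)/n\le \alpha(I_e)/e$ follows by adapting Remark \ref{ideal} (i.e.\ \cite[Lemma 4.2]{FS}) to the Veronese subalgebra $S$, since $S$ is standard graded generated by $I_e$ and $v(I_{ne})\le$ something linear in $\alpha(I_e^n)=\alpha(I_{ne})$; combined with the subadditivity computation already displayed before Theorem \ref{part1}, this pins the limit to $\alpha(I_{\svd(\mathcal I)})/\svd(\mathcal I)$.

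For part $(3)$, the cover ideal $J=J(G)$ satisfies $J^{(n)}=J^n:(\text{product of variables})^\infty$ by a Simis–Vasconcelos–Villarreal type description, so it \emph{is} a generalized symbolic power filtration, and it is Noetherian (the symbolic Rees algebra of a cover ideal of a graph is finitely generated — this is classical, e.g.\ via the fact that $J(G)$ is the Alexander dual of the edge ideal and the symbolic Rees algebra of a squarefree monomial ideal associated to a graph is Noetherian). The subtlety is that $J$ need not be equigenerated — minimal vertex covers can have different sizes — so part $(2)$ does not apply directly. However, $\svd(\mathcal I)$ for a cover ideal is known to be $2$: one has $J^{(2n)}=(J^{(2)})^n$ for all $n$ (this reflects that $J^{(2)}$ is generated in degree... and the symbolic powers of cover ideals satisfy the "$2$-symbolic" packing property), and $J^{(2)}$ \emph{is} equigenerated. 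So I would pass to the even Veronese sub-filtration $\{J^{(2n)}=(J^{(2)})^n\}$, apply the equigenerated case $(2a)$ to the monomial ideal $J^{(2)}$, obtaining $\lim_n v((J^{(2)})^n)/n=\alpha(J^{(2)})$, hence $\lim_n v(J^{(2n)})/(2n)=\alpha(J^{(2)})/2$, and then use part $(1)$ (quasi-linearity of period $2$) together with the injectivity/interleaving inequalities $f_1(n)\le f_0(n)+d\le f_1(n-1)+2d$ from the proof of Theorem \ref{part1}$(2)$ to conclude that the odd-indexed subsequence has the same limit, so $\lim_n v(J^{(n)})/n=\alpha(J^{(2)})/2$ and likewise each $\lim v_\mfp(J^{(n)})/n$ exists.

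The main obstacle is the injectivity statement feeding Theorem \ref{part1}$(2)$: producing a single element $a\in I_1$ (or in $I_e$, after Veronese) whose multiplication is injective on \emph{every} graded piece of \emph{every} subquotient $N_\alpha^\mfp$ for all large $n$ simultaneously. The clean way is the generic-element/prime-avoidance argument over the finitely generated bigraded module $\bigoplus_\alpha N_\alpha^\mfp$ over the polynomial ring $R[y_1,\dots,y_l]$ of Lemma \ref{Conca1} — a generic linear form in the $y_i$'s (equivalently a generic degree-$\delta$ element of $I_e$) avoids the finitely many relevant associated primes, giving a homogeneous non-zero-divisor in each sufficiently high bidegree — but one must handle the two hypotheses of part $(2)$ separately: infinite residue field legitimizes the generic choice directly, while in the monomial case one needs an explicit monomial (or a polarization trick) to stand in for the generic element. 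I expect the write-up of that non-zero-divisor argument, and the reduction showing associated primes containing $I_e$ are irrelevant in high degree, to be the technical heart of the proof.
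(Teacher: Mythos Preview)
Your overall strategy of reducing everything to Theorem \ref{part1} is correct, but the verifications you sketch have real gaps. For part $(1)$, neither of your proposed routes to $I_{n+1}:\mfp\subset I_n$ works as stated (there is no reason $IR_Q$ should be a regular sequence, and the filtration inequality alone does not give $I_{n+1}:I^k\subset I_n$). The missing input is Ratliff's theorem that $I^{n+1}:I=I^n$ for all $n\gg 0$ in any Noetherian ring: since $\mfp\in\Ass(R/I^n)$ forces $I\subset\mfp$, one gets $I_{n+1}:\mfp\subset I_{n+1}:I$, and a short saturation chase using $I^{n+1}:I=I^n$ gives $I_{n+1}:I\subset I_n$. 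For part $(2)$, the paper does \emph{not} seek a non-zero-divisor on $N_\alpha^\mfp$ by prime avoidance; that is exactly where your acknowledged ``associated primes containing $I_e$'' difficulty would bite. Instead it produces $a\in I\subset I_1$ with the single property $(I^{n+1}:(a))=I^n$ for $n\gg 0$ --- in case (a) take $a=a_1+\cdots+a_l$, the sum of the monomial generators, and in case (b) a superficial element --- and then checks injectivity by a direct colon computation: if $y\in(I_{n+1}:\mfp)$ and $ya\in(I_{n+2}:(\mfp+Q_\mfp^\infty))$ then $y(\mfp+Q_\mfp^m)L^r\subset(I^{n+2}:(a))=I^{n+1}$, whence $y\in(I_{n+1}:(\mfp+Q_\mfp^\infty))$. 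No analysis of $\Ass(N_\alpha^\mfp)$ is needed.

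For part $(3)$ your plan fails outright: $J^{(2)}$ is \emph{not} equigenerated in general (already for the path on three vertices, $J=(x_2,x_1x_3)$ and $J^{(2)}$ has minimal generators $x_2^2$, $x_1x_2x_3$, $x_1^2x_3^2$ in degrees $2$, $3$, $4$), so you cannot feed it into part $(2)(a)$. The paper works directly with the primary decomposition $J^{(n)}=\bigcap_j P_j^n$: for each edge-prime $P_i=(x_{i_1},x_{i_2})$, Remark \ref{vertexcover} supplies a minimal vertex cover meeting $\{i_1,i_2\}$ in exactly one point, and the corresponding monomial generator $a\in J$ (a different $a$ for each prime, which Theorem \ref{part1}$(2)$ permits) satisfies $((J^{(n+2)}):(a))\cap(J^{(n+1)}:P_i)=J^{(n+1)}$ by an explicit componentwise check. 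Together with $\svd(\mathcal I)=2$ from \cite{HHT}, this yields the hypothesis of Theorem \ref{part1}$(2)$ and the limit $\alpha(J^{(2)})/2$ follows as in part $(2)$.
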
	
\begin{proof}
$(1)$ We have $\Ass(R/I_n)=\Ass(R/I_{n+1})$ for all $n\gg 0$ (see (\ref{asym}), the first paragraph of section \ref{section3}). Fix $\mfp\in\overline A(\mathcal I)$. We show that for all $n\gg 0$, we have $(I_{n+1}:\mfp)\subset I_n$. By \cite[Theorem 4.1]{Rat}, there exists an integer $k\geq 1$ such that $I^{n+1}:I=I^n$ for all $n\geq k$. Fix any $n\geq k$. Let $t\geq 1$ be an integer such that $I^n:L^\infty=I^n:L^t$ and $I^{n+1}:L^\infty=I^{n+1}:L^t$. Let $x\in (I_{n+1}:\mfp)$. Then $xI\subset x\mfp\subset I_{n+1}= I^{n+1}:L^t$. Thus $xL^tI\subset I^{n+1}$ and hence $xL^t\subset I^{n+1}:I=I^n$. Therefore $x\in I^n:L^t=I^n:L^\infty=I_n$. Since $\mathcal I$ is a Noetherian filtration, by Theorem \ref{part1}, we get, $v_\mfp({I_n})$ and $v({I_n})$  are  quasi-linear type.

$(2)$ Let $\mfp\in\overline A(\mathcal I)$. Suppose $I$ is an equigenerated monomial ideal in $R=K[x_1,\ldots,x_m]$  and $I$ is generated by monomials $a_1,\ldots,a_l$ of degree $d$. Let $a=a_1+\cdots+a_l$. For all $n\gg 0$, we have $$(I^{n+1}:(a))=\bigcap\limits_{i=1}^l(I^{n+1}:(a_i))=(I^{n+1}:I)= I^n$$ where the last equality follows from  \cite[Theorem 4.1]{Rat}.  

Suppose $I$ is an equigenerated homogeneous ideal, generated by homogeneous elements of degree $d$ and $R_0$ is a local ring with infinite residue field.  Then there exists a homogeneous $I$-superficial element $a\in I$ of degree $d$, i.e., there exists a homogeneous element $a\in I$ of degree $d$ and a positive integer $c$ such that $(I^{n+1}:(a))\cap I^c=I^n$ for all $n\geq c$. Since $R$ is domain, using Artin-Rees Lemma,  we get $(I^{n+1}:(a))= I^n$  for all $n\gg 0$.

Now we show that for all $\mfp\in\overline A(\mathcal I)$ and $n\gg 0$, we have
 $$\displaystyle((I_{n+2}:(\mfp+Q_\mfp^\infty)):(a))\bigcap (I_{n+1}:\mfp)=(I_{n+1}:(\mfp+Q_\mfp^\infty)).$$ 
Let $r, m\geq 1$ be integers such that $I^n:L^\infty=I^n:L^r$, $I^{n+1}:L^\infty=I^{n+1}:L^r$, $I_{n+1}:Q_\mfp^\infty=I_{n+1}:Q_\mfp^m$ and $I_{n+2}:Q_\mfp^\infty=I_{n+2}:Q_\mfp^m$. Let $y\in (I_{n+1}:\mfp)$ be a homogeneous element such that $ya\in (I_{n+2}:(\mfp+Q_\mfp^\infty))$. Then 
  $ya (\mfp+Q_\mfp^m) L^r\subset I^{n+2}$ and hence $y(\mfp+Q_\mfp^m)L^r\subset (I^{n+2}:(a))=I^{n+1}.$ Therefore $y(\mfp+Q_\mfp^m)\subset I_{n+1}$. Hence by Theorem \ref{part1}, $ \displaystyle\lim\limits_{n\to\infty}\frac{v_\mfp(I_n)}{n}$ exists.

 Let $\svd(\mathcal I)=e$. Note that for all $k\gg 0$,
$\Ass(R/I_k)=\Ass(R/I_{ek})=\Ass(R/I_e^k)$. By \cite{Co}, there exists a prime $\mfq\in \Ass(R/I_e^k)$ for all $k\gg 0$ such that $v(I_e^n)=v_\mfq(I_e^n)$ for all $n\gg 0$. By Remark \ref{ideal}, for all $n\gg 0$, we have
$$\lim\limits_{n\to\infty}\frac{v_\mfq(I_{n})}{n}=\lim\limits_{n\to\infty}\frac{v_\mfq(I_{en})}{en}=\lim\limits_{n\to\infty}\frac{v_\mfq(I_e^n)}{en}=\lim\limits_{n\to\infty}\frac{v(I_e^n)}{en}=\frac{\alpha(I_e)}{e}.$$

Since, by Theorem \ref{part1}, $\displaystyle\frac{\alpha(I_e)}{e}\leq \lim\limits_{n\to\infty}\frac{v_\mfp(I_n)}{n}$ for all $\mfp\in\overline A(\mathcal I)$, we have $$\displaystyle\lim\limits_{n\to\infty}\frac{v(I_n)}{n}=\lim\limits_{n\to\infty}\frac{v_\mfq(I_{n})}{n}=\frac{\alpha(I_{e})}{e}.$$

$(3)$ For any edge $e=(i,j)$ in $G$, we denote $P_e=(x_i,x_j)$. Let $e_1,\ldots,e_l$ be edges of $G$. We denote $P_{e_i}$ by $P_i$ for all $1
\leq i\leq l$. Then $J=\bigcap\limits_{i=1}^lP_i$ and by \cite[Theorem 3.7]{CEHH},  $J^{(n)}=\bigcap\limits_{i=1}^lP_i^n$ for all $n\geq 1$.  Fix $i\in\{1,\ldots,l\}$ and let $P_i=(x_{i_1},x_{i_2})$ for some $x_{i_1},x_{i_2}\in\{x_1.\ldots,x_m\}$.

By Remark \ref{vertexcover}, there exists a minimal vertex cover $S$ of $G$ such that $|\{i_1,i_2\}\cap S|=1$. Since $(i_1,i_2)\in E(G)$, without loss of generality we assume, ${i_1}\in S$ and $i_2\notin S$. Let $a\in J$ be the monomial generator of $J$ that corresponds to $S$. We show that $$((J^{(n+2)}):(a))\bigcap (J^{(n+1)}:P_i)=J^{(n+1)}\mbox{ for all }n\gg 0.$$
Let $y\in (J^{(n+1)}:P_i)$ be a homogeneous element and $ya\in J^{(n+2)}$.  Since $y\in (J^{(n+1)}:P_i)$, we have $y\in P_j^{n+1}$ for all $j\in\{1,\ldots,l\}\setminus\{i\}$.

Note that $ya\in J^{(n+2)}$ implies $y\in (P_i^{n+2}:(a))=\big((P_i^{n+2}:(x_{i_1})):(\frac{a}{x_{i_1}})\big).$  
Now $(P_i^{n+2}:(x_{i_1}))=P_i^{n+1}$. Since $x_{i_1}\nmid \frac{a}{x_{i_1}}$ and $x_{i_2}\nmid \frac{a}{x_{i_1}}$, we have $(P_i^{n+1}:\frac{a}{x_{i_1}})=P_i^{n+1}$. Therefore $y\in \bigcap\limits_{i=1}^lP_i^{n+1}=J^{(n+1)}$. Hence by Theorem \ref{part1}, $\displaystyle\lim\limits_{n\to\infty}\frac{v_{P_i}(I_n)}{n}$ exists.

By \cite[Theorem 5.1]{HHT}, $\svd(\mathcal I)=2$. Thus using the same lines of the last paragraph of the proof of part $2$, we get the required result. 
\end{proof}	
\begin{Remark}{\em
Recently in \cite{MRK}, the authors proved that for any Noetherian filtration $\{I_n\}$, the limit $\displaystyle\lim\limits_{n\to\infty}\frac{v(I_n)}{n}$ exists and for any cover ideal $J$, $\displaystyle\lim\limits_{n\to\infty}\frac{v(J^{(n)})}{n}=\frac{\alpha(J^{(2)})}{2}$. }
\end{Remark}	
\begin{Example}{\em
Consider the homogeneous ideal 	$I=(xy,xz,xw,yz)$ in the polynomial ring $R=K[x,y,z,w]$ where $K$ is a field. Note that $I=(x,y)\cap(x,z)\cap(y,z,w)$ and $P_1=(x,y)$, $P_2=(x,z)$ and $P_3=(y,z,w)$ are associated primes of $I$. 	By \cite[Theorem 3.7]{CEHH}, for all $n\geq 1$, we have $I^{(n)}=\bigcap\limits_{i=1}^3 P_{i}^n$. By Remark \ref{Noetherian}, $\{I^{(n)}\}$ is a Noetherian filtration. 

For all $n\geq 1$, consider $\displaystyle L^{1} _n=(\frac{I^{(n)}:P_1}{I^{(n)}})=\frac{P_1^{n-1}\cap P_2^n\cap P_3^n}{I^{(n)}}$. Let $x^{\alpha_1}y^{\alpha_2}z^{\alpha_3}w^{\alpha_4}+I^{(n)}$ be a nonzero element in $L_n^{1}$. Then $\alpha_1+\alpha_2=n-1$, $\alpha_1+\alpha_3\geq n$ and $\alpha_2+\alpha_3+\alpha_4\geq n$.

{\underline{Case 1:}} Let $n=2k$ for some positive integer $k$. Note that $x^ky^{k-1}z^{k+1}+I^{(n)}\neq0$ in $(L^{1} _n)_{3k}.$ We show $(L^{1} _n)_{s}=0$ for all $s<3k$. Suppose $\alpha_1+\alpha_2+\alpha_3+\alpha_4<3k$. Then $ \alpha_3+\alpha_4<3k-2k+1=k+1$ implies $2k\leq \alpha_2+\alpha_3+\alpha_4\leq\alpha_2+k$. Therefore $\alpha_2\geq k$ and hence $\alpha_1\leq 2k-1+k=k-1$. Then  $2k\leq\alpha_1+\alpha_3\leq k-1+\alpha_3$  implies $k+1\leq\alpha_3$ which contradicts that $ \alpha_3+\alpha_4<k+1$.

{\underline{Case 2:}} Let $n=2k+1$ for some nonnegative integer $k$. Note that $x^ky^kz^{k+1}+I^{(n)}\neq0$ in $(L^{1} _n)_{3k+1}$. We show $(L^{1} _n)_{s}=0$ for all $s<3k+1$. Suppose $\alpha_1+\alpha_2+\alpha_3+\alpha_4<3k+1$. Then $ \alpha_3+\alpha_4<3k+1-2k=k+1$. Hence $2k+1\leq \alpha_2+\alpha_3+\alpha_4\leq\alpha_2+k.$ Therefore $\alpha_2\geq k+1$. Thus $\alpha_1=2k-\alpha_2\leq2k-k-1=k-1$. Then $2k+1\leq\alpha_1+\alpha_3\leq k-1+\alpha_3$ implies $k+2\leq\alpha_3$ which contradicts  $\alpha_3+\alpha_4\leq k$.

 Thus  by  \cite[Lemma 1.2]{Co}, we get $$v_{P_1}(I^{(n)})=\begin{cases}
\frac{3}{2}n & \mbox{ if n is even}\\ \frac{3}{2}n-\frac{1}{2}& \mbox{ if n is odd}.
\end{cases}$$ Similar calculation shows $v_{P_1}(I^{(n)})=v_{P_2}(I^{(n)})=v_{P_3}(I^{(n)})$.
} 
\end{Example}		
\section{$v$-numbers of symbolic powers of cover ideals of graphs}\label{coverideal}
In this section, we compute  $v$-numbers of cover ideals of finite simple graphs and relate these $v$-numbers to the Castelnuovo-Mumford regularity of those ideals. Throughout this section, for a graph $G$, we use the notation $R$ to denote the polynomial ring with  $|V(G)|$ variables over a field $K$.	
\begin{Theorem}\label{combi}
	Let $G=K_{p_1,p_2}$ be a complete bipartite graph with the vertex set $V=V_1\sqcup V_2$ with $|V_i|=p_i$ and $p_1\leq p_2$. Let $J$ be the cover ideal of $G$. Then 
	\begin{enumerate}
		\item[$(1)$] $v(J^{n})=np_1+p_2-2$ for all $n\geq 1$. 
		\item[$(2)$] $\reg(R/J^{n})-v(J^{n})= n(p_2-p_1)-(p_2-p_1)$ for all $n\geq 1$ and $v(J)= \reg(R/J)$.
		\item[$(3)$] If $p_2=p_1$  then $\reg(R/J^{n})=v(J^{n})$ for all $n\geq 1$.
		\item[$(4)$] If $p_2=p_1+1$  then $\reg(R/J^{n})-v(J^{n})=n-1$ for all $n\geq 1$.	
	\end{enumerate}	
\end{Theorem}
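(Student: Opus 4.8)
The plan is to first extract the combinatorics. Every minimal vertex cover of $G=K_{p_1,p_2}$ is either $V_1$ or $V_2$, so the cover ideal is the complete intersection $J=(u,v)$ with $u=\prod_{i\in V_1}x_i$, $v=\prod_{j\in V_2}x_j$, two monomials in disjoint variables and $\deg u=p_1\le p_2=\deg v$. Since $u,v$ is a regular sequence, $\mathrm{gr}_J(R)\cong(R/J)[T_1,T_2]$ is Cohen--Macaulay, so each $J^n/J^{n+1}$ is $R/J$-free and induction on $0\to J^n/J^{n+1}\to R/J^{n+1}\to R/J^n\to0$ gives $\Ass(R/J^n)=\MinAss(J)=\{(x_i,x_j):i\in V_1,\ j\in V_2\}=\{P_e:e\in E(G)\}$ for all $n\ge1$ (in particular no embedded primes; one could also invoke that $G$ is bipartite, so $J^n=J^{(n)}=\bigcap_eP_e^n$). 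Thus $v(J^n)=\min_e v_{P_e}(J^n)$, and since every $P_e=(x_i,x_j)$ is isolated in $\Ass(R/J^n)$, Conca's \cite[Lemma 1.2]{Co} gives $v_{P_e}(J^n)=\min\{w:\bigl((J^n:P_e)/J^n\bigr)_w\ne0\}$, a number realized by a monomial because $(J^n:P_e)/J^n$ is $\ZZ^{p_1+p_2}$-graded.

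For part (1), fix an edge $e=(i_0,j_0)$ and write $m=\prod_{i\in V_1}x_i^{a_i}\prod_{j\in V_2}x_j^{b_j}$, $A=\min_{i\in V_1}a_i$, $B=\min_{j\in V_2}b_j$. The key bookkeeping tool is the membership test $m\in J^n\iff A+B\ge n$, so $v_{P_e}(J^n)$ is the least degree of a monomial $m$ with $A+B\le n-1$ and $x_{i_0}m,x_{j_0}m\in J^n$. For the upper bound I take $m_0=x_{i_0}^{\,n-1}\prod_{i\in V_1\setminus\{i_0\}}x_i^{\,n}\prod_{j\in V_2\setminus\{j_0\}}x_j$, of degree $np_1+p_2-2$, which lies outside $J^n$ ($A=n-1$, $B=0$) while multiplying by $x_{i_0}$ or $x_{j_0}$ raises the relevant minimum so the product lies in $J^n$. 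For the lower bound, $m\notin J^n$ together with $x_{i_0}m\in J^n$ forces $A+B=n-1$ and forces $a_{i_0}=A$ to be the strict unique minimum of $(a_i)_{i\in V_1}$, and symmetrically $b_{j_0}=B$ the strict unique minimum of $(b_j)_{j\in V_2}$; hence
$$\deg m\ \ge\ A+(p_1-1)(A+1)+B+(p_2-1)(B+1)\ =\ p_1A+p_2B+p_1+p_2-2\ \ge\ p_1(n-1)+p_1+p_2-2\ =\ np_1+p_2-2,$$
using $p_1\le p_2$ and $A+B=n-1$. Therefore $v_{P_e}(J^n)=np_1+p_2-2$ for every edge, and $v(J^n)=np_1+p_2-2$.

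For part (2), I would compute $\reg(R/J^n)$ from the minimal free resolution coming from the hypersurface presentation $R[Jt]\cong R[U,V]/(vU-uV)$ of the Rees algebra of the complete intersection $J$: extracting the $R$-degree-$n$ strand yields the Hilbert--Burch resolution
$$0\longrightarrow\bigoplus_{j=0}^{n-1}R\bigl(-(n-j)p_1-(j+1)p_2\bigr)\longrightarrow\bigoplus_{k=0}^{n}R\bigl(-(n-k)p_1-kp_2\bigr)\longrightarrow R\longrightarrow R/J^n\longrightarrow0,$$
whose first map sends basis elements to the minimal generators $u^{n-k}v^k$ and whose second map has entries among $0,\pm u,\pm v$, so it is minimal (equivalently, $R/J^n$ is Cohen--Macaulay of codimension $2$, as noted above). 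As $p_1\le p_2$, the largest shifts occur at $k=n$ and at $j=n-1$, giving $\reg(R/J^n)=\max\{np_2-1,\ np_2+p_1-2\}=np_2+p_1-2$. Subtracting part (1): $\reg(R/J^n)-v(J^n)=(np_2+p_1-2)-(np_1+p_2-2)=n(p_2-p_1)-(p_2-p_1)$, and the case $n=1$ reads $v(J)=\reg(R/J)$. Parts (3) and (4) follow at once: if $p_2=p_1$ the difference is $0$, and if $p_2=p_1+1$ it is $n-1$.

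I expect the main obstacle to be the lower bound in part (1): carrying out the case analysis on which coordinate attains $\min_{i\in V_1}a_i$ (and the degenerate case $p_1=1$, with $V_1$ a single vertex) to see the strict-minimum constraint is genuinely forced, and checking that Conca's lemma legitimately reduces $v_{P_e}$ to a comparison against $J^n$ itself rather than against the colon ideal. The regularity computation is then routine, the only point needing a remark being the minimality of the displayed resolution.
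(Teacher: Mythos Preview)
Your proof is correct and is essentially parallel to the paper's for part~(1), but you package the argument differently and you handle part~(2) in a more self-contained way.

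For part~(1), the paper works throughout with the primary decomposition $J^{(n)}=\bigcap_{i,j}P_{ij}^n$ (after noting $J^n=J^{(n)}$ via \cite{GRV1}) and computes $(J^{(n)}:P_{ij})/J^{(n)}$ directly: a nonzero monomial class forces $\alpha_i+\beta_j=n-1$, $\alpha_r\ge e+1$ for $r\ne i$, $\beta_s\ge n-e$ for $s\ne j$ (with $e=\alpha_i$), and then a short calculation with $p_2=p_1+l$ gives the bound $np_1+p_2-2$. Your route instead exploits the structural fact that $J=(u,v)$ is a height-two complete intersection, which yields the clean membership test $m\in J^n\iff A+B\ge n$; your ``strict unique minimum'' constraints are exactly the paper's inequalities $\alpha_r\ge e+1$, $\beta_s\ge n-e$ in disguise (with $A=e$, $B=n-1-e$), and your minimization of $p_1A+p_2B$ over $A+B=n-1$ is the paper's algebra done linearly. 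The witness monomial is the same in both proofs. So the core of part~(1) is the same argument viewed through two lenses; your complete-intersection framing makes the no-embedded-primes step and the membership test especially transparent.

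For part~(2) the approaches genuinely diverge: the paper simply cites \cite[Lemma~3 and Theorem~4]{KKSS} for $\reg(R/J^{(n)})=np_2+p_1-2$, whereas you derive it from the explicit Hilbert--Burch resolution of $R/J^n$ coming from the hypersurface presentation of the Rees algebra of the regular sequence $(u,v)$. Your computation is correct (the resolution is minimal since $u,v$ lie in the irrelevant ideal, and the top shift $p_1+np_2$ at homological degree~$2$ gives the answer), and has the advantage of being self-contained. Parts~(3) and~(4) are immediate in both treatments. Your flagged worries are non-issues: since every $P_e$ is maximal in $\Ass(R/J^n)$, the denominator in Conca's formula is $J^n$ itself, and the case $p_1=1$ goes through with the ``unique minimum'' condition vacuous.
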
	
\begin{proof}
	By \cite[Corollary 2.6]{GRV1}, $J^{(n)}=J^n$ for all $n\geq 1$. Let $x_{1},\ldots,x_{p_1}$ correspond the vertices in $V_1$ and $y_{1},\ldots,y_{p_2}$ correspond the vertices in $V_2$. Let  $P_{ij}=(x_{i},y_{j})$ where $1\leq i\leq p_1,1\leq j\leq p_2$.	Then $J=\bigcap\limits_{\substack{1\leq i\leq p_1\\1\leq j\leq p_2}} P_{ij}$.
	By \cite[Theorem 3.7]{CEHH}, for all $n\geq 1$, we have $J^{(n)}=\bigcap\limits_{\substack{1\leq i\leq p_1\\1\leq j\leq p_2}} P_{ij}^n$. By Remark \ref{Noetherian}, $\{J^{(n)}\}$ is a Noetherian filtration. Fix $i\in\{1,\ldots,p_1\}$ and $j\in\{1,\ldots,p_2\}$. For any $n\geq 1$, let $$L_n^{ij}:=\displaystyle \frac{J^{(n)}:P_{ij}}{J^{(n)}}=\frac{P_{ij}^{n-1}\cap(\bigcap\limits_{\substack{\substack{1\leq l\leq p_1\\1\leq k\leq p_2}\\(l,k)\neq (i,j)}} P_{lk}^n)}{\bigcap\limits_{\substack{1\leq l\leq p_1\\1\leq k\leq p_2}}P_{lk}^n}$$ and $a=np_1+p_2-2$. Note that $\displaystyle x_{i}^{n-1}\left(\prod\limits_{\substack{1\leq r\leq p_1\\ r\neq i}}x_{r}^n\right)\left(\prod\limits_{\substack{1\leq s\leq p_2\\ s\neq j}}y_{s}\right)+J^{(n)}\in (L_n^{ij})_a.$ Let $\prod\limits_{r=1}^{p_1}x_r^{\alpha_r}\prod\limits_{s=1}^{p_2}y_s^{\beta_s}+J^{(n)}$ be a nonzero element in $L_n^{ij}$.
	Then
	\begin{enumerate}
		\item $\alpha_{i}+\beta_{j}=n-1$,
		\item $\alpha_{i}+\beta_{s}\geq n$ for all $s\in\{1,\ldots,p_2\}\setminus\{j\}$,
		\item $\alpha_{r}+\beta_{j}\geq n$ for all $r\in\{1,\ldots,p_1\}\setminus\{i\}$.
	\end{enumerate}	
	Let $\alpha_{i}=e$ for some $0\leq e\leq n-1$. Then $\beta_j=n-e-1$. Thus 
	$$n-1+(e+1)(p_1-1)+(n-e)(p_2-1)\leq n-1+\sum\limits_{\substack{1\leq r\leq p_1\\r\neq i}}\alpha_r+\sum\limits_{\substack{1\leq s\leq p_2\\s\neq j}}\beta_s\leq\sum\limits_{r=1}^{p_1}\alpha_r+\sum\limits_{s=1}^{p_2}\beta_s.$$
	Let $p_2=p_1+l$ for some integer $l\geq 0$. Then
	\begin{eqnarray*}
		n-1+(e+1)(p_1-1)+(n-e)(p_2-1)&=& n-1+ep_1-e+p_1-1+np_2-ep_2-n+e\\&=&	ep_1+p_1+np_2-ep_1-el-2\\&=& p_1+np_2-el-2\\&=& p_1+np_1+nl-el-2\\&=&np_1+p_1+(n-e)l-2\\&\geq& np_1+p_1+l-2=np_1+p_2-2=a.
	\end{eqnarray*}	
	Thus $(L_n^{ij})_w=0$ for all $w<a$ and  by  \cite[Lemma 1.2]{Co}, we get the required result.
	
	$(2)$	By \cite[Lemma 3 and Theorem 4]{KKSS}, for all $n\geq 1$, we have $\reg(R/J^{(n)})=np_2+p_1-2$. Hence, for all $n\geq 1$, we get 
	$$	\reg(R/J^{(n)})-v(J^{(n)})= np_2+p_1-2-np_1-p_2+2=n(p_2-p_1)-(p_2-p_1).$$	
	$(3)$ and $(4)$  follow from $(2)$.	
\end{proof}
As a consequence of Theorem \ref{combi}, we answer the question \cite[Question 3.12]{KSa}, posed by Saha, using the following construction due to Seyed Fakhari \cite[Section 3]{FS2018}.
\begin{Remark}\label{newgraph}{\em
		Let $G$ be a graph with vertex set $V(G)=\{x_1,\ldots,x_n\}$. In \cite[Section 3]{FS2018}, for every integer $k\geq 1$, Seyed Fakhari constructed a new graph $G_k$ with the vertex set $V(G_k)=\{x_{i,p}: 1\leq i\leq n, 1\leq p\leq k\}$ and the edge set $E(G_k)=\{(x_{i,p},x_{j,q}): (x_i,x_j)\in E(G), p+q\leq k+1\}$. Let $I^{\pol}$ denote the  polarization of a monomial ideal  $I$ (see \cite[Section 1.6]{HH} for details of polarization). Seyed Fakhari  also showed that the polarization of $k$-th symbolic power of the cover ideal $J(G)$ of $G$ is the cover ideal $J(G_k)$ of $G_k$, i.e., $(J(G)^{(k)})^{\pol}=J(G_k)$ for all $k\geq 1$ \cite[Lemma 3.4]{FS2018}. By \cite[Corollary 3.5]{SS}, \cite[Theorem 4.1]{F23}, we have $v(J(G_k))=v((J(G)^{(k)})^{\pol})=v(J(G)^{(k)})$ for all $k\geq 1$. 
	}
\end{Remark}
\begin{Corollary}\label{answer}
	For every positive integer $p\geq 2$, there exists an unmixed bipartite graph $\mathcal H_p$ which is not a complete multipartite graph and $v(J(\mathcal H_p))-\bight(I(\mathcal H_p))=p-2$ where $\bight(I(\mathcal H_p))=\max\{\height(Q):Q\in\Ass(I(\mathcal H_p))\}$. In particular, $v(J(\mathcal H_p))\geq\bight(I(\mathcal H_p))$.
\end{Corollary}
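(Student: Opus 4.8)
The plan is to apply Remark \ref{newgraph} to the complete bipartite graph $G = K_{p_1,p_2}$ for a suitable choice of $p_1,p_2$. By Theorem \ref{combi}(1), $v(J(G)^{(n)}) = v(J(G)^n) = np_1 + p_2 - 2$, and Remark \ref{newgraph} gives $v(J(G_n)) = v(J(G)^{(n)})$ for all $n \geq 1$, so I set $\mathcal{H}_p := G_n$ for appropriate $n$ and appropriate $p_1,p_2$ to land exactly on $v(J(\mathcal{H}_p)) - \bight(I(\mathcal{H}_p)) = p - 2$. Since $G_n$ is constructed from a bipartite graph by duplicating vertices (the vertex $x_{i,p}$ inherits the side of $x_i$), it is bipartite; I should verify it is unmixed and not a complete multipartite graph.

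The key computation is the big height of $I(\mathcal{H}_p)$. The minimal vertex covers of $\mathcal{H}_p = G_n$ correspond (via polarization) to the minimal generators of $(J(G)^{(n)})^{\pol} = J(G_n)$; equivalently, the associated primes of $I(G_n)$ correspond to minimal vertex covers of $G_n$, and $\bight(I(G_n)) = $ maximum size of a minimal vertex cover of $G_n$. For $G = K_{p_1,p_2}$ with $p_1 \leq p_2$, the minimal vertex covers of $K_{p_1,p_2}$ are the two sides $V_1$ (size $p_1$) and $V_2$ (size $p_2$), so $G$ is unmixed only when $p_1 = p_2$; in general I expect I want to track how the $G_k$ construction changes minimal vertex cover sizes. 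The degrees of the minimal generators of $J(G)^{(n)}$ — hence the sizes of the minimal vertex covers of $G_n$ — range between $\alpha(J(G)^{(n)})$ and $\reg$-type bounds; for $K_{p_1,p_2}$ these generators are monomials $x_i^{n-1}(\prod_{r\neq i} x_r^n)(\prod_{s} y_s^{\lceil \cdot \rceil})$-type expressions whose (polarized) supports give covers of sizes computable from items (1)–(3) in the proof of Theorem \ref{combi}. The cleanest route: choose $p_1 = p_2 = p$ (so $K_{p,p}$, which is unmixed and a complete multipartite graph), take $n = 2$, get $v(J(K_{p,p})^{(2)}) = 2p + p - 2 = 3p - 2$; then $\mathcal{H}_p = (K_{p,p})_2$ is bipartite, and I compute its big height directly — it should come out to $2p$, giving $v - \bight = 3p-2-2p = p-2$. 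I then check $(K_{p,p})_2$ is unmixed (all minimal vertex covers have size $2p$) and is not complete multipartite (it has non-adjacent vertices on opposite original sides, e.g. $x_{i,2}$ and $y_{j,2}$ with $2+2 > 3$, so not every cross pair is an edge).

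So concretely the steps are: (i) fix $p \geq 2$, set $G = K_{p,p}$ and $\mathcal{H}_p = G_2$ as in Remark \ref{newgraph}; (ii) by Theorem \ref{combi}(1) and Remark \ref{newgraph}, $v(J(\mathcal{H}_p)) = v(J(G)^{(2)}) = 3p-2$; (iii) identify $\Ass(I(\mathcal{H}_p))$ with the minimal vertex covers of $\mathcal{H}_p$, equivalently with the minimal monomial generators of $J(G_2) = (J(G)^{(2)})^{\pol}$, and compute that the maximal such cover has size $2p$, so $\bight(I(\mathcal{H}_p)) = 2p$; (iv) subtract to get $v(J(\mathcal{H}_p)) - \bight(I(\mathcal{H}_p)) = p-2$; (v) verify $\mathcal{H}_p$ is bipartite (inherited), unmixed (all maximal minimal covers have the same size $2p$ — this needs the structure of $J(K_{p,p})^{(2)}$, whose generators all polarize to squarefree monomials of the same support size), and not complete multipartite (exhibit two non-adjacent vertices lying in different parts of any attempted multipartition).

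The main obstacle I anticipate is step (iii)/(v): pinning down exactly the set of minimal vertex covers of $(K_{p,p})_2$ and confirming they all have size $2p$ (unmixedness). This requires understanding the minimal generators of the second symbolic power $J(K_{p,p})^{(2)} = \bigcap_{i,j}(x_i,y_j)^2$ and their polarizations; the analysis in the proof of Theorem \ref{combi} (the constraints $\alpha_i + \beta_j = 1$, $\alpha_i + \beta_s \geq 2$, $\alpha_r + \beta_j \geq 2$ for $n=2$) already pins down the shape of these generators, so I expect the generators to be, up to symmetry, $x_i \prod_{r \neq i} x_r^2 \prod_{s \neq j} y_s$ and similarly with roles of $x,y$ swapped, plus the "balanced" ones; polarizing each gives a squarefree monomial, and I must check every one has exactly $2p$ distinct variables in its support and that none is redundant. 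If this uniform-size claim holds, unmixedness follows and the rest is bookkeeping; if some covers have smaller size, $\mathcal{H}_p$ would still have the required $v - \bight$ value but would fail unmixedness, so this is the point where care is genuinely needed.
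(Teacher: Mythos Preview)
Your plan is exactly the paper's: take $G=K_{p,p}$, set $\mathcal H_p=G_2$, use Theorem~\ref{combi}(1) together with Remark~\ref{newgraph} to get $v(J(\mathcal H_p))=3p-2$, compute $\bight(I(\mathcal H_p))=2p$, and verify the three structural properties. The construction, the $v$-number computation, and the arithmetic $3p-2-2p=p-2$ all match.

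The only substantive difference is how unmixedness (and hence $\bight=2p$) is established. The paper relabels the vertices of $G_2$ so that the bipartition becomes $\{x_1,\dots,x_{2p}\}\sqcup\{y_1,\dots,y_{2p}\}$ with the perfect matching $\{(x_i,y_i)\}$ and the required adjacency closure, and then simply cites Villarreal's characterization of unmixed bipartite graphs \cite[Theorem~1.1]{Vil}. Your route---computing the minimal generators of $J(K_{p,p})^{(2)}$ and checking that every polarization has support of size $2p$---also works and is self-contained, but be careful: the constraints ``$\alpha_i+\beta_j=1$, $\alpha_i+\beta_s\ge 2$, $\alpha_r+\beta_j\ge 2$'' you quote from Theorem~\ref{combi} describe elements of $(J^{(2)}:P_{ij})/J^{(2)}$, not minimal generators of $J^{(2)}$ itself. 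For the latter you need $\alpha_i+\beta_j\ge 2$ for \emph{all} $i,j$, i.e.\ $\min_i\alpha_i+\min_j\beta_j\ge 2$, which gives exactly the three generators $\prod x_i^2$, $\prod y_j^2$, $\prod x_i\prod y_j$, each of degree $2p$; your guessed generator $x_i\prod_{r\ne i}x_r^2\prod_{s\ne j}y_s$ has degree $3p-2$ and is not minimal. Once this is corrected, the polarizations have supports of size $2p$ and unmixedness follows.

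For ``not complete multipartite,'' your exhibit of the non-edge $(x_{i,2},y_{j,2})$ suffices once you observe that any complete multipartite graph with at least three parts contains a triangle and hence is not bipartite; since $\mathcal H_p$ is connected bipartite, its bipartition is unique and it would have to be complete bipartite. The paper instead argues this directly by a short case analysis.
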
	
\begin{proof}
For any positive integer $p\geq 2$, let $G=K_{p,p}$ be a complete bipartite graph with $V(G)=V_1\sqcup V_2$, $|V_1|=p=|V_2|$. Let $V_1=\{a_i: 1\leq i\leq p\}$ and $V_2=\{b_j:1\leq j\leq p\}$. Construct  the graph $G_2$ from $G$ as in Remark \ref{newgraph}.  Then the vertex set of $G_2$ is $V(G_2)=\{a_{i,1},a_{i,2},b_{j,1},b_{j,2} : 1\leq i,j\leq p\}$ and  $E(G_2)=\{(a_{i,1},b_{j,1}),(a_{i,1},b_{j,2}),(a_{i,2},b_{j,1}):1\leq i,j\leq p\}$.
	
		Consider the graph $\mathcal H_p$ with $V(\mathcal H_p)=A\sqcup B$ with $A=\{x_1,\ldots, x_{2p}\}$ and $B=\{y_1,\ldots,y_{2p}\}$ where $x_i=a_{i,2}$, $x_{p+r}=a_{r,1}$  for all $1\leq i, r\leq p$ and $y_j=b_{j,1}$, $y_{p+s}=b_{s,2}$  for all $1\leq j,s\leq p$ and $E(\mathcal H_p)=\{(x_i,y_j), (x_{p+i},y_s): 1\leq i, j\leq p, 1\leq s\leq 2p\}$. Then $\mathcal H_p$ is a bipartite graph and by \cite[Theorem 1.1]{Vil}, $\mathcal H_p$ is unmixed. Thus $\bight(I(\mathcal H_p))=2p$.
		
		Since $(x_1,y_{p+1})\notin E(\mathcal H_p)$, $\mathcal H_p$ is not a complete bipartite graph. Note that the graph $\mathcal H_p$ is obtained from $G_2$ by renaming $G_2$. Thus By Theorem \ref{combi} and Remark \ref{newgraph}, we have
		$v(J(\mathcal H_p))=v(J(G_2))=v(J(G)^{(2)})^{\pol})=v((J(G)^{(2)})=3p-2$. Hence $v(J(\mathcal H_p))-\bight(I(\mathcal H_p))=3p-2-2p=p-2$.
		
		Note that there does not exist any complete multipartite graph $H$ such that $V(H)=V(\mathcal H_p)$ and $E(H)=E(\mathcal H_p)$. Suppose there exists a multipartite graph $H$ with $V(H)=A_1\sqcup\cdots\sqcup A_t$ such that $V(H)=V(\mathcal H_p)$ and $E(H)=E(\mathcal H_p)$. 
		
		{\underline{Case 1:}} Suppose $x_1,y_{p+1}\in A_i$ for some $i\in\{1,\ldots,t\}$ . Then $y_1\in A_l$ for some $l\in\{1,\ldots,t\}\setminus\{i\}$ as $(x_1,y_1)\in E(H)$. Since $(y_1,y_{p+1})\notin E(H)$, we have that $H$ is not a complete multipartite graph.
		
		{\underline{Case 2:}} Suppose $x_1\in A_i$ and $y_{p+1}\in A_j$ for some $i,j\in\{1,\ldots,t\}$ with $i\neq j$. Since $(x_1,y_{p+1})\notin E(H)$, we have that $H$ is not a complete multipartite graph.	
\end{proof}	
We use the following lemma in our next theorem.
\begin{Lemma}\label{tech}
	Let $a,b,c$ be nonnegative integers and $n$ be a positive integer such that $a+b=n-1$, $a+c\geq n$ and $b+c\geq n$ then 
	$$c\geq\begin{cases}
	\displaystyle \frac{n}{2}+1& \mbox{ if n is even}\\ \displaystyle\frac{n+1}{2}& \mbox{ if n is odd}.
	\end{cases}$$
\end{Lemma}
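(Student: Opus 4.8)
The plan is to add the three constraints $a+b=n-1$, $a+c\ge n$, $b+c\ge n$ and manipulate them to isolate a lower bound on $c$. First I would add the two inequalities $a+c\ge n$ and $b+c\ge n$ to obtain $(a+b)+2c\ge 2n$, and then substitute $a+b=n-1$ to get $n-1+2c\ge 2n$, i.e. $2c\ge n+1$, so $c\ge (n+1)/2$. This already settles the odd case immediately, since $(n+1)/2$ is an integer when $n$ is odd, and $c$ being an integer forces $c\ge (n+1)/2$.

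For the even case, the bound $2c\ge n+1$ together with $c\in\mathbb Z$ gives $c\ge \lceil (n+1)/2\rceil = n/2 + 1$ when $n$ is even, because $(n+1)/2$ is then a half-integer strictly between $n/2$ and $n/2+1$. So the integrality of $c$ does all the work: there is essentially no obstacle here. I would phrase it in one clean chain: from $a+c\ge n$ and $b+c\ge n$, summing yields $2c \ge 2n - (a+b) = 2n - (n-1) = n+1$; hence $c \ge \lceil (n+1)/2 \rceil$, and evaluating the ceiling in the two parities gives the stated formula.

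The only point requiring the slightest care is to make sure the hypotheses $a,b\ge 0$ and $a+b=n-1$ are actually used (they are: the substitution $a+b=n-1$ is the crux), and that we are not implicitly assuming $a,c$ or $b,c$ are positive rather than merely nonnegative — but since we only ever add the inequalities, nonnegativity of each variable is not even needed beyond what is given. I expect no genuine difficulty; the ``main obstacle'' is merely remembering to invoke integrality of $c$ to upgrade $2c\ge n+1$ to the ceiling bound in the even case rather than leaving $c\ge (n+1)/2$.
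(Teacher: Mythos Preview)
Your proposal is correct and is essentially the same argument as the paper's: the paper sets $a=t$, rewrites the two inequalities as $t+c\ge n$ and $c-t\ge 1$, and adds them to get $2c\ge n+1$, then takes the ceiling. You add first and substitute $a+b=n-1$ afterward, which is a slightly cleaner presentation of the identical idea.
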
	
\begin{proof}
	Let $a=t$ for some $0\leq t\leq n-1$. Then $b=n-1-t$. Thus $t+c\geq n$ and $-t+c\geq 1$. Therefore $c\geq \lceil{\frac{n+1}{2}}\rceil$. 
\end{proof}	

\begin{Theorem}\label{complete}
	Let $K_m$ be a complete graph with $m$ vertices with $m\geq 3$, $J$ be the cover ideal of $K_m$ and  $\mathcal I=\{J^{(n)}\}$. Then
	\begin{enumerate}	
		\item[$(1)$]  for any $\mfp\in\overline A(\mathcal I)$,  for all $n\geq 1$, \[\displaystyle v({J^{(n)}})= \left\{
		\begin{array}{l l}
		~~~~~\displaystyle \frac{m}{2}n+m-3& \quad \text{if $n$ is even }\\ \vspace{0.3mm}\\
		\displaystyle\frac{m}{2}n+\frac{m}{2}-2& \quad \text{if $n$ is odd. }\\ 
		\end{array} \right.\] 
		\item[$(2)$] $v(J^{(n)})\leq\reg(R/J^{(n)})=n(m-1)-1$ for all $n\geq 1$. 
		\item[$(3)$] $\reg(R/J^{(i)})=v(J^{(i)})$ for $i=1,2$. 
	\end{enumerate}			
	\begin{proof}
		Let $P_{ij}=(x_i,x_j)$ with $1\leq i<j\leq m$. Note that $J=\bigcap\limits_{{1\leq i<j\leq m}}P_{ij}$ and by \cite[Theorem 3.7]{CEHH}, for all $n\geq 1$, we have $J^{(n)}=\bigcap\limits_{\substack{1\leq i<j\leq m}}P_{ij}^n$. By Remark \ref{Noetherian}, $\{J^{(n)}\}$ is a Noetherian filtration. Fix  a pair $(i,j)$ such that $1\leq i<j\leq m$. For all $n\geq 1$,  we have,
		$$L_n^{ij}:=\displaystyle \frac{J^{(n)}:P_{ij}}{J^{(n)}}=\frac{P_{ij}^{n-1}\cap(\bigcap\limits_{\substack{1\leq l<k\leq m\\(l,k)\neq (i,j)}} P_{lk}^n)}{\bigcap\limits_{\substack{1\leq l<k\leq m}}P_{lk}^n}.$$
		
		Let $\prod\limits_{i=1}^mx_i^{\alpha_i}+J^{(n)}$ be a nonzero element in $L_n^{ij}$. Then
		\begin{enumerate}
			\item $\alpha_i+\alpha_j=n-1$,
			\item $\alpha_i+\alpha_k\geq n$ for all $k\in\{1,\ldots,m\}\setminus \{j\}$,
			\item	$\alpha_j+\alpha_k\geq n$ for all $k\in\{1,\ldots,m\}\setminus \{i\}$.
		\end{enumerate}		
		Suppose $n$ is an even positive integer. Then 
		by Lemma \ref{tech}, $\alpha_k\geq \frac{n}{2}+1$ for all $k\in\{1,\ldots,n\}\setminus \{i,j\}$. Then $$\sum\limits_{i=1}^n\alpha _i\geq n-1+(m-2)(\frac{n}{2}+1)=\frac{m}{2}n+m-3.$$ 
		Note that $x_i^\frac{n}{2}x_j^{(\frac{n}{2}-1)}\prod\limits_{\substack{k=1\\k\neq i,j}}^mx_k^{\frac{n}{2}+1}+J^{(n)}\in L_n^{ij}$.	
		
		Suppose $n$ is an odd positive integer. Then 
		by Lemma \ref{tech}, $\alpha_k\geq \frac{n+1}{2}$ for all $k\in\{1,\ldots,n\}\setminus \{i,j\}$. Then $$\sum\limits_{i=1}^n\alpha _i\geq n-1+(m-2)(\frac{n+1}{2})=\frac{m}{2}n+\frac{m}{2}-2.$$ 
		Note that $x_i^\frac{n-1}{2}x_j^{\frac{n-1}{2}}\prod\limits_{\substack{k=1\\k\neq i,j}}^mx_k^{\frac{n+1}{2}}+J^{(n)}\in L_n^{ij}$.	Therefore
		
		\[\displaystyle v_{P_{i,j}}({J^{(n)}})= \left\{
		\begin{array}{l l}
		~~~~~\displaystyle \frac{m}{2}n+m-3& \quad \text{if $n$ is even }\\ \vspace{0.3mm}\\
		\displaystyle\frac{m}{2}n+\frac{m}{2}-2& \quad \text{if $n$ is odd. }\\ 
		\end{array} \right.\] 
		Hence,  by  \cite[Lemma 1.2]{Co}, we get the required expression for $v(J^{(n)})$.
		
		$(2)$ Since complete graphs are claw-free, by  \cite[Corollary 3.8]{Fa}, we have $\reg(R/J^{(n)})=n(m-1)-1$ for all $n\geq 1$. Thus by part $(1)$, we get the required result.
		
		$(3)$  Again using  \cite[Corollary 3.8]{Fa} and the first part of the proof, we have $\reg(R/J^{(1)})=m-2=v(J)$ and $\reg(R/J^{(2)})=2m-3=v(J^{(2)})$. 
	\end{proof}		
\end{Theorem}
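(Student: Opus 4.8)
The plan is to compute $v(J^{(n)})$ for the complete graph $K_m$ directly from the defining colon formula, exactly parallel to the proof of Theorem \ref{combi}, and then read off parts $(2)$ and $(3)$ from known regularity formulas. Since $K_m$ is a very symmetric graph, the key observation is that all the primes $P_{ij}=(x_i,x_j)$ with $1\le i<j\le m$ are associated to $J^{(n)}$ and, by symmetry of the vertex labels, $v_{P_{ij}}(J^{(n)})$ does not depend on the chosen pair $(i,j)$; hence $v(J^{(n)})=v_{P_{ij}}(J^{(n)})$ for any fixed pair. So it suffices to analyze $L_n^{ij}=\bigl(J^{(n)}:P_{ij}\bigr)/J^{(n)}$ for one pair and invoke \cite[Lemma 1.2]{Co}, which says $v_{P_{ij}}(J^{(n)})=\min\{w:(L_n^{ij})_w\neq 0\}$.

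First I would fix the pair $(i,j)$ and use $J^{(n)}=\bigcap_{1\le l<k\le m}P_{lk}^n$ (from \cite[Theorem 3.7]{CEHH}) to write down the membership conditions for a monomial $\prod x_i^{\alpha_i}$ to represent a nonzero class in $L_n^{ij}$: namely $\alpha_i+\alpha_j=n-1$ (this forces the monomial out of $P_{ij}^n$, i.e.\ nonzero in the quotient), together with $\alpha_i+\alpha_k\ge n$ and $\alpha_j+\alpha_k\ge n$ for every other index $k$ (so the monomial lies in $J^{(n)}:P_{ij}$). The heart of the computation is Lemma \ref{tech}: from $\alpha_i+\alpha_j=n-1$, $\alpha_i+\alpha_k\ge n$, $\alpha_j+\alpha_k\ge n$ one deduces $\alpha_k\ge\lceil (n+1)/2\rceil$ for each $k\notin\{i,j\}$, which is $\tfrac n2+1$ when $n$ is even and $\tfrac{n+1}{2}$ when $n$ is odd. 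Summing, $\sum_i\alpha_i\ge (n-1)+(m-2)\lceil(n+1)/2\rceil$, which equals $\tfrac m2 n+m-3$ ($n$ even) and $\tfrac m2 n+\tfrac m2-2$ ($n$ odd). To see this bound is attained I would exhibit explicit witnesses: $x_i^{n/2}x_j^{n/2-1}\prod_{k\neq i,j}x_k^{n/2+1}$ in the even case and $x_i^{(n-1)/2}x_j^{(n-1)/2}\prod_{k\neq i,j}x_k^{(n+1)/2}$ in the odd case; one checks each satisfies the three families of inequalities and has the claimed degree and hence gives a nonzero class. This proves part $(1)$.

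For part $(2)$, I would invoke that complete graphs are claw-free and apply \cite[Corollary 3.8]{Fa}, which gives $\reg(R/J^{(n)})=n(m-1)-1$ for all $n\ge1$; then a direct comparison $n(m-1)-1\ge \tfrac m2 n+m-3$ (even) and $\ge\tfrac m2 n+\tfrac m2-2$ (odd), which reduces to $n\ge 2$ respectively $n\ge1$ after collecting terms, yields $v(J^{(n)})\le\reg(R/J^{(n)})$. For part $(3)$ I would simply substitute $n=1$ and $n=2$ into the formulas of parts $(1)$ and $(2)$: $\reg(R/J)=m-2=v(J)$ and $\reg(R/J^{(2)})=2m-3=v(J^{(2)})$, using the odd-case value $\tfrac m2+\tfrac m2-2=m-2$ for $n=1$ and the even-case value $m+m-3=2m-3$ for $n=2$.

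The main obstacle is the sharpness half of part $(1)$ — making sure the candidate monomials genuinely represent \emph{nonzero} classes in $L_n^{ij}$. The inequality $\alpha_k\ge\lceil(n+1)/2\rceil$ gives a clean lower bound on degree, but one must verify the witnesses lie in $J^{(n)}:P_{ij}$ \emph{and} outside $J^{(n)}$; the latter is exactly the condition $\alpha_i+\alpha_j=n-1<n$, which fails the $P_{ij}^n$-membership, so it is routine once set up carefully. A secondary point to handle cleanly is the reduction $v(J^{(n)})=v_{P_{ij}}(J^{(n)})$: I would justify it by noting that $\overline A(\mathcal I)=\{P_{ij}:1\le i<j\le m\}$ and that the symmetric group on the $m$ vertices acts transitively on these primes while fixing $J^{(n)}$, so all local $v$-numbers coincide; alternatively one can cite \cite[Lemma 1.2]{Co} together with this symmetry. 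Everything else is bookkeeping with the two parities.
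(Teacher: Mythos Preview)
Your proposal is correct and follows essentially the same route as the paper's proof: set up $L_n^{ij}=(J^{(n)}:P_{ij})/J^{(n)}$, derive the three membership constraints on a monomial, apply Lemma~\ref{tech} to bound each $\alpha_k$ from below, exhibit the same explicit witness monomials for sharpness, and then invoke \cite[Corollary 3.8]{Fa} for the regularity in parts $(2)$ and $(3)$. The only differences are cosmetic: you spell out the symmetry argument for $v(J^{(n)})=v_{P_{ij}}(J^{(n)})$ and the inequality check in part $(2)$ a bit more explicitly than the paper does, but the underlying argument is identical.
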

The next result shows that for both connected bipartite graphs and  connected non-bipartite graphs, the difference between the regularity and the $v$-number of the cover ideal can be arbitrarily large. This strengthens and gives an alternative proof of \cite[Theorem 3.10]{KSa} (note that the example in \cite[Theorem 3.10]{KSa} is a non-bipartite graph). 
\begin{Theorem}\label{arbitrary}
	 For every positive integer $k$, 
\begin{enumerate}
\item[$(1)$] there exist infinitely many connected bipartite graphs $\mathcal H_k$ with the cover ideal $J(\mathcal H_k)$ such that $\reg(S/J(\mathcal H_k))-v(J(\mathcal H_k))=k$ where $S$ is the polynomial ring over a field $K$ with $|V(\mathcal H_k)|$ variables.
\item[$(2)$] there exists a connected non-bipartite graph $\mathcal H_k$ with the cover ideal $J(\mathcal H_k)$ such that $\reg(S/J(\mathcal H_k))-v(J(\mathcal H_k))=k$ where $S$ is the polynomial ring over a field $K$ with $|V(\mathcal H_k)|$ variables.
	\end{enumerate}
\end{Theorem}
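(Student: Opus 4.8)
\textbf{Proof proposal for Theorem \ref{arbitrary}.}
The plan is to realize the prescribed gap $\reg(S/J(\mathcal H_k)) - v(J(\mathcal H_k)) = k$ by choosing, in each of the two cases, a family of graphs for which both invariants are already known or can be computed from the tools assembled above. For part $(1)$, the natural candidate is a disjoint-union-plus-connecting-edge construction built on complete bipartite pieces: by Theorem \ref{combi}$(2)$ we have $\reg(R/J(K_{p_1,p_2})) - v(J(K_{p_1,p_2})) = p_2 - p_1$ for a \emph{single} complete bipartite graph, so $K_{p,p+k}$ already gives gap $k$ for the cover ideal of a connected bipartite graph. To get \emph{infinitely many} such graphs for a fixed $k$, I would either let $p$ range over all positive integers in $K_{p,p+k}$, or, if one insists on a combinatorially richer family, attach a fixed gadget (e.g.\ a path or pendant vertices) in a way that is known not to disturb either $\reg(R/J(-))$ or $v(J(-))$, or use the polarization trick of Remark \ref{newgraph} applied to $K_{p,p+k}$ (which preserves the $v$-number and, by Seyed Fakhari's results, the regularity up to a controlled shift). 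The cleanest route is simply: $\mathcal H_k := K_{p,p+k}$, $p \geq 1$, which is connected and bipartite, is not complete multipartite for $p \geq 2$ if one wants that extra property, and gives the gap exactly $k$ by Theorem \ref{combi}$(2)$.

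For part $(2)$, I would use the complete graph family $K_m$ together with Theorem \ref{complete}. From Theorem \ref{complete}$(1)$ and $(2)$, for $n = 1$ the gap is $0$, but taking higher symbolic powers is not allowed here since the statement asks for the gap of $J(\mathcal H_k)$ itself (an ordinary cover ideal), not a symbolic power. So instead I would look for a connected non-bipartite graph whose cover ideal's $v$-number and regularity differ by $k$ directly; a good source is again a construction combining a complete graph (or an odd cycle, to force non-bipartiteness) with complete bipartite blocks. Concretely, one can take $\mathcal H_k$ to be $K_{p,p+k}$ with one extra edge added inside $V_1$ (creating a triangle, hence an odd cycle, so the graph is non-bipartite and connected), and then argue that adding that single edge changes $\reg(R/J(-))$ and $v(J(-))$ by the same controlled amount — or, more safely, invoke the known formulas for $\reg$ and $v$ of cover ideals of such "complete bipartite plus a matching" graphs, or a corona/whisker-type construction $G \circ K_1$ where both invariants are well understood. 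The odd cycle $C_{2t+1}$ joined appropriately to a large bipartite block is another candidate: $v(J(C_{2t+1}))$ and $\reg(R/J(C_{2t+1}))$ are computed in Theorem \ref{cycle}, and one can tune the bipartite block's size to adjust the difference.

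The key steps, in order, are: (i) fix the graph family explicitly in each case; (ii) compute $J(\mathcal H_k)$ and, if needed, verify $J(\mathcal H_k)^{(n)} = J(\mathcal H_k)^n$ or otherwise identify $\Ass$ so that Lemma \ref{Conca1}-type or direct computations apply; (iii) compute $v(J(\mathcal H_k))$ by exhibiting an explicit monomial $f$ and a prime $\mfp \in \Ass(J(\mathcal H_k))$ with $\mfp = (J(\mathcal H_k) : f)$ realizing the minimal degree, and proving minimality by the degree estimate used in the proofs of Theorem \ref{combi} and Theorem \ref{complete} (the argument bounding $\sum \alpha_i$ from below); (iv) quote or compute $\reg(S/J(\mathcal H_k))$ from the literature on cover ideals of these specific graph classes; (v) subtract. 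The main obstacle is step (iii)/(iv) for the non-bipartite case: I need a connected non-bipartite family where \emph{both} the $v$-number and the regularity are pinned down exactly, and where the difference can be made to equal an arbitrary prescribed $k$ — bipartite blocks glued to an odd cycle or triangle are flexible enough in principle, but verifying that the gluing does not create spurious associated primes or inflate the regularity unpredictably is the delicate point. A safe fallback is to mimic precisely the bipartite construction of part $(1)$ and then check, edge by edge, that inserting one edge to destroy bipartiteness shifts $\reg$ and $v$ in tandem, preserving the gap $k$.
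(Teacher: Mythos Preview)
Your part~$(1)$ rests on a misreading of Theorem~\ref{combi}(2). That formula says
\[
\reg\big(R/J(K_{p_1,p_2})^n\big)-v\big(J(K_{p_1,p_2})^n\big)=(n-1)(p_2-p_1),
\]
so at $n=1$ the gap is always $0$, regardless of $p_2-p_1$; indeed Theorem~\ref{combi}(2) states explicitly that $v(J)=\reg(R/J)$. Thus $K_{p,p+k}$ does \emph{not} give gap $k$ for its cover ideal, and your ``cleanest route'' collapses. The gap only opens up as the \emph{power} $n$ grows, which is why the paper's mechanism is essential: take $G=K_{m,m+1}$, pass to the $(k{+}1)$-st symbolic power $J(G)^{(k+1)}$ where Theorem~\ref{combi}(4) gives gap exactly $k$, and then \emph{polarize} via Remark~\ref{newgraph} to obtain $J(G_{k+1})=\big(J(G)^{(k+1)}\big)^{\pol}$, which is again the cover ideal of an honest (connected, bipartite) graph. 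Polarization preserves both the $v$-number and the regularity, so the gap $k$ survives. You actually mention this polarization route as an alternative and then discard it; it is in fact the needed idea, not an embellishment.

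For part~$(2)$ your proposal is only a sketch, and the concrete candidate --- adding one edge inside $V_1$ of $K_{p,p+k}$ --- inherits the same problem: the underlying bipartite piece has gap $0$, so you would have to manufacture the entire gap $k$ from a single extra edge, which you give no argument for. The paper again uses polarization: starting from $G=K_3$, one checks that $G_{2k+1}$ is connected and contains the triangle on $x_{1,1},x_{2,1},x_{3,1}$, hence is non-bipartite, and then Theorem~\ref{complete} applied to $J(K_3)^{(2k+1)}$ (odd exponent) yields
\[
\reg\big(R/J(K_3)^{(2k+1)}\big)-v\big(J(K_3)^{(2k+1)}\big)=\big(2(2k+1)-1\big)-\big(\tfrac{3}{2}(2k+1)-\tfrac{1}{2}\big)=k,
\]
which transfers to $J(G_{2k+1})$ by polarization. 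The unifying trick in both parts is the same: convert a symbolic-power gap into a cover-ideal gap via Seyed Fakhari's $G\mapsto G_k$ construction.
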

\begin{proof}
	$(1)$ Let $G=K_{m,m+1}$ be a complete bipartite graph with $V(G)=A\sqcup B$, $|A|=m$ and $|B|=m+1$ for any positive integer $m$. Let $A=\{x_1,\ldots,x_m\}$ and $B=\{y_1,\ldots,y_{m+1}\}$.
	For any integer $k\geq 1$, we show that $G_k$ is  a connected bipartite graph where $G_k$ is the new graph constructed from $G$ as mentioned in Remark \ref{newgraph}. Note that $V(G_k)=A_k\sqcup B_k$ with $A_k=\{x_{i,p}: 1\leq i\leq m, 1\leq p
	\leq k\}$ and $B_k=\{y_{j,q}: 1\leq j\leq m+1, 1\leq q
	\leq k\}$. Consider two distinct vertices $x_{i,p},x_{l,t}\in A_k$ of $G_k$ for some $1\leq i,l\leq m$ and $1\leq p,t\leq k$. Then we have a path $(x_{i,p},y_{1,1}), (x_{l,t},y_{1,1})$ in $G_k$. Similarly for any two distinct vertices $y_{j,q},y_{r,s}\in B_k$ with $1\leq j,r\leq m+1$ and $1\leq q,s\leq k$,  we have a path $(x_{1,1},y_{j,q}), (x_{1,1},y_{r,s})$ in $G_k$. Let $x_{i,p}\in A_k$ and $y_{j,q}\in B_k$ be two vertices in $G_k$ where $1\leq i\leq m$, $1\leq j\leq m+1$ and $1\leq p,q\leq k$. Then we have a path $(x_{i,p},y_{1,1}),(x_{1,1},y_{1,1}),(x_{1,1},y_{j,q})$ in $G_k$.
	
	For any integer $k\geq 1$, consider the graph $\mathcal H_k=G_{k+1}$. Then using  Theorem \ref{combi} $(4)$, \cite[Corollary 1.6.3]{HH}, Remark \ref{newgraph} and \cite[Corollary 2.6]{GRV1}, we have
	\begin{eqnarray*}
		\reg(S/J(\mathcal H_k))-v(J(\mathcal H_k))	&=&\reg(J(\mathcal H_k))-v(J(\mathcal H_k))-1
		\\&=&\reg(J(G_{k+1}))-v(J(G_{k+1}))-1\\&=& \reg((J(G)^{({k+1})})^{\pol})-v((J(G)^{({k+1})})^{\pol})-1\\&=& \reg(J(G)^{({k+1})})-v(J(G)^{({k+1})})-1\\&=&\reg(R/J(G)^{({k+1})})-v(J(G)^{({k+1})})\\&=&\reg(R/J(G)^{{k+1}})-v(J(G)^{{k+1}})=k+1-1=k
	\end{eqnarray*}	where $R$ is the polynomial ring over the field $K$ with $|V(G)|$-many variables.
Note that for a fixed positive integer $k$ and for every positive integer $m$, we consider $G=K_{m,m+1}$ and construct $\mathcal H_k$. Hence for a fixed positive integer $k$, we get infinitely many graphs $\mathcal H_k$. 

$(2)$ Let $G=K_3$ be the complete graph with the vertex set $V(G)=\{x_1,x_2,x_3\}$. Then for every positive integer $k$, $G_k$ is the graph with the vertex set $V(G)=\{x_{i,p}: 1\leq i\leq 3, 1\leq p\leq k\}$ constructed from $G$ as mentioned in Remark \ref{newgraph}. Note that for any two distinct vertices $x_{i,p}$ and $x_{j,q}$ with $1\leq i,j\leq 3$, $1\leq p,q\leq k$, we have path $(x_{i,p},x_{t,1}),(x_{t,1},x_{j,q})$ where $t\in\{1,2,3\}\setminus\{i,j\}$. Hence $G_k$ is connected for all $k\geq 1$.  For all $k\geq 1$, the graph $G_k$ has a $3$-cycle with vertices $x_{1,1}, x_{2,1}, x_{3,1}$ and hence $G_k$ is a non-bipartite graph.

For any integer $k\geq 1$, consider the graph $\mathcal H_k=G_{2k+1}$. Then using Theorem \ref{complete}, \cite[Corollary 1.6.3]{HH} and \cite[Corollary 3.8]{FS},  we have
\begin{eqnarray*}
	\reg(S/J(\mathcal H_k))-v(J(\mathcal H_k))	&=&\reg(J(\mathcal H_k))-v(J(\mathcal H_k))-1
	\\&=&\reg(J(G_{2k+1}))-v(J(G_{2k+1}))-1\\&=& \reg((J(G)^{({2k+1})})^{\pol})-v((J(G)^{({2k+1})})^{\pol})-1\\&=& \reg(J(G)^{({2k+1})})-v(J(G)^{({2k+1})})-1\\&=&\reg(R/J(G)^{({2k+1})})-v(J(G)^{({2k+1})})\\&=&2.(2k+1)-1-(\frac{3}{2}(2k+1)-\frac{1}{2})=k
\end{eqnarray*}	where $R$ is the polynomial ring over the field $K$ with $|V(G)|$-many variables.
\end{proof}	
Recently in \cite{MRK}, it is shown that for a cover ideal $J(G)$ of a graph $G$, $v(J(G)^{(n)})\leq \reg(R/J(G)^{(n)})$ for all $n\geq 1$. Motivated by Theorem \ref{combi} and Theorem \ref{complete}, we pose the following question.
\begin{Question}\label{question}	
	For every  integer $m>2$, does there exist a graph $G_m$ with the cover ideal $J_m$ such that $\reg(R/J_m^{(n)})=v(J_m^{(n)})$ for all $n\leq m$ and $\reg(R/J_m^{(n)})\neq v(J_m^{(n)})$ for all $n>m$?
\end{Question}	
Next we compute the local $v$-numbers of a cycle.		
\begin{Theorem}\label{cycle}
	Let $C$ be a cycle with the vertex set $V(C)=\{1,\ldots,u\}$ and the edge set
	$E(C)=\{(1,2),\ldots,({u-1},u), (u,1)\}$. Let $J$ be the cover ideal of $C$ and $\mathcal I=\{J^{(n)}\}$. Then for any $\mfp\in\overline A(\mathcal I)$ and $n\geq 1$, 
	\[\displaystyle v_{\mfp}(J^{(n)})= \left\{
	\begin{array}{l l}
	~~~~~\displaystyle\frac{|V(C)|}{2}n+c& \quad \text{if $n$ is even }\\ \vspace{0.3mm}\\
	\displaystyle\displaystyle\frac{|V(C)|}{2}n+d& \quad \text{if $n$ is odd }\\ 
	\end{array} \right.\]	where $c=d=0$ if $C$ is an even cycle and $c=0$, $d=-\frac{1}{2}$ if $C$ is an odd cycle. 
\end{Theorem}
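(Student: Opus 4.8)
The plan is to mimic the structure already used in the proofs of Theorem \ref{combi} and Theorem \ref{complete}. Write $P_i = (x_i, x_{i+1})$ for $1 \le i \le u-1$ and $P_u = (x_u, x_1)$, so that $J = \bigcap_{i=1}^u P_i$ and, by \cite[Theorem 3.7]{CEHH}, $J^{(n)} = \bigcap_{i=1}^u P_i^n$ for all $n \ge 1$. By Remark \ref{Noetherian}, $\mathcal I = \{J^{(n)}\}$ is a Noetherian filtration, so by \cite[Lemma 1.2]{Co} each local $v$-number $v_{\mfp}(J^{(n)})$ is computed as $\min\{w : (L_n^{\mfp})_w \neq 0\}$ where $L_n^{\mfp} = (J^{(n)} : \mfp)/J^{(n)}$, and it suffices to treat $\mfp = P_i$ for each edge. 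Fix an edge $P_i = (x_i, x_{i+1})$ (indices mod $u$). A monomial $\prod_{j=1}^u x_j^{\alpha_j}$ represents a nonzero class in $L_n^{P_i}$ precisely when $\alpha_i + \alpha_{i+1} = n-1$ while $\alpha_k + \alpha_{k+1} \ge n$ for every other edge $(k, k+1)$. The goal is then to minimize $\sum_j \alpha_j$ subject to these constraints, and to exhibit a monomial achieving the minimum.

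The key combinatorial step is the minimization. After removing the ``deficient'' edge $P_i$, the remaining constraints form a path $x_{i+1} - x_{i+2} - \cdots - x_u - x_1 - \cdots - x_i$ on all $u$ vertices with $u-1$ edges, each requiring consecutive sums $\ge n$, together with the single equation $\alpha_i + \alpha_{i+1} = n-1$. I would first handle the path: minimizing $\sum \alpha_j$ over a path with $u$ vertices and all consecutive sums $\ge n$ gives an alternating pattern. Concretely, with $\alpha_i + \alpha_{i+1} = n-1$ fixed, set $\alpha_i = a$, $\alpha_{i+1} = n-1-a$; then Lemma \ref{tech}-type reasoning propagates: $\alpha_{i+2} \ge n - \alpha_{i+1} = a+1$, $\alpha_{i+3} \ge n - \alpha_{i+2}$, and so on, alternating along the path, and one optimizes over the free parameter $a \in \{0, \dots, n-1\}$. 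When $u$ is even the path from $x_{i+1}$ around to $x_i$ has odd length $u-1$, the alternation closes up favorably, and the minimum is $\frac{u}{2}n$, attained e.g. by the all-exponents-$=\frac n2$ monomial adjusted by $\pm 1$ on the two endpoints of the deficient edge (split as described; for $n$ even use $\frac n2, \frac n2 - 1$ on the edge and $\frac n2$ elsewhere — need to recheck parities). When $u$ is odd the alternation around the path forces an extra imbalance, producing $\frac{u}{2}n$ for $n$ even and $\frac{u}{2}n - \frac12$ for $n$ odd. Throughout, the value is independent of which edge $P_i$ is chosen by the symmetry of the cycle, which also gives that $v(J^{(n)}) = v_{\mfp}(J^{(n)})$ for all $\mfp \in \overline A(\mathcal I)$.

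The main obstacle is the careful bookkeeping of parities in the chain of inequalities $\alpha_{i+t} \ge n - \alpha_{i+t-1}$: the bound one gets on $\alpha_{i+t}$ depends on $t \bmod 2$ and on $n \bmod 2$, and closing the loop consistently (the constraint at the last vertex $x_i$ adjacent back to the deficient edge) is where the even-cycle versus odd-cycle dichotomy, and the $\lceil \cdot \rceil$ in Lemma \ref{tech}, actually enters. I would organize this as: (i) a lower bound argument summing the propagated inequalities along the path and optimizing over $a$; (ii) an explicit monomial matching the bound; (iii) observing the answer does not depend on the chosen edge. One should double-check the edge cases $n = 1$ (where $J^{(1)} = J$ and $v_{\mfp}(J) = \alpha(\text{minimal vertex cover}) - 1$ matches $\frac{u}{2} - \frac12 = \frac{u-1}{2}$ for odd $u$, and $\frac u2$ for even $u$) and small cycles like $C_3, C_4$ against Theorem \ref{complete} and Theorem \ref{combi} respectively, since $C_3 = K_3$ and $C_4 = K_{2,2}$; in both cases the formula here should agree with those results, providing a useful sanity check.
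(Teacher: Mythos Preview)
Your proposal is correct and follows the same overall architecture as the paper: set up $J^{(n)}=\bigcap P_i^n$, reduce $v_{P_i}(J^{(n)})$ via \cite[Lemma 1.2]{Co} to minimizing $\sum_j\alpha_j$ subject to $\alpha_i+\alpha_{i+1}=n-1$ and $\alpha_k+\alpha_{k+1}\ge n$ on all other edges, then produce a lower bound and an explicit witness monomial, and finish by symmetry of the cycle.

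The one noteworthy difference is in the lower-bound step. You propose propagating the inequalities along the path obtained by deleting the deficient edge and optimizing over the free parameter $a=\alpha_i$. The paper instead, for $u=2m$ even, observes that the $m$ edges $(2,3),(4,5),\ldots,(2m-2,2m-1),(2m,1)$ partition the vertex set and avoid the deficient edge $(1,2)$, giving immediately $\sum_j\alpha_j\ge mn$ with no optimization needed; for $u=2m+1$ odd it uses two overlapping edge-groupings and a short contradiction to rule out $\sum_j\alpha_j<\lfloor un/2\rfloor$. The paper's grouping argument is slicker and avoids the parity bookkeeping you anticipate, while your propagation approach is more systematic and would generalize more readily to other path/cycle-like constraint systems. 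Either works; your sanity checks against $C_3=K_3$ and $C_4=K_{2,2}$ are well chosen.
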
	
\begin{proof} Let $P_i=\langle x_i, x_{i+1}\rangle$ for all $1\leq i\leq u-1$ and $P_u=\langle x_u, x_1\rangle$. Then  by \cite[Theorem 3.7]{CEHH}, for all $n\geq 1$, we have $J^{(n)}=\bigcap\limits_{i=1}^u P_i^n$ and by Remark \ref{Noetherian}, $\mathcal I$ is a Noetherian filtration. For all $n\geq 1$, we have 
	$L_n:=\displaystyle \frac{J^{(n)}:P_1}{J^{(n)}}=\frac{P_1^{n-1}\cap(\bigcap\limits_{i=2}^u P_i^n)}{\bigcap\limits_{i=1}^u P_i^n}.$
	Let $\prod\limits_{i=1}^{u}x_i^{\alpha_i}+J^{(n)}$ be a nonzero element in $L_{n}$. Then
	\begin{enumerate}
		\item $\alpha_i+\alpha_{i+1}\geq n$ for all $2\leq i\leq u-1$ and $\alpha_1+\alpha_u\geq n$,
		\item $\alpha_1+\alpha_2=n-1$.
	\end{enumerate}	 
	We divide the proof in two cases. 
	
	\vspace{3mm}
	{\underline{Case 1:}}	We consider $u=2m$ for some $m\in\NN_{>0}$. 
	First we show $(L_{n})_{mn}\neq 0$ and $(L_{n})_{s}=0$ for all $s<mn$. Let $\prod\limits_{i=1}^{u}x_i^{\alpha_i}+J^{(n)}$ be a nonzero element in $L_{n}$. Then 
	$$\sum\limits_{i=1}^{u}\alpha_i=\sum\limits_{i=1}^{m-1}(\alpha_{2i}+\alpha_{2i+1})+(\alpha_1+\alpha _{2m})\geq (m-1)n+n=mn.$$ Hence $(L_{n})_{s}=0$ for all $s<mn$. Note that if $n=2k$ for some $k\in\NN_{>0}$ then 
	
		$x_1^{k}x_u^{k}(\prod\limits_{i=1}^{m-1} x_{2i}^{k-1}x_{2i+1}^{k+1})+J^{(2k)}$ is a nonzero element in $(L_{n})_{mn}$ and if $n=2k+1$ for some $k\in\NN$ then $x_1^{k}x_u^{k+1}(\prod\limits_{i=1}^{m-1} x_{2i}^{k}x_{2i+1}^{k+1})+J^{(2k+1)}$ is a nonzero element in $(L_{n})_{mn}$. 
	
	Therefore for all $n\geq 1$,  by  \cite[Lemma 1.2]{Co}, we have $ v_{P_1}({J^{(n)}})=mn=\frac{u}{2}n$. Similar calculation shows that $v_{P}({J^{(n)}})=mn=\frac{u}{2}n$ for all $n\geq 1$ and $P\in \overline A(\mathcal I)$. Hence $v(J^{(n)})= \frac{u}{2}n$ for all $n\geq 1$.
		
\vspace{3mm}
	{\underline{Case 2:}}	We consider $u=2m+1$ for some $m\in\NN_{>0}$. 
	
	First we show $(L_{2n})_{n(2m+1)}\neq 0$ and $(L_{2n})_{s}=0$ for all $s<n(2m+1)$ and  positive integer $n$. Let $\prod\limits_{i=1}^{u}x_i^{\alpha_i}+J^{(2n)}$ be a nonzero element in $(L_{2n})_s$. Suppose  $s<n(2m+1)$. Then $\sum\limits_{i=1}^u\alpha_i=s<n(2m+1)$  implies $\sum\limits_{i=3}^u\alpha_i=s-(\alpha_1+\alpha_2)=s-(2n-1)<2mn-n+1$. Hence $2n(m-1)+\alpha _{u}\leq \sum\limits_{i=1}^{m-1}(\alpha_{2i+1}+\alpha_{2i+2})+\alpha _{u}<2mn-n+1$. Hence $\alpha _{u}\leq n$.
	
	On the other hand, $ \alpha _{1}+2mn\leq \alpha _{1}+\sum\limits_{i=1}^{m}(\alpha_{2i}+\alpha_{2i+1})=s<n(2m+1)$. Thus $\alpha_1< n$. This contradicts that $\alpha_1+\alpha_u\geq 2n$. Hence $(L_{2n})_{s}=0$ for all $s<n(2m+1)$. Note that $x_1^n(\prod\limits_{i=1}^{m} x_{2i}^{n-1}x_{2i+1}^{n+1})+J^{(2n)}$ is a nonzero element in $(L_{2n})_{n(2m+1)}$.
	
	Now we show $(L_{2n+1})_{n(2m+1)+m}\neq 0$ and $(L_{2n+1})_{s}=0$ for all $s<n(2m+1)+m$ and nonnegative integer $n$. Let $\prod\limits_{i=1}^{u}x_i^{\alpha_i}+J^{(2n+1)}$ be a nonzero element in $(L_{2n+1})_s$. Suppose $s<n(2m+1)+m$. Then $\sum\limits_{i=1}^u\alpha_i=s<n(2m+1)+m$ implies $\sum\limits_{i=3}^u\alpha_i=s-(\alpha_1+\alpha_2)=s-2n<2mn-n+m$. Hence $(2n+1)(m-1)+\alpha _{u}\leq \sum\limits_{i=1}^{m-1}(\alpha_{2i+1}+\alpha_{2i+2})+\alpha _{u}<2mn-n+m$ and  $\alpha _{u}< n+1$. On the other hand, $ \alpha _{1}+m(2n+1)\leq \alpha _{1}+\sum\limits_{i=1}^{m}(\alpha_{2i}+\alpha_{2i+1})=s<n(2m+1)+m.$ Thus $\alpha_1< n$. This contradicts that $\alpha_1+\alpha_u\geq 2n+1$. Hence $(L_{2n+1})_{s}=0$ for all $s<n(2m+1)+m$.  Note that $x_1^n(\prod\limits_{i=1}^{m} x_{2i}^{n}x_{2i+1}^{n+1})+J^{(2n+1)}$ is a nonzero element in $(L_{2n+1})_{n(2m+1)+m}$. Therefore,  by  \cite[Lemma 1.2]{Co}, we get
	
	\[\displaystyle v_{P_1}({J^{(n)}})= \left\{
	\begin{array}{l l}
	~~~~~\displaystyle \frac{n(2m+1)}{2}=\frac{u}{2}n& \quad \text{if $n$ is even }\\ \vspace{0.3mm}\\
	\displaystyle\frac{(n-1)(2m+1)}{2}+m=\frac{u}{2}n-\frac{1}{2}& \quad \text{if $n$ is odd. }\\ 
	\end{array} \right.\] 
	Similar calculation shows that $v_{P}({J^{(n)}})=v_{P_1}({J^{(n)}})$ for all $n\geq 1$ and $P\in \overline A(\mathcal I)$. Hence $v(J^{(n)})= v_{P_1}({J^{(n)}})$ for all $n\geq 1$. 
\end{proof}	
%\begin{Remark}\label{cyclereg}{\em
%		 \begin{enumerate}
%		\item[$(1)$]  If $C$ is an odd cycle of length $u$ then, by \cite[Theorem 3.7]{Fa} and Theorem \ref{cycle}, for all $n\geq 1$, we have
%	$v(J^{(n)})\leq\frac{(u+1)}{2}n-1\leq n\Deg(J)-1\leq  \reg (J^{(n)})-1=\reg(R/J^{(n)}).$
%	\item[$(2)$]  Suppose $C$ is an even cycle of length $u$. If $u> 4$ then $\Deg(J)\geq \frac{u}{2}+1$. Hence by \cite[Theorem 3.7]{Fa} and Theorem \ref{cycle}, for all $n\geq 1$, we have
%	 $$v(J^{(n)})=n\frac{u}{2}\leq n(\frac{u}{2}+1)-1
%	\leq n\Deg(J)-1\leq\reg (J^{(n)})-1=\reg(R/J^{(n)}).$$
%	Suppose $u=4$. Then $J=(x_1x_3,x_2x_4)$. Note that $J$ is an edge ideal of a forest with vertex set $\{1,2,3,4\}$ and the edge set $\{ (1,3),(2,4)\}$. Since forest is a bipartite graph, by \cite[Theorem 4.7]{KHT}, we have $\reg(J^{(n)})=\reg(J^{n})=2n+2-1=2n+1.$ Thus by \cite[Theorem 3.7]{Fa} and Theorem \ref{cycle}, for all $n\geq 1$, we have
%	$v(J^{(n)})=2n=\reg(J^{(n)})-1=\reg(R/J^{(n)}).$
%	\end{enumerate}}
%\end{Remark}	
%We conclude this section by providing a counter-example to a conjecture   \cite[Conjecture 5.4]{FSmarch} due to Ficarra and Sgroi. 
\vspace{-1cm}
A vertex of a graph $G$ is called a pendant vertex if it is an endpoint of exactly one edge of $G$ and an edge of $G$ is called a pendant edge if one of its endpoints is a pendant vertex. For integers $m\geq 2$ and $s\geq 1$, by $K_m^s$ we denote a graph with $m(s+1)$ vertices which is obtained from a complete graph $K_m$ with $s$ pendant edges at each vertex of $K_m$.  
\\We denote the vertices and edges of $K_m^s$ as follows:
\begin{enumerate}
	\item $V(K_m^s)=V(K_m)\cup V$ where $V(K_m)=\{1,\ldots,m\}$ is the set of all vertices of $K_m$ and $V=\{j1,\ldots,js: 1\leq j\leq m\}$ is the set of all pendant vertices.
		\item $E(K_m^s)=\{(i,j),(j,jl): 1\leq i,j\leq m, i<j, 1\leq l\leq s\}$.
\end{enumerate}
%\vspace{-5mm}
{
\begin{figure}[H]
	\centering
	\includegraphics[width=0.55\textwidth]{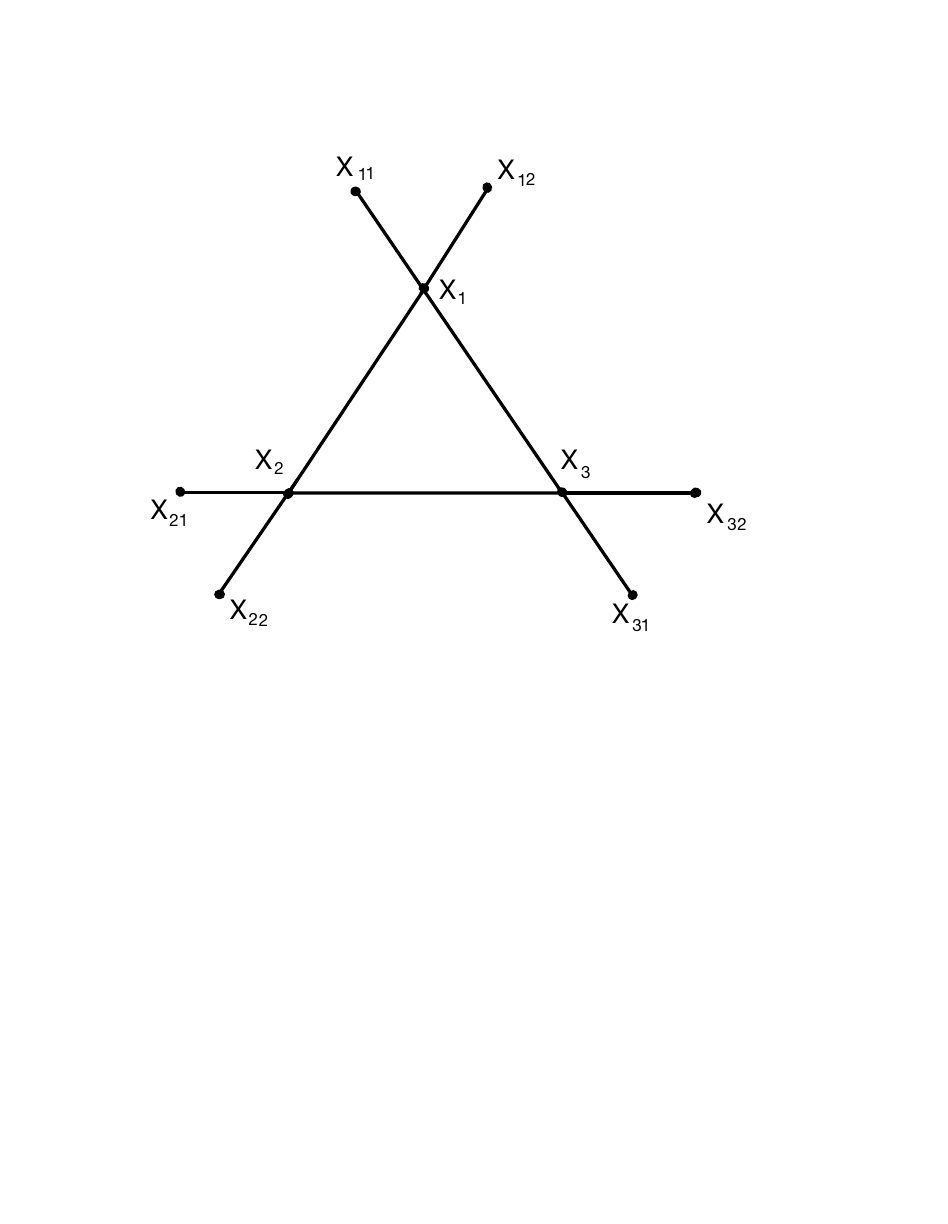}
	\vspace{-5cm}\\
	\hspace{-4cm}	\caption{$K_3^2$}	
\end{figure}
}
%\vspace{-8mm}

	\begin{Theorem}\label{corona}
	Let $J$ be the cover ideal of 	$K_m^s$ and $\mathcal I=\{J^{(n)}\}$. Consider the prime ideals  $P_{j,l}=(x_j,x_{jl})$  and $Q_{i,j}=(x_i,x_j)$ with $1\leq i,j\leq m$, $i<j$ and $1\leq l\leq s$. Then  for all $n\geq 1$, $1\leq i,j\leq m$, $i<j$ and $1\leq l\leq s$,
	\begin{enumerate}
	\item[$(1)$] $v_{P_{j,l}}({J^{(n)}})=mn+s-2$, 	
	\item[$(2)$] $v_{Q_{i,j}}({J^{(n)}})=(m+s-1)n+s-1$,
	\item[$(3)$] $v(J^{(n)})=mn+s-2$.
			\end{enumerate}		
\end{Theorem}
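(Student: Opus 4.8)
The plan is to mimic the strategy already used in Theorem \ref{combi}, Theorem \ref{complete} and Theorem \ref{cycle}: realize $J^{(n)}$ as an intersection of powers of the edge primes of $K_m^s$, and for each associated prime compute the local $v$-number as the least degree of a monomial producing a nonzero class in the appropriate Conca quotient. First I would record that, by \cite[Theorem 3.7]{CEHH},
$$J^{(n)}=\bigcap_{1\le i<j\le m}Q_{i,j}^{\,n}\cap\bigcap_{\substack{1\le j\le m\\ 1\le l\le s}}P_{j,l}^{\,n}\quad\text{for all }n\ge1,$$
that $\mathcal I$ is Noetherian by Remark \ref{Noetherian}, and that the edge primes of $K_m^s$ are pairwise incomparable, so that the Conca quotient for $\mfp\in\overline A(\mathcal I)$ is simply $\big(J^{(n)}:\mfp\big)/J^{(n)}$. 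The computational heart is the following elementary reduction of the colon ideal: for an edge prime $P_0=(x_a,x_b)$ and a monomial $u=\prod_v x_v^{\alpha_v}$, one has $u\,P_0\subseteq P^{\,n}$ for \emph{every} edge prime $P$ of $K_m^s$ if and only if $\alpha_a+\alpha_b\ge n-1$ and $\alpha_p+\alpha_q\ge n$ for every other edge $(p,q)$; hence $u$ gives a nonzero class in $\big(J^{(n)}:P_0\big)/J^{(n)}$ exactly when $\alpha_a+\alpha_b=n-1$ and $\alpha_p+\alpha_q\ge n$ for every edge $(p,q)\ne(a,b)$. Here one uses only that $u$ lies in a monomial power $(x_p,x_q)^n$ iff $\alpha_p+\alpha_q\ge n$; the fact that each pendant vertex lies in a unique edge prime is what makes the ``cross-over'' inequalities collapse to this clean form. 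This turns each local $v$-number into a small integer linear program in the exponents.

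For part $(1)$, take $P_0=P_{j,l}=(x_j,x_{jl})$. The constraints read $\alpha_j+\alpha_{jl}=n-1$, together with $\alpha_j+\alpha_v\ge n$ for every neighbour $v$ of $j$ (the $m-1$ other vertices of $K_m$ and the $s-1$ other pendants of $j$), and $\alpha_u+\alpha_v\ge n$ for every edge not meeting $j$. For the lower bound I would partition $\sum_v\alpha_v$ into the block consisting of $j$ and its $s$ pendants, and, for each $k\in V(K_m)\setminus\{j\}$, the block consisting of $k$ and its $s$ pendants. Pairing $k$ with any one of its pendant edges shows each of the latter $m-1$ blocks has weight $\ge n$, while, writing $\alpha_j=t$ (so $\alpha_{jl}=n-1-t$), the $j$-block has weight $\ge(n-1)+(s-1)$ since each of the remaining $s-1$ pendants of $j$ has exponent $\ge n-t\ge1$. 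Adding up gives $v_{P_{j,l}}(J^{(n)})\ge mn+s-2$, and equality is witnessed by the monomial $x_j^{\,n-1}\big(\prod_{l'\ne l}x_{jl'}\big)\big(\prod_{k\in V(K_m)\setminus\{j\}}x_k^{\,n}\big)$, which one checks satisfies all the inequalities above with degree exactly $mn+s-2$.

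For part $(2)$, take $P_0=Q_{i,j}=(x_i,x_j)$. Now the constraints are $\alpha_i+\alpha_j=n-1$, together with $\alpha_i+\alpha_v\ge n$ and $\alpha_j+\alpha_v\ge n$ for $v\in V(K_m)\setminus\{i,j\}$, the clique inequalities among $V(K_m)\setminus\{i,j\}$, and the pendant inequalities at every vertex (in particular at $i$ and at $j$). Writing $\alpha_i=a$ with $0\le a\le n-1$ and $\alpha_j=n-1-a$, each pendant of $i$ has exponent $\ge n-a$ and each pendant of $j$ has exponent $\ge a+1$, so the combined block of $\{i,j\}$ together with all their pendants has weight $\ge[a+s(n-a)]+[(n-1-a)+s(a+1)]=(s+1)n+s-1$, independent of $a$; each of the remaining $m-2$ vertices contributes (with its pendants) weight $\ge n$ as before. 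Hence $v_{Q_{i,j}}(J^{(n)})\ge(m+s-1)n+s-1$, with equality witnessed by $x_i^{\,n-1}\big(\prod_{l=1}^s x_{il}\big)\big(\prod_{l=1}^s x_{jl}^{\,n}\big)\big(\prod_{k\in V(K_m)\setminus\{i,j\}}x_k^{\,n}\big)$.

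Part $(3)$ is then immediate: by \cite[Lemma 1.2]{Co}, $v(J^{(n)})=\min\{v_\mfp(J^{(n)}):\mfp\in\overline A(\mathcal I)\}$, and since $(m+s-1)n+s-1-(mn+s-2)=(s-1)n+1>0$ for $m\ge2$, $n\ge1$, $s\ge1$, the minimum is attained at the pendant primes and equals $mn+s-2$. I expect the real work to lie in making the block decompositions of $(1)$ and $(2)$ genuinely tight — verifying that every exponent is counted exactly once, that each block bound is achieved, and in particular that the $\{i,j\}$-block bound in $(2)$ is independent of how $n-1$ is split between $\alpha_i$ and $\alpha_j$ — and in pinning down the colon-ideal reduction in the first paragraph, whose validity rests squarely on each pendant vertex belonging to a single edge prime; once these are settled, the explicit witnessing monomials finish the proof.
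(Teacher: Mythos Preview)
Your proposal is correct and follows essentially the same approach as the paper: reduce $J^{(n)}$ to an intersection of edge-prime powers via \cite[Theorem 3.7]{CEHH}, characterize nonzero monomial classes in $(J^{(n)}:\mfp)/J^{(n)}$ by the exponent constraints, bound the total degree from below by a block decomposition, and exhibit an explicit witness. Your witness for part $(2)$, with $\alpha_i=n-1$ and $\alpha_j=0$, is actually cleaner than the paper's, which splits unnecessarily on the parity of $n$; also, your colon-ideal reduction in the first paragraph holds for the cover ideal of any simple graph (since for any edge $(p,q)\ne(a,b)$ at most one of $a,b$ lies in $\{p,q\}$), so the pendant structure plays no special role there.
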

\begin{proof}
Note that,  by \cite[Theorem 3.7]{CEHH}, $J^{(n)}=(\bigcap\limits_{1\leq i<j\leq m}Q_{i,j}^n)\cap(\bigcap\limits_{1\leq i\leq m}(P_{i,1}^n\cap \cdots\cap P_{i,s}^n))$ and  by Remark \ref{Noetherian}, $\{J^{(n)}\}$ is a Noetherian filtration.	

For any $n\geq 1$, let $$\displaystyle L_n^{1,1}:=\frac{J^{(n)}:P_{1,1}}{J^{(n)}}=\frac{(\bigcap\limits_{1\leq i<j\leq m}Q_{i,j}^n)\cap P_{1,1}^{n-1}\cap P_{1,2}^n\cap\cdots\cap P_{1,s}^n\cap(\bigcap\limits_{2\leq i\leq m} (P_{i,1}^{n}\cap\cdots\cap P_{i,s}^n))}{(\bigcap\limits_{1\leq i<j\leq m}Q_{i,j}^n)\cap(\bigcap\limits_{1\leq i\leq m}(P_{i,1}^n\cap \cdots\cap P_{i,s}^n))}.$$
Let $\prod\limits_{1\leq i\leq m}x_i^{\alpha_i}\prod\limits_{1\leq i\leq m} x_{i1}^{\alpha_{i1}}\cdots x_{is}^{\alpha_{is}}+J^{(n)}$ be a nonzero element in 	$L_n^{1,1}$. Then
\begin{itemize}
	\item[$(1)$]	$\alpha_1+\alpha_{11}=n-1$,
	\item[$(2)$] $\alpha_1+\alpha_{1l}\geq n$ for all $2\leq l\leq s$,
	\item[$(3)$] $\alpha_j+\alpha_{jl}\geq n$ for $j\in\{2,\ldots,m\}$ and $l\in\{1,\ldots, s\}$.
	\item[$(4)$] $\alpha_i+\alpha_j\geq n$ for $i< j$ and $i,j\in\{1,\ldots,m\}$.
\end{itemize}
From $(1)$ and $(2)$, we get $\alpha_{1l}\geq 1$ for all $2\leq l\leq s$.	
Thus 
\begin{eqnarray*}
	&&\sum\limits_{i=1}^m\alpha_i+	\sum\limits_{i=1}^m(\alpha_{i1}+\cdots+\alpha_{is})\\&=& (\alpha_1+\alpha_{11})+\sum\limits_{l=2}^s\alpha_{1l}+ \sum\limits_{i=2}^m(\alpha_i+\alpha_{i1})+\sum\limits_{i=2}^m(\alpha_{i2}+\cdots+\alpha_{is})\\&\geq& n-1+(s-1)+(m-1)n+	\sum\limits_{i=2}^m(\alpha_{i2}+\cdots+\alpha_{is})\\&\geq& mn+s-2.
\end{eqnarray*} 	

Note that $x_1^{n-1}x_{12}\cdots x_{1s}\prod\limits_{j=2}^mx_j^n+J^{(n)}$ is a non zero element in $L_n^{1,1}$. Therefore,  by  \cite[Lemma 1.2]{Co}, we get $v_{P_{1,1}}(J^{(n)})=mn+s-2$ for all $n\geq 1$. Similar argument shows $v_{P_{i,l}}(J^{(n)})=mn+s-2$ for all $n\geq 1$, $i\in\{1,\ldots,m\}$ and $l\in\{1,\ldots,s\}$.

For any $n\geq 1$ , let  $$F_n^{1,2}:=\frac{J^{(n)}:Q_{1,2}}{J^{(n)}}=\displaystyle\frac{Q_{1,2}^{n-1}\cap \bigcap\limits_{\substack{1\leq i<j\leq m\\(i,j)\neq (1,2)}} Q_{i,j}^n\cap(\bigcap\limits_{1\leq i\leq m}(P_{i,1}^n\cap \cdots\cap P_{i,s}^n))}{(\bigcap\limits_{1\leq i<j\leq m}Q_{i,j}^n)\cap(\bigcap\limits_{1\leq i\leq m}(P_{i,1}^n\cap \cdots\cap P_{i,s}^n))}.$$

Let $\prod\limits_{1\leq i\leq m}x_i^{\alpha_i}\prod\limits_{1\leq i\leq m} x_{i1}^{\alpha_{i1}}\cdots x_{is}^{\alpha_{is}}+J^{(n)}$ be a nonzero element in 	$F_n^{1,2}$. Then
\begin{itemize}
	\item[$(1)$]	$\alpha_1+\alpha_{2}=n-1$,
	\item[$(2)$] $\alpha_i+\alpha_{j}\geq n$ for all $i,j\in\{1,\ldots,m\}$, $i<j$ and $(i,j)\neq (1,2)$,
	\item[$(3)$] $\alpha_j+\alpha_{jl}\geq n$ for $j\in\{1,\ldots,m\}$ and $l\in\{1,\ldots,s\}$.	
\end{itemize}
From $(1)$ and $(3)$, we get 
$s(n-1)+\sum\limits_{l=1}^s(\alpha_{1l}+\alpha_{2l})=\sum\limits_{i=1}^2\sum\limits_{l=1}^s(\alpha_{i}+\alpha_{il})\geq 2sn.$
 Hence $\sum\limits_{i=1}^2\sum\limits_{l=1}^s\alpha_{il}\geq sn+s$.	
Then 
\begin{eqnarray*}
&&\sum\limits_{i=1}^m\alpha_i+	\sum\limits_{i=1}^m(\alpha_{i1}+\cdots+\alpha_{is})\\&=& (\alpha_1+\alpha_2)+\sum\limits_{i=1}^2\sum\limits_{l=1}^s\alpha_{il}+\sum\limits_{j=3}^m(\alpha_j+\alpha_{j1})+\sum\limits_{j=3}^m\sum\limits_{l=2}^s\alpha_{jl}
\\&\geq& n-1+sn+s+(m-2)n+\sum\limits_{j=3}^m\sum\limits_{l=2}^s\alpha_{jl}
\\&\geq& (m+s-1)n+s-1.
\end{eqnarray*}

{\underline{Case 1:}} Let $n=2k$ for some positive integer $k$. Note that $x_1^kx_2^{k-1}\prod\limits_{1\leq l\leq s} x_{1l}^{k}\prod\limits_{1\leq l\leq s} x_{2l}^{k+1}\prod\limits_{j=3}^m x_{j}^{2k}+J^{(n)}$  is a non zero element in $F_n^{1,2}$.

{\underline{Case 2:}} Let $n=2k+1$ for some nonnegative integer $k$. Note that

$x_1^kx_2^{k}\prod\limits_{1\leq l\leq s} x_{1l}^{k+1}\prod\limits_{1\leq l\leq s} x_{2l}^{k+1}\prod\limits_{j=3}^m x_{j}^{2k+1}+J^{(n)}$  is a non zero element in $F_n^{1,2}$.

Therefore,  by  \cite[Lemma 1.2]{Co}, we have $v_{Q_{1,2}}(J^{(n)})=n(m+s-1)+s-1$ for all $n\geq 1$. A similar argument shows $v_{Q_{i,j}}(J^{(n)})=n(m+s-1)+s-1$ for all $n\geq 1$ and $1\leq i<j\leq m$. 
\end{proof}
As a consequence of the above result we obtain that for Noetherian filtrations the difference between the leading coefficients of local $v$-numbers, i.e., $\displaystyle\lim\limits_{n\to \infty}\frac{v_Q(I_n)}{n}- \lim\limits_{n\to \infty}\frac{v_P(I_n)}{n}$ can be arbitrary large and this helps us to provide a counterexample to \cite[Conjecture 5.4]{FSmarch}.
\begin{Corollary}\label{counterexample}
	For every positive integer $t$,
	\begin{enumerate}
		\item[$(1)$] 	there exists a Noetherian filtration $\{I_n\}$ in a Noetherian $\mathbb  N$-graded domain such that $\Ass(I_n)=\Ass(I_{n+1})$ for all $n\gg0$ and there exist two primes $P,Q$ which are maximal in $\Ass(I_n)$ for all $n\gg0$ such that $\displaystyle\lim\limits_{n\to \infty}\frac{v_Q(I_n)}{n}- \lim\limits_{n\to \infty}\frac{v_P(I_n)}{n}=t$.	
		\item[$(2)$]	there exists a homogeneous ideal $I_t$ in a Noetherian $\mathbb  N$-graded domain and there exist two primes $P,Q$ which are maximal in $\overline A(I_t)$  such that $\displaystyle\lim\limits_{n\to \infty}\frac{v_P(I_t^n)}{n}\neq \lim\limits_{n\to \infty}\frac{v_Q(I_t^n)}{n}$.
	\end{enumerate}			
\end{Corollary}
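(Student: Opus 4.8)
The plan is to derive both parts from the explicit computation of local $v$-numbers of the cover ideal of $K_m^s$ in Theorem \ref{corona} together with the polarization machinery of Remark \ref{newgraph}. For part $(1)$, fix a positive integer $t$. Take $G = K_m^s$ with $s = t+1$ (and $m \geq 2$ arbitrary, say $m=2$) and consider the Noetherian filtration $\mathcal I = \{J^{(n)}\}$ where $J = J(K_m^s)$; by Remark \ref{Noetherian} this is a Noetherian filtration of homogeneous ideals in the polynomial ring $R$ over $K$, and by the discussion in \eqref{asym} of Section \ref{section3} we have $\Ass(R/J^{(n)}) = \Ass(R/J^{(n+1)})$ for all $n \gg 0$. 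Now take $P = P_{j,l} = (x_j, x_{jl})$ (a prime associated to a pendant edge) and $Q = Q_{i,j} = (x_i, x_j)$ (a prime from an edge of $K_m$). Both are minimal primes of $J$, hence maximal elements of $\Ass(R/J^{(n)})$ for all $n$ (for a cover ideal all associated primes of the symbolic powers are minimal, being of the form $(x_a,x_b)$ or $(x_a,x_c)$ with the same height pattern). By Theorem \ref{corona}, $v_{P}(J^{(n)}) = mn + s - 2$ and $v_{Q}(J^{(n)}) = (m+s-1)n + s - 1$, so
\[
\lim_{n\to\infty}\frac{v_Q(J^{(n)})}{n} - \lim_{n\to\infty}\frac{v_P(J^{(n)})}{n} = (m+s-1) - m = s - 1 = t.
\]
This proves $(1)$, and in passing exhibits the promised counterexample to \cite[Conjecture 5.4]{FSmarch}, since that conjecture would force these two leading coefficients to agree whenever $P$ and $Q$ are both maximal in $\overline A(\mathcal I)$.

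For part $(2)$ the idea is to replace the symbolic power filtration by an ordinary power filtration of a single ideal, using polarization to pass from $\{J(G)^{(n)}\}$ to $\{J(G_n)\}$. Fix $t$ and again set $G = K_m^s$ with $s = t+1$. One subtlety: the polarizations $(J(G)^{(n)})^{\pol} = J(G_n)$ live in polynomial rings whose number of variables grows with $n$, so they do not directly form a filtration of powers of a fixed ideal. To get a genuine ideal $I_t$ whose ordinary powers realize the required behaviour, I would instead invoke Theorem \ref{main}$(3)$ in the form that guarantees $\lim_{n} v_{\mfp}(J^{(n)})/n$ exists for each associated prime, combined with the identity $\svd(\{J^{(n)}\}) = 2$ from \cite[Theorem 5.1]{HHT}: then $J^{(2n)} = (J^{(2)})^n$ for all $n$, so with $I_t := J(K_m^s)^{(2)}$ we have $I_t^n = J^{(2n)}$, an ordinary power filtration. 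The associated primes $P$ and $Q$ above remain maximal in $\overline A(I_t) = \Ass(R/I_t^n)$ for $n \gg 0$, and
\[
\lim_{n\to\infty}\frac{v_P(I_t^n)}{n} = \lim_{n\to\infty}\frac{v_P(J^{(2n)})}{n} = 2m + 2s - 4 = \lim_{n\to\infty}\frac{2 v_P(J^{(n)})}{n},
\]
while the analogous computation for $Q$ gives $2(m+s-1)$; since $s = t+1 \geq 2$ these are unequal, which is exactly the assertion of $(2)$.

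The main obstacle I anticipate is the bookkeeping in part $(2)$: one must make sure that the two chosen primes genuinely remain maximal (not just present) in $\Ass(R/I_t^n)$ for all large $n$ — this needs the fact that in a cover ideal's symbolic (hence, via $\svd = 2$, ordinary-power-of-$J^{(2)}$) filtration every associated prime is a monomial prime generated by two variables, so the containment order among associated primes is trivial and every associated prime is maximal. A cleaner alternative, avoiding the $\svd$ trick entirely, is to take $I_t := J(G_2)$ where $G_2$ is the Fakhari double of $G = K_m^{t+1}$: then $I_t^n = J(G_2)^n$ genuinely, and by Remark \ref{newgraph} together with \cite[Theorem 1.1]{Co} applied to the two edge-primes of $G_2$ that come from a pendant edge versus a $K_m$-edge, the two limits $\lim_n v_P(I_t^n)/n$ and $\lim_n v_Q(I_t^n)/n$ equal the corresponding $\alpha$-values of those primes localized appropriately, and these differ. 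I would pick whichever of these two routes keeps the maximality verification shortest; both reduce the corollary to the single explicit computation already carried out in Theorem \ref{corona}, so beyond that verification the argument is routine.
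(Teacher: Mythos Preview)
Your approach is essentially the same as the paper's: both parts are reduced to the explicit local $v$-numbers of $J(K_m^s)$ computed in Theorem~\ref{corona}, with part~(2) using the identity $J^{(2n)}=(J^{(2)})^n$ from \cite[Theorem 5.1]{HHT}. Your observation that all associated primes of $J^{(n)}$ are height-two monomial primes, hence pairwise incomparable and therefore all maximal, is exactly the right justification for the maximality claim (the paper asserts this without comment).

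Two small remarks. First, in your displayed computation for part~(2) the value $2m+2s-4$ is an arithmetic slip: since $v_P(J^{(2n)})=2mn+s-2$, the limit $\lim_n v_P(J^{(2n)})/n$ equals $2m$, and likewise $\lim_n v_Q(J^{(2n)})/n=2(m+s-1)$; these differ because $s=t+1\geq 2$, so your conclusion stands. Second, the paper chooses $I_t=J(K_m^s)^{(2t)}$ rather than your $I_t=J(K_m^{t+1})^{(2)}$; both choices work, and yours is arguably simpler since part~(2) only asks for inequality, not a prescribed gap. Your speculative alternative via $J(G_2)$ is unnecessary---the $\svd=2$ route you outline first already suffices and is what the paper does.
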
	
\begin{proof}
$(1)$ Let $R=K[x_1,\ldots,x_{m(t+2)}]$ with $m\geq 2$, $t\geq 1$ and $J$ be the cover ideal of $K_m^{t+1}$. By Remark \ref{Noetherian}, $\{I_n=J^{(n)}\}$ is a Noetherian filtration. 

Let $P=P_{1,1}$ and $Q=Q_{1,2}$ where $P_{1,1}$ and $Q_{1,2}$
	are as in Theorem \ref{corona}. Then
$$\lim\limits_{n\to \infty}\frac{v_Q(I_n)}{n}- \lim\limits_{n\to \infty}\frac{v_P(I_n)}{n}=m+(t+1)-1-m=t.$$

$(2)$ Let $R=K[x_1,\ldots,x_{m(s+1)}]$ with $m\geq 2$, $s\geq 2$ and $J$ be the cover ideal of $K_m^s$. By Remark \ref{Noetherian}, $\{J^{(n)}\}$ is a Noetherian filtration. By \cite[Theorem 5.1]{HHT}, $J^{(2n)}=(J^{(2)})^n$ for all $n\geq 1$. 

For any positive integer $t$, take $I_t=J^{(2t)}$. Note that $I_t$ has no embedded primes. Let $P=P_{1,1}$ and $Q=Q_{1,2}$ where $P_{1,1}$ and $Q_{1,2}$
are as Theorem \ref{corona}.

Then by Theorem \ref{corona}, we get,
$$\lim\limits_{n\to \infty}\frac{v_P(I_t^n)}{n}=\lim\limits_{n\to \infty}\frac{v_P((J^{(2t)})^n)}{n}=\lim\limits_{n\to \infty}\frac{v_P(J^{(2tn)})}{n}=2tm\mbox{ and }$$
$$\lim\limits_{n\to \infty}\frac{v_Q(I_t^n)}{n}=\lim\limits_{n\to \infty}\frac{v_Q((J^{(2t)})^n)}{n}=\lim\limits_{n\to \infty}\frac{v_Q(J^{(2tn)})}{n}=2t(m+s-1).$$
\end{proof}		
 
\subsection*{Acknowledgements.}
The authors would like to thank Kamalesh Saha for his helpful comments. 
%and the proof of Remark \ref{cyclereg} $(2)$. 

	\end{document}